\definecolor{webgreen}{rgb}{0,.5,0}
\definecolor{webbrown}{rgb}{.8,0,0}
\definecolor{emphcolor}{rgb}{0.95,0.95,0.95}
\ifpdf \hypersetup{pdftex,
            pdfstartview=FitH, 
            bookmarksopen=true,
            bookmarksnumbered=true
} \else \hypersetup{dvips} \fi
\newcommand {\B}{\mathcal{B}}
\numberwithin{equation}{section}
\newtheorem{theorem}{Theorem}[section]
\newtheorem{proposition}{Proposition}[section]
\newtheorem{remark}{Remark}[section]
\newtheorem{lemma}{Lemma}[section]
\newtheorem{assump}{Assumption}[section]
\numberwithin{remark}{section} \numberwithin{proposition}{section}
\numberwithin{corollary}{section}
\newcommand {\R}{\mathbb{R}}
\newcommand {\p}{\mathbb{P}}
\newcommand {\E}{\mathbb{E}}
\newcommand{\diff}{{\rm d}}
\newcommand{\lev}{L\'{e}vy }
\newcommand{\e}{\mathbb{E}}
\begin{document}
\title[Optimal Periodic replenishment policies]{Optimal periodic replenishment policies for spectrally positive L\'evy demand processes} 
\thanks{This version: \today.   }

\author[J. L. P\'erez]{Jos\'e-Luis P\'erez$^\dag$}

\thanks{$\dagger$\, Department of Probability and Statistics, Centro de Investigaci\'on en Matem\'aticas A.C. Calle Jalisco s/n. C.P. 36240, Guanajuato, Mexico.   }
\author[K. Yamazaki]{Kazutoshi Yamazaki$^\ddagger$}
\thanks{$\ddagger$\, Department of Mathematics,
Faculty of Engineering Science, Kansai University, 3-3-35 Yamate-cho, Suita-shi, Osaka 564-8680, Japan.   }

\author[A. Bensoussan]{Alain Bensoussan$^*$}
\thanks{$*$\, Naveen Jindal School of Management, University of Texas at Dallas, 800 W Campbell Road,
Richardson, TX 75080, USA}
\thanks{The authors thank the anonymous referees and associate editor for careful reading of
the paper and constructive comments and suggestions. This paper was supported  by MEXT KAKENHI grant no.
17K05377, 19H01791 and 20K03758. }
\date{}

\begin{abstract} 
We consider a version of the stochastic inventory control problem for a spectrally positive \lev demand process, in which the inventory can only
be replenished at independent exponential times. 
We show the optimality of a periodic barrier replenishment policy
that restocks any shortage below a certain threshold at each replenishment opportunity.
The optimal policies and value functions are concisely written in terms of the scale functions. Numerical results are also provided. 
\\
\noindent \small{\noindent  AMS 2020 Subject Classifications: 60G51, 93E20, 90B05 \\
\textbf{Keywords:} inventory models; spectrally one-sided \lev processes; scale functions; periodic observations.}
\end{abstract}

%
\maketitle


\section{Introduction} 

The classical continuous-time inventory model aims to
optimally control the inventory level so as to strike a balance between minimizing the inventory costs and replenishment costs. The inventory in the absence of control is typically assumed to follow a Brownian motion, a compound Poisson process, or a mixture of the two. Under the assumption that the inventory can be monitored continuously and replenishment can be made instantaneously, the existing results have
shown
the optimality of a \emph{barrier} or an \emph{$(s,S)$}-policy, depending on whether fixed (replenishment)  costs are considered. For a comprehensive review and various inventory models, see \cite{Bensoussan_book}.

In this study, we consider a new extension of the inventory model under the constraint that replenishment 
opportunities occur at the arrival times of an independent Poisson process. 
This is because, in reality, one can monitor the inventory only at intervals and, hence, barrier or $(s, S)$ policies are difficult to implement in practice. Recently, similar extensions have been studied in the context of insurance applications \cite{ATW, NobPerYamYan, PerYam}.


Analytical solutions can be pursued under the assumption of Poissonian replenishment opportunities in which, thanks to the memoryless property, the waiting time until the next opportunity is always (conditionally) exponentially distributed. With other replenishment opportunity times, the state space must be expanded to make the problem Markovian, and, to our knowledge, one must resort to numerical approaches rather than analytical solutions.

One important motivation for considering the Poissonian interarrival model is its potential applications in approximating the constant interarrival time cases. 
In the mathematical finance literature, randomization techniques (see, e.g., \cite{Carr}) are known as efficient in approximating constant maturity problems with those with Erlang-distributed maturities. 
In particular, for short maturity cases, it is known empirically that accurate approximations can be obtained by simply replacing the constant with exponential random variables \cite{leung2015analytic}.

Although the Poissonian assumption simplifies the considered problem,  it is still significantly more challenging and interesting in comparison to the continuous monitoring case.  The solutions depend directly on the rate of Poisson arrivals, and it is, therefore, of interest to study its sensitivity.




In this study, we focus on the discounted continuous-time model driven by a spectrally positive \lev demand process. In other words, the inventory, in the absence of control, follows a \lev process with only negative jumps. As is typically assumed in the literature, the inventory cost is modeled by a convex function, and the cost of replenishment is assumed to be proportional to the order amount.  
Under these assumptions, the classical continuous monitoring case admits a simple solution (see Section 7 of \cite{Y}): it is optimal to reflect the inventory process at a suitably chosen barrier, and the value function is expressed concisely in terms of the so-called \emph{scale function} (see also \cite{BLS} and Sections 4-6 of \cite{Y} for the cases with fixed costs).


This study aims to show the optimality of a \emph{periodic barrier replenishment policy}, which restocks any shortage below a certain threshold at each replenishment opportunity. The corresponding controlled inventory process becomes the \emph{Parisian reflected process} studied in \cite{APY, PerYam2}.
 We show that a periodic barrier replenishment policy is indeed optimal over the set of all admissible policies.

We follow the classical \emph{guess-and-verify} procedure to solve this stochastic control problem: 

\begin{enumerate}

\item The first step is to compute the expected net present value (NPV) of replenishment and inventory costs under periodic barrier replenishment policies. Replenishment costs, which are the expected amount of total discounted Parisian reflection, have been computed in \cite{APY}.  Inventory costs require the resolvent identity, which we compute using a similar method as in \cite{APY}. These admit semi-explicit expressions written in terms of the scale function.

\item In the second step, we select the optimal periodic barrier, which we call $b^*$ in the current study.  
We choose its value so that the slope of the candidate value function at the barrier equals the negative of the unit replenishment cost.

\item In the final step, we confirm the optimality of the selected candidate optimal policy.  To this end, we obtain a verification lemma (sufficient condition for optimality), 
which requires the value function to be sufficiently smooth and satisfy certain variational inequality. 
 By taking advantage of the existing analytical properties of the scale function, as well as some fluctuation identities, 
 we confirm that the candidate value function indeed satisfies these conditions.

 \end{enumerate}

One major advantage of applying these three steps is that one can solve the problem for a general spectrally positive \lev demand process (of both bounded and unbounded variations) without specifying a particular type of \lev measure. By reducing the problem to certain analyses on the scale function of the underlying \lev process, we avoid the use of integro-differential equation techniques, which tend to be difficult, particularly when the \lev measure has infinite activity.

%
%

The rest of the paper is organized as follows.  In Section \ref{dividends-policy}, we model the problem considered. 
Section \ref{sec_verification_lemma} gives the verification lemma.  In Section \ref{section_periodic_policy}, we study the periodic barrier replenishment policy and compute the corresponding expected NPV of the total costs.  In Section \ref{section_selection_barrier}, we select the candidate barrier. In Section \ref{section_optimality}, the optimality of the selected policy is shown and confirmed numerically. Long proofs and technical results are deferred to the appendix. Throughout the paper, superscripts $x^+ := \max (x, 0)$ and $x^- := \max(-x, 0)$ are used to indicate the positive and negative parts of $x$, respectively. The left and right hand limits are written as $f(x-):= \lim_{y \uparrow x} f(y)$ and $f(x+):= \lim_{y \downarrow x} f(y)$, respectively, whenever they exist.


\section{Inventory models with periodic replenishment opportunities}
\label{dividends-policy} 

Let $(\Omega, \mathcal{F}, \mathbb{P})$ be a probability space on which a stochastic process $D = ( D(t); t \geq 0)$ with $D(0) = 0$, modeling the aggregate demand of a single item, is defined.  Under the conditional probability $\mathbb{P}_x$, for $x \in \R$, the initial level of inventory is given by $x$ (in particular, we let $\mathbb{P} \equiv \mathbb{P}_0$).  Hence, the inventory, in the absence of control, follows the stochastic process
\begin{align*}
X(t) := x - D(t), \quad t \geq 0. 
\end{align*}

We consider a scenario where the item can be replenished only at the arrival times $\mathcal{T}_r :=(T(i); i\geq 0 )$ of a Poisson process $N^r=( N^r(t); t\geq 0) $ with intensity $r>0$, which is independent of  $X$ (and $D$).  
In other words, the interarrival times $T(i)-T(i-1)$, $i\geq 1$ (with $T(0) := 0$) are independent and exponentially distributed with mean $1/r$.  Let $\mathbb{F} := (\mathcal{F}(t); t \geq 0)$ be the filtration generated by the process $(X, N^r)$.

\par In this setting, an admissible policy, representing the cumulative amount of replenishment  $\pi := \left( R^{\pi}(t); t \geq 0 \right)$ is a nondecreasing, right-continuous, and $\mathbb{F}$-adapted process such that
\[
R^{\pi}(t)=\int_{[0,t]}\nu^{\pi}(s)\diff N^r(s),\qquad\text{$t\geq0$,}
\]
for a c\'agl\'ad process $\nu^\pi$.
In particular, the replenishment at the $i$-th replenishment opportunity $T(i)$  is given by $\nu^\pi(T(i))$ for each $i\geq 1$. 
The controlled inventory  process $U^\pi$ becomes
\[
U^\pi(t) := X(t)+ R^{\pi} (t) =X(t)+\sum_{i=1}^{\infty}\nu^{\pi}(T(i))1_{\{T(i)\leq t\}}, \qquad\text{$t \geq 0$}. 
\]


We fix a discount factor $q > 0$ and a unit cost/reward of controlling $C \in \R$. Associated with the policy $\pi \in \mathcal{A}$, the cost of inventory is modeled by $\int_0^{\infty} e^{-q t}f(U^{\pi}(t)) \diff t$ for a measurable function $f: \R \to \R$ and that of controlling is given by $C \int_{[0,\infty)} e^{-q t} \diff R^{\pi}(t)$.
The problem is to minimize their expected sum
\begin{align*}
v_{\pi} (x) := \mathbb{E}_x \left[ \int_0^{\infty} e^{-q t}f(U^{\pi}(t)) \diff t + C \int_{[0,\infty)} e^{-q t} \diff R^{\pi}(t) \right], \quad x \in \R,
\end{align*}
over the set of all admissible policies $\mathcal{A}$ that satisfy all the constraints described above and
\begin{align}
\E_x \left[ \int_{[0,\infty)} e^{-q t} \diff R^{\pi}(t) \right] < \infty. \label{assump_R_finite}
\end{align}
The problem is to compute the value function
\begin{equation}
v(x):=\inf_{\pi \in \mathcal{A}}v_{\pi}(x), \quad x \in \R, \label{the_prob}
\end{equation}
and to obtain the optimal policy $\pi^*$ that attains it, if such a policy exists.

\subsection{Spectrally one-sided \lev processes} 
We shall consider the case where the demand $D$ follows a spectrally positive \lev process, or equivalently $X$ is a spectrally negative \lev process.  We exclude the case $X$ is the negative of a subordinator so that it does not have  monotone paths a.s.
We denote the Laplace exponent of $X$ by
$\kappa: [0, \infty) \to \R$ such that
 $\E [e^{\theta X(t)}]=e^{ t \kappa(\theta)}$ for $t, \theta \geq 0$, 
with its L\'evy-Khintchine decomposition
 $$ \kappa(\theta) = \frac{\sigma^2}{2}\theta^2 + \gamma \theta + \int_{(-\infty, 0)}[e^{\theta y} - 1 - \theta y1_{\{y > -1\}} ]\Pi(\diff y), \quad \theta \geq 0.$$
Here, $\sigma \geq 0,$ $\gamma \in \R$, and the \lev measure $\Pi$ satisfies $ \int_{(-\infty, 0)} (1\wedge y^2) \Pi(\diff y) < \infty$. 

It is known (see, e.g., Lemma 2.12 of \cite{K}) that $X$ has paths of bounded variation if and only if $\sigma=0$ and $\int_{(-1, 0)} |y|\Pi(\mathrm{d}y) < \infty$. 
For the bounded variation case,
$X$ can be written as 
\begin{align*}
X(t)=ct-S(t), \quad t\geq 0, \quad \textrm{where } \quad
c:=\gamma-\int_{(-1,0)} y\Pi(\mathrm{d}y), 
\end{align*}
 and $(S(t); t\geq0)$ is a driftless subordinator. 
Here, by the assumption that $X$ is not the negative of a subordinator, necessarily we have $c>0$.


\subsection{Assumptions} 
We solve the problem \eqref{the_prob} under the following standing assumptions on
the \lev process $X$ and the running cost function $f$.
	\begin{assump} \label{assump_levy_measure}
			We assume that there exists $\bar{\theta} > 0$ such that  $\int_{(-\infty, -1]} \exp (\bar{\theta} |z|) \Pi(\diff z) < \infty$. This guarantees that $\E [X(1)] = \kappa'(0+) > -\infty$.
	\end{assump}
	\begin{assump} \label{assump_f}
		\begin{itemize}
		\item[(i)] We assume that $f$ is convex and has at most polynomial growth in the tail.  That is to say, there exist $k_1, k_2, m > 0$ and $N \in \mathbb{N}$ such that $|f(x)| \leq k_1 + k_2 |x|^N$ for all $x \in \R$ such that $|x| > m$.
		\item[(ii)] We assume that $\displaystyle f'(-\infty)<- Cq < f'(\infty)$ where $f'(\infty) := \lim_{x \rightarrow \infty}f'(x) \in (-\infty, \infty]$ and $f'(-\infty) := \lim_{x \rightarrow -\infty}f'(x) \in [-\infty, \infty)$.
		\end{itemize}
	\end{assump}
	
	These assumptions are critical for our analysis, and similar assumptions are imposed in the existing literature (see, e.g., \cite{BLS, HPY}). 
	

\begin{remark} \label{remark_finiteness_resolvents}
 By Assumptions \ref{assump_levy_measure} and \ref{assump_f} we have that $\E_x\left[ \int_0^{\infty} e^{-qt} |f (X(t))| \diff t \right] < \infty$ for all $x \in \R$.   For its proof, see the proof of Lemma 7.5 of \cite{Y}.
\end{remark}

\section{Verification Lemma} \label{sec_verification_lemma}


We first obtain
 the verification lemma for the considered problem.
Throughout the paper, we call a measurable function $g$ \emph{sufficiently smooth} on $\R$ if $g$ is $C^1 (\R)$ (resp.\ $C^2 (\R)$) when $X$ has paths of bounded (resp.\ unbounded) variation.
Let $\mathcal{L}$ be the operator acting on a sufficiently smooth function $g$, defined by
\begin{equation*}
\begin{split}
\mathcal{L} g(x)&:= \gamma g'(x)+\frac{\sigma^2}{2}g''(x) +\int_{(-\infty,0)}[g(x + z)-g(x)-g'(x)z\mathbf{1}_{\{-1 < z < 0\}}]\Pi(\mathrm{d}z). 
\end{split}
\end{equation*}
Also, we define the operator $\mathcal{M}$ acting on a measurable function $g$,
\begin{align}
	\mathcal{M} g(x) := \inf_{l \geq 0}\{C l+ g(x+l)\}. \label{def_M_operator}
\end{align}	
\begin{lemma}[Verification lemma]
	\label{verificationlemma}
	Suppose $\hat{\pi} \in \mathcal{A}$ is such that $w:=v_{\hat{\pi}}$ is sufficiently smooth on $\R$, 
	has polynomial growth (see Assumption \ref{assump_f}), and satisfies
	\begin{align}
	\label{HJB-inequality}
	(\mathcal{L} - q)w(x)+r (\mathcal{M} w(x) - w(x))+f(x) = 0,  \quad x \in\R.
	\end{align} 
	Then $v(x)=w(x)$ for all $x \in \R$ and hence $\hat{\pi}$ is an optimal policy. 
\end{lemma}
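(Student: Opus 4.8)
The plan is to use a standard verification argument based on applying the Itô formula (or the Dynkin-type identity) to the process $e^{-qt} w(U^\pi(t))$ for an arbitrary admissible policy $\pi \in \mathcal{A}$, and then use the Markovian structure imposed by the Poissonian replenishment opportunities to split the dynamics into the ``continuous'' part governed by the infinitesimal generator $\mathcal{L} - q$ of the spectrally negative L\'evy process and the ``jump'' part at the replenishment times. The key point is that, at each opportunity time $T(i)$, the controller adds $\nu^\pi(T(i)) \ge 0$, and by the definition \eqref{def_M_operator} of $\mathcal{M}$ we always have $C \nu^\pi(T(i)) + w(U^\pi(T(i)-) + \nu^\pi(T(i))) \ge \mathcal{M} w(U^\pi(T(i)-))$.

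First I would fix $\pi \in \mathcal{A}$ and introduce a localizing sequence of stopping times $\tau_n \uparrow \infty$ (e.g.\ exit times from large intervals, together with a truncation ensuring integrability) so that the stochastic integrals appearing below are true martingales. Since $w$ is sufficiently smooth and has polynomial growth, and since the replenishment part $R^\pi$ is of finite variation and a.s.\ pure jump (jumping only at the $T(i)$'s), I would write, using the change-of-variables/Itô formula for the controlled inventory $U^\pi = X + R^\pi$,
\begin{align*}
e^{-q(t\wedge\tau_n)} w(U^\pi(t\wedge\tau_n)) &= w(x) + \int_0^{t\wedge\tau_n} e^{-qs} (\mathcal{L}-q) w(U^\pi(s))\,\diff s \\
&\quad + \sum_{i \ge 1} e^{-qT(i)} \bigl[ w(U^\pi(T(i)-) + \nu^\pi(T(i))) - w(U^\pi(T(i)-)) \bigr] 1_{\{T(i) \le t\wedge\tau_n\}} + (\text{mart.}),
\end{align*}
where the martingale term collects the compensated L\'evy jump integral (and, if $\sigma>0$, the Brownian stochastic integral). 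Next I would \emph{compensate the replenishment jumps}: since the $T(i)$'s form an independent Poisson process of rate $r$ and $\nu^\pi$ is c\`agl\`ad (hence predictable), the sum above equals $r\int_0^{t\wedge\tau_n} e^{-qs}\bigl[w(U^\pi(s)+\nu^\pi(s)) - w(U^\pi(s))\bigr]\diff s$ plus a martingale. Now I invoke the HJB equation \eqref{HJB-inequality}: pointwise, $(\mathcal{L}-q)w(y) + f(y) = -r(\mathcal{M}w(y) - w(y)) \le -r(w(y+\nu) - w(y)) - rC\nu$ for every $\nu \ge 0$ by \eqref{def_M_operator}; rearranging, $(\mathcal{L}-q)w(y) + r(w(y+\nu)-w(y)) + f(y) \le -rC\nu$. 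Applying this with $y = U^\pi(s)$, $\nu = \nu^\pi(s)$ and taking expectations (the martingale terms vanish by the localization), I obtain
\begin{align*}
\E_x\bigl[e^{-q(t\wedge\tau_n)} w(U^\pi(t\wedge\tau_n))\bigr] \le w(x) - \E_x\Bigl[\int_0^{t\wedge\tau_n} e^{-qs} f(U^\pi(s))\,\diff s\Bigr] - \E_x\Bigl[ rC \int_0^{t\wedge\tau_n} e^{-qs}\nu^\pi(s)\,\diff s\Bigr],
\end{align*}
and, re-compensating the last term back to $C\E_x[\int_{[0,t\wedge\tau_n]} e^{-qs}\diff R^\pi(s)]$, this rearranges to $w(x) \ge \E_x[\int_0^{t\wedge\tau_n} e^{-qs} f(U^\pi(s))\,\diff s + C\int_{[0,t\wedge\tau_n]} e^{-qs}\diff R^\pi(s)] + \E_x[e^{-q(t\wedge\tau_n)}w(U^\pi(t\wedge\tau_n))]$. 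Letting $n \to \infty$ and then $t \to \infty$, using the polynomial growth of $w$ and $f$, Assumptions \ref{assump_levy_measure}--\ref{assump_f}, Remark \ref{remark_finiteness_resolvents}, the admissibility condition \eqref{assump_R_finite}, and dominated/monotone convergence to control the remainder term $e^{-q(t\wedge\tau_n)}w(U^\pi(t\wedge\tau_n)) \to 0$, yields $w(x) \ge v_\pi(x)$. Taking the infimum over $\pi \in \mathcal{A}$ gives $w(x) \ge v(x)$. The reverse inequality $w(x) = v_{\hat\pi}(x) \ge v(x)$ is trivial since $\hat\pi \in \mathcal{A}$, and it is automatic that $v(x) \ge$ nothing forces equality unless— so actually we get $v(x) \le w(x)$ from $\hat\pi$ being admissible and $v(x) \ge w(x)$ from the above, hence $v = w$ and $\hat\pi$ is optimal.

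The main obstacle I expect is the rigorous justification of the limiting/integrability arguments: namely, establishing that $\E_x[e^{-q(t\wedge\tau_n)} w(U^\pi(t\wedge\tau_n))] \to 0$ as $t\to\infty$ for \emph{every} admissible $\pi$, and that the local martingale terms are genuinely martingales under the localization. This requires a uniform-integrability/growth estimate on $U^\pi$ that exploits the polynomial growth bound on $w$, the exponential-moment Assumption \ref{assump_levy_measure} on the L\'evy measure (to control moments of $X$), and the finiteness condition \eqref{assump_R_finite} on the total discounted replenishment (to control the contribution of $R^\pi$); the subtlety is that $U^\pi$ can be pushed arbitrarily high by an admissible controller, so one needs the discount factor $e^{-qt}$ and the integrability of the running cost to dominate. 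A secondary technical point is handling the Itô formula in the bounded-variation case, where $w$ is only $C^1$ and $\mathcal{L}$ takes the first-order form — here one uses the change-of-variables formula for finite-variation L\'evy processes rather than the full Itô formula, which is why ``sufficiently smooth'' is defined variation-dependently.
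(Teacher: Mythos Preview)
Your overall architecture matches the paper's: apply It\^o/change-of-variables to $e^{-qt}w(U^\pi(t))$, compensate the Poisson jumps, use the HJB equation together with $\mathcal{M}w(y)\le C\nu+w(y+\nu)$, and pass to the limit. Two issues, one minor and one substantive.

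\textbf{Direction of the inequality.} You have the sign flipped. From \eqref{HJB-inequality} and $\mathcal{M}w(y)\le C\nu+w(y+\nu)$ one gets
\[
(\mathcal{L}-q)w(y)+r\bigl(w(y+\nu)-w(y)\bigr)+f(y)\ \ge\ -rC\nu,
\]
not $\le$. Consequently the It\^o argument yields $w(x)\le \E_x[\,\cdots\,]+\E_x[e^{-q(t\wedge\tau_n)}w(U^\pi(t\wedge\tau_n))]$, and the goal is $w(x)\le v_\pi(x)$ for every $\pi$ (the bound $w=v_{\hat\pi}\ge v$ is the trivial direction). Your final paragraph shows you sensed this, but the logic there is garbled; fix the sign and the conclusion falls out cleanly.

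\textbf{The remainder term does not simply vanish.} This is the real gap. You assert that $\E_x[e^{-q(t\wedge\tau_n)}w(U^\pi(t\wedge\tau_n))]\to 0$ by polynomial growth, Assumption~\ref{assump_levy_measure}, and admissibility~\eqref{assump_R_finite}. That is not enough: an admissible $R^\pi$ can satisfy $\E_x[\int_0^\infty e^{-qs}\diff R^\pi(s)]<\infty$ while $e^{-qt}R^\pi(t)^N$ fails to tend to zero for $N\ge 2$ (heuristically, $R^\pi(t)\sim e^{qt/2}$), so $e^{-qt}|w(U^\pi(t))|$ need not be controlled. The paper does \emph{not} try to kill this term directly. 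Instead it first uses that $\mathcal{M}w\le w$ forces $(\mathcal{L}-q)w+f\ge 0$, which (via another It\^o argument on the \emph{uncontrolled} process) gives the pointwise bound $w(y)\le \E_y[\int_0^\infty e^{-qs}f(X(s))\diff s]$. Applying this at $y=U^\pi(t\wedge T_n)$ with the strong Markov property converts the remainder into a tail integral $\E_x[\int_{t\wedge T_n}^\infty e^{-qs}f(R^\pi(t\wedge T_n)+X(s))\diff s]$, which is then bounded (following \cite{Y}) by $\E_x[\int_{[t\wedge T_n,\infty)}e^{-qs}(f(U^\pi(s))\diff s+C\diff R^\pi(s))]+\E_x[\int_{t\wedge T_n}^\infty e^{-qs}(f(X(s))+CqX(s))\diff s]$. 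The first piece combines with the main terms to produce exactly $v_\pi(x)$, and only the second piece---which involves the uncontrolled $X$ and is finite by Remark~\ref{remark_finiteness_resolvents}---needs to be sent to zero. In short, the trick is to \emph{absorb} the remainder into the cost functional rather than annihilate it; your proposal is missing this step.
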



\begin{remark} 
\label{remark_verificatin_lemma}
(1) The equality \eqref{HJB-inequality} can be intuitively explained by the Bellman's principle.
For a small time interval $\Delta_t$, the corresponding Bellman's equation is expected to be approximated as
\begin{align*}
	v(x) = e^{-r  \Delta_t}\E_x [ e^{-q  \Delta_t}v(X(\Delta_t))]  + (1-e^{-r \Delta_t}) \E_x [  e^{-q  \Delta_t} \mathcal{M}  v(X(\Delta_t)]+\E_x\left[\int_0^{\Delta_t} e^{-qs} f(X(s)) \diff s\right] + o(\Delta_t),
\end{align*}	
 where $e^{-r\Delta_t}$ is the probability of no replenishment opportunities over $(0,\Delta_t)$, and $1-e^{-r\Delta_t}$  its complement. Hence, using It\^o's formula, by dividing by $\Delta_t$ and taking $\Delta_t \downarrow 0$, we arrive at \eqref{HJB-inequality}. 

(2) Define the set
$\mathcal{C} := \{ x \in \R: (\mathcal{L} - q)v(x) + f(x) = 0 \}$. 
Then, $\mathcal{C}$ can be understood as the continuation region, and $\mathcal{D} := \R \backslash \mathcal{C}$ as the control region at which replenishment is made whenever the replenishment opportunity arrives.

In this paper, we aim to show that $\mathcal{C} = [b^*, \infty)$ and $\mathcal{D} = (-\infty,b^*)$ for some $b^* \in \R$.  This property is closely related to the convexity of $v$ and its slope at $b^*$. To see this, if $v$ is convex and $v'(b^*) = -C$, then necessarily we have
$\mathcal{M} v(x) - v(x) = 0$ if and only if $x \geq b^*$.

(3) There are both similarities and differences  with the classical singular control case and the version where the control process must be absolutely continuous with a  bounded density
(see (4.2) of \cite{HPY}). While the forms of the variational inequalities differ,  the convexity and the slope condition at the candidate barrier are the key elements needed  as in the current paper.
\end{remark}

\begin{proof}[Proof of Lemma \ref{verificationlemma}]
By the definition of $v$ as an infimum, it follows that $w(x) \geq v(x)$ for all $x \in \R$. Hence, it suffices to show the opposite inequality.

Fix $x \in \R$ and  $\pi\in \mathcal{A}$ with its corresponding inventory process $U^\pi$. 
Let $(T_n)_{n\in\mathbb{N}}$ be 
defined by $T_n :=\inf\{t>0:|U^\pi (t)| > n \}$; 
here and throughout, let $\inf \varnothing = \infty$. 

Because $U^\pi$ is a semi-martingale and $w$ is sufficiently smooth on $\R$, 
the change of variables/It\^o's formula 
(see Theorems II.31 and II.32 of \cite{protter}) gives under $\mathbb{P}_x$ that 
\begin{align*}
\mathrm{e}^{-q(t\wedge T_n)}w(U^\pi(t\wedge T_n))&-w(x)
=  -\int_{0}^{t\wedge T_n}\mathrm{e}^{-qs} q w(U^\pi(s-)) \mathrm{d}s
+\int_{[0, t\wedge T_n]}\mathrm{e}^{-qs}w'(U^\pi(s-)) \mathrm{d} X(s)  \\
&+ \frac{\sigma^2}{2}\int_0^{t\wedge T_n}\mathrm{e}^{-qs}w''(U^\pi(s-))\mathrm{d}s+\sum_{0 \leq s\leq t\wedge T_n}\mathrm{e}^{-qs}\big[\Delta w\big(U^\pi(s-)+\nu^{\pi}(s)\big)\Delta N^{r}(s) \big] \\
& + \sum_{0 \leq s\leq t\wedge T_n}\mathrm{e}^{-qs} \big[\Delta w\big(U^\pi(s-)+\Delta X(s)\big)-w'(U^\pi(s-))  \Delta X(s)  \big] \\
&=   \int_{0}^{t\wedge T_n}\mathrm{e}^{-qs}   (\mathcal{L}-q)w(U^\pi(s-))   \mathrm{d}s
- C \int_{[0,t\wedge T_n]}\mathrm{e}^{-qs}\nu^{\pi}(s)\mathrm{d}N^{r}(s)  \\
&+\int_0^{t\wedge T_n}\mathrm{e}^{-qs}r\left[ C \nu^\pi(s)+w\big(U^\pi(s-)+\nu^{\pi}(s)\big)-w(U^\pi(s-))\right] \mathrm{d}s  + M(t \wedge T_n)
\end{align*}
where we define, for $t \geq 0$, with $\tilde{\mathcal{N}}(\diff s\times \diff y) := \mathcal{N}(\diff s\times \diff y)-\Pi(\diff y) \diff s$,
\begin{align*}
\begin{split}
&M(t\wedge T_n) := \int_0^{t\wedge T_n} \sigma  \mathrm{e}^{-qs} w'(U^{\pi}(s-)) \diff B(s) +\lim_{\varepsilon\downarrow 0} \int_{[0,t\wedge T_n]} \int_{(-1, -\varepsilon)}  \mathrm{e}^{-qs}w'(U^{\pi}(s-))y \tilde{\mathcal{N}}(\diff s\times \diff y)\\
&+\int_{[0,t\wedge T_n]} \int_{(-\infty,0)} \mathrm{e}^{-qs} \big[ w(U^\pi(s-)+y)-w(U^\pi(s-))-w'(U^\pi(s-))y\mathbf{1}_{\{y\in (0, 1)\}} \big] \tilde{\mathcal{N}}(\diff s\times \diff y) \\
&+\int_{[0,t\wedge T_n]} \mathrm{e}^{-qs}\left[C \nu^\pi(s)+w\big(U^\pi(s-)+\nu^{\pi}(s)\big)-w(U^\pi(s-))\right] \mathrm{d} (N^r(s)-rs). 
\end{split}
\end{align*}
Here, $( B(s); s \geq 0 )$ is  a standard Brownian motion and $\mathcal{N}$ 
is a Poisson random measure in   the measure space  $([0,\infty)\times (-\infty, 0),\B [0,\infty)\times \B (-\infty,0), \diff s \times \Pi( \diff x))$. 
By the definition of $\mathcal{M}$ as in \eqref{def_M_operator},
\begin{equation*}
\begin{split}
w(x) \leq &
-\int_{0}^{t\wedge T_n}\mathrm{e}^{-qs}  \Big[ (\mathcal{L}-q)w(U^\pi(s-))+r\big(\mathcal{M} w(U^\pi(s-))-w(U^\pi(s-))\big) \Big]  \mathrm{d}s\\
& + C \int_{[0, t\wedge T_n]}\mathrm{e}^{-qs}\nu^{\pi}(s)\mathrm{d}N^{r}(s) - M(t\wedge T_n) + \mathrm{e}^{-q(t\wedge T_n)}w(U^\pi(t\wedge T_n)).
\end{split}
\end{equation*}
	Using  the assumption \eqref{HJB-inequality}, together with the fact that the process $(M(t \wedge T_n); t\geq0 )$ is a zero-mean $\mathbb{P}_x$-martingale (see Corollary 4.6 of \cite{K}),  after taking expectations, we obtain
\begin{equation}\label{aux_2}
	w(x) \leq \mathbb{E}_x \left[\int_0^{t\wedge T_n}\mathrm{e}^{-qs}f(U^\pi(s)) \diff s + C \int_{[0, t\wedge T_n]}\mathrm{e}^{-qs}\nu^{\pi}(s)\mathrm{d}N^{r}(s) + \mathrm{e}^{-q(t\wedge T_n)}w(U^\pi(t\wedge T_n)) \right].
\end{equation}	

We shall now take $t,n \uparrow \infty$ in the above inequality to complete the proof.
First, 
assumption \eqref{HJB-inequality} and the fact that $\mathcal{M} w \leq w$ imply that $(\mathcal{L} - q)w(y) +f(y)\geq 0$ for $y\in\R$. Because $w$ is sufficiently smooth 
and is of polynomial growth, by It\^o's formula together with dominated convergence, we have 
	$w(x) \leq \E_x [\int_0^\infty e^{-qs} f(X(s)) \diff s ]$ for all $x \in \R$ (for more details, see the proof of Lemma 7.5 of \cite{Y}). 
This, together with the strong Markov property, implies
\begin{align}\label{aux_1}
	\E_x\left[\mathrm{e}^{-q(t\wedge T_n)}w(U^\pi(t\wedge T_n))\right] \leq \E_x \Big[ \int_{t \wedge T_n}^\infty e^{-qs} f \big(R^\pi(t\wedge T_n) + X(s) \big) \diff s \Big].
\end{align}
Now, following the same steps as the proof of Theorem 7.1 of \cite{Y}, we have	
\begin{align*}
\E_x \Big[ \int_{t \wedge T_n}^\infty e^{-qs} f \big(R^\pi(t\wedge T_n) + X(s) \big) \diff s \Big] &\leq \E_x \Big[ \int_{[t \wedge T_n, \infty)} e^{-qs} \Big(f(U^\pi(s))\diff s + C \diff R^\pi(s) \Big)\Big]\\& + \E_x \Big[ \int_{t \wedge T_n}^\infty e^{-qs} \Big(f(X(s))+CqX(s) \Big)\diff s \Big].
\end{align*}
By using this and \eqref{aux_1} in \eqref{aux_2}, we obtain 
	$w(x) \leq v_\pi(x)
	+ \E_x [ \int_{t \wedge T_n}^\infty e^{-qs} (f(X(s))+CqX(s))\diff s ]$. 
Because $ \E_x [ \int_{t \wedge T_n}^\infty e^{-qs} | f(X(s))+CqX(s) | \diff s ] < \infty$ (which holds by Remark \ref{remark_finiteness_resolvents}), upon taking $t, n \uparrow \infty$ via monotone convergence,  we have $w(x) \leq v_\pi(x)$, as desired.
\end{proof}

\section{Periodic barrier replenishment policies} \label{section_periodic_policy}

The objective of this paper is to show the optimality of the \emph{periodic barrier replenishment policy} $\pi^b$, $b \in \R$, that pushes the inventory up to $b$ at the observation times $\mathcal{T}_r$ whenever it is below $b$. The resulting inventory process is precisely the \emph{Parisian reflected \lev process} of \cite{APY}.  

We denote, by $R_r^b$ and $U_r^b$, the aggregate sum of replenishment and the resulting inventory, respectively. 
More concretely, we have
\begin{align*} 
U_r^b(t) = X(t) \quad \textrm{and} \quad R_r^b(t) = 0, \quad 0 \leq t < T_b^-(1)
\end{align*}
where $T_{b}^-(1) := \inf\{  S \in \mathcal{T}_r :  X(S-) < b \}$ is the first replenishment time.
 The inventory is then pushed up by the amount $\Delta R_r^b(T_b^-(1)) = b-X(T_b^-(1) -)$ so that $U_r^b(T_b^-(1)) = b$. For $T_b^-(1) \leq t < T_b^-(2)  := \inf\{ S \in \mathcal{T}_r : S > T_b^-(1), U_r^b(S-) < b \}$, we have $U_r^b(t) = X(t) + (b-X(T_b^-(1)-))$ and $R_r^b(t) = R_r^b(T_b^-(1))$.  The controlled inventory process can be constructed by repeating this procedure.

We have the following decomposition:
\begin{align*}
U_r^b(t) = X(t) + R_r^b(t), \quad t \geq 0,
\end{align*}
 with
 \begin{align*}
R_r^b(t) = 
\sum_{i=1}^\infty (b-U_r^b(T_b^-(i)-)) 1_{\{ T_b^-(i) \leq t \}} =\int_{[0,t]} (b-U_r^b(s-))^+ \diff N^r(s),
\quad t \geq 0, 
\end{align*}
where the replenishment times $(T_{b}^-(n); n \geq 1)$ can be constructed inductively by 
$T_{b}^-(1)$ defined above
and
$T_{b}^-(n+1) := \inf\{ S \in \mathcal{T}_r : S > T_b^-(n), U_r^b(S-) < b \}$ for $n \geq 1$. 
We will see by \eqref{controlling_cost} that the policy $\pi^b:=(R_r^b(t);t\geq0)$ satisfies \eqref{assump_R_finite}, and is hence admissible.

In this section, we compute, via the scale function, the expected NPV of the total costs under $\pi^b$:
\begin{align} \label{v_b}
v_{b} (x) := \mathbb{E}_x \Big[ \int_0^{\infty} e^{-q t}f(U_r^{b}(t)) \diff t + C \int_{[0,\infty)} e^{-q t} \diff R_r^b(t) \Big], \quad  b, x \in \R.
\end{align}

\subsection{Scale functions}
We fix $q,r > 0$. The scale function $W^{(q)}: \R \to [0, \infty)$ of $X$ takes zero on $(-\infty,0)$, and on $[0, \infty)$ it is a strictly increasing function, defined by its Laplace transform:
\begin{align} \label{scale_function_laplace}
\begin{split}
\int_0^\infty  \mathrm{e}^{-\theta x} {W^{(q)}}(x) \diff x &= \frac 1 {\kappa(\theta)-q}, \quad \theta > \Phi(q)  := \sup \{ \lambda \geq 0: \kappa(\lambda) = q\}. \\
\end{split}
\end{align}
In addition, let, for $x \in \R$, 
\begin{align*}
\overline{W}^{(q)}(x) :=  \int_0^x {W^{(q)}}(y) \diff y, \quad 
Z^{(q)}(x) := 1 + q \overline{W}^{(q)}(x), \quad
\overline{Z}^{(q)}(x) := \int_0^x Z^{(q)} (z) \diff z. 
\end{align*}
Note that, for $x \leq 0$, $\overline{W}^{(q)}(x) = 0$, $Z^{(q)}(x) = 1$,  and $\overline{Z}^{(q)}(x) = x$. 
We also define, for $\theta\geq 0$ 
and $x \in \R$, 
\begin{align}
	Z^{(q)}(x, \theta) &:=e^{\theta x} \left( 1 + (q- \kappa(\theta)) \int_0^{x} e^{-\theta z} W^{(q)}(z) \diff z	\right). 
	\label{Z2}
\end{align}
In particular, for $x \in \R$, $Z^{(q)}(x, 0) =Z^{(q)}(x)$ 
and
\begin{align} \label{Z_special}
	\begin{split}
		Z^{(q)}(x, \Phi(q+r)) &=e^{\Phi(q+r) x} \left( 1 -r \int_0^{x} e^{-\Phi(q+r) z} W^{(q)}(z) \diff z \right), \\ 
		Z^{(q+r)}(x, \Phi(q)) &=e^{\Phi(q) x} \left( 1 +r \int_0^{x} e^{-\Phi(q) z} W^{(q+r)}(z) \diff z \right).
		\end{split} \end{align}
Finally, let
		\begin{align*}
		Z^{(q,r)}(x) &:=\frac{r}{q+r} Z^{(q)}(x)
		+\frac{q}{q+r} Z^{(q)}(x,\Phi(q+r) ), \quad x \in \R,
\end{align*}
and, for all $x, y \in \R$, 
\begin{align} \label{W_a_def}
\begin{split}
		W_y^{(q,r)}(x) &:=W^{(q+r)}(x-y)- r\int_{0}^{x}  W^{(q)}(x-z)W^{(q+r)}(z-y)\diff z \\ &=W^{(q)}(x-y)+r\int_0^{-y}W^{(q)}(x-u-y)W^{(q+r)}(u) \diff u, 
		\end{split}
\end{align}
where the second equality holds by (7) of \cite{LRZ}, 
and in particular
$W_y^{(q,r)}(x)=W^{(q)}(x-y)$ for $y \geq 0$. 
	

For the rest of this subsection, we list several fluctuation identities which we use later in the paper. For the spectrally negative \lev process $X$, define
\begin{align*}
\tau_a^- := \inf \left\{ t > 0: X(t) < a \right\} \quad \textrm{and} \quad \tau_a^+ := \inf \left\{ t > 0: X(t) >  a \right\}, \quad a \in \R.
\end{align*}
By using identity (3.19) in \cite{APP2007}, for $x \in \R$ and $\theta \geq 0$,
\begin{align} \label{H_theta}
H^{(q+r)}(x, \theta) := \E_{x}\left[e^{-(q+r)\tau_0^-+ \theta  X(\tau_0^-)} 1_{\{\tau_0^-<\infty\}}\right] = Z^{(q+r)}(x,\theta)-\frac{\kappa(\theta)-(q+r)}{\theta-\Phi(q+r)}W^{(q+r)}(x),
\end{align}
where, in particular,
\begin{align} \label{H_simple}
\begin{split}
H^{(q+r)}(x, \Phi(q)) &= \E_{x}\left[e^{-(q+r)\tau_0^-+\Phi(q)X(\tau_0^-)} 1_{\{\tau_0^-<\infty\}}\right]=Z^{(q+r)} (x, \Phi(q)) - \frac {r W^{(q+r)}(x)} {\Phi(q+r) - \Phi(q)}, \\
H^{(q+r)}(x)&:=H^{(q+r)}(x, 0) = \E_{x}\left[e^{-(q+r)\tau_0^-} \right]=Z^{(q+r)}(x)-\frac{q+r}{\Phi(q+r)}W^{(q+r)}(x). 
\end{split}
\end{align}

For any Borel set $A\subset(-\infty,0]$ and $x\leq0$, by Theorem 2.7(ii) in \cite{KKR}, 
		\begin{equation}
		\E_x\Big[\int_0^{\tau_0^+}e^{-(q+r)t}1_{\{X(t)\in A\}}\diff t \Big]=\int_{A}\Theta^{(q+r)}(x,y)\diff y, \label{killed_resolvent}
		\end{equation}
where we define, for $x,y \in \R$, 
	\begin{align}
	\Theta^{(q+r)}(x,y):= e^{\Phi(q+r)x} W^{(q+r)}(-y) - W^{(q+r)}(x-y). \label{Theta_def}
	\end{align}
\begin{remark} \label{remark_theta_sign}
	(i) For $x, y \leq 0$,  by the identity \eqref{killed_resolvent}, $\Theta^{(q+r)}(x,y) \geq 0$.
	
	(ii) On the other hand, for $x > 0$ and $y \leq x$, $\Theta^{(q+r)}(x,y) \leq 0$. Indeed,
	by \eqref{killed_resolvent},
		\begin{align} \label{f_and_theta}
			\begin{split}
				0\leq \E \Big[\int_{0}^{\tau_x^+} e^{-(q+r)t}1_{\{X(t) \in \diff y \}}\diff t \Big]
				=- e^{-\Phi(q+r)x}\Theta^{(q+r)}(x,y)\diff y.
			\end{split}
		\end{align}
	\end{remark}
Let $\underline{X}$ be the running infimum process of $X$ and $\mathbf{e}_{q+r}$ be an independent exponential random variable with parameter $q+r$.
By Corollary 2.2 of \cite{KKR}, for Borel subsets on $[0, \infty)$, 
\begin{align}
	\p \left( -\underline{X} (\mathbf{e}_{q+r}) \in \diff y \right) = \frac {q+r} {\Phi(q+r)} W^{(q+r)} (\diff y) - (q+r) W^{(q+r)} (y) \diff y,
	\label{density_running_min} 
	\end{align}
where $W^{(q+r)}(\diff y)$ is the measure such that $W^{(q+r)}(y) = \int_{[0,y]}W^{(q+r)}(\diff z)$  (see  \cite[(8.20)]{K}). 
 \begin{remark}\label{remark_smoothness_zero} 
\begin{enumerate}
\item By (8.26) of \cite{K}, the left- and right-hand derivatives of $W^{(q)}$ always exist on $\R \backslash \{0\}$.  In addition, as in, e.g., \cite[Theorem 3]{Chan2011}, if $X$ is of unbounded variation or the \lev measure is atomless, we have ${W^{(q)}} \in C^1(\R \backslash \{0\})$.

\item As in Lemmas 3.1 and 3.2 of \cite{KKR},
\begin{align*} 
\begin{split}
{W^{(q)}} (0) &= \left\{ \begin{array}{ll} 0 & \textrm{if $X$ is of unbounded
variation,} \\ \frac 1 {c} & \textrm{if $X$ is of bounded variation,}
\end{array} \right. \\
{W^{(q)\prime}} (0+) &
=
\left\{ \begin{array}{ll}  \frac 2 {\sigma^2} & \textrm{if }\sigma > 0, \\
\infty & \textrm{if }\sigma = 0 \; \textrm{and} \; \Pi(-\infty,0) = \infty, \\
\frac {q + \Pi(-\infty, 0)} {c^2} &  \textrm{if }\sigma = 0 \; \textrm{and} \; \Pi(-\infty, 0) < \infty.
\end{array} \right.
\end{split}
\end{align*}
\item 
As in Lemma 3.3 of \cite{KKR}, 
$W_{\Phi(q)}(x) := e^{-\Phi(q) x}{W^{(q)}} (x) \nearrow \kappa'(\Phi(q))^{-1}$, as $x \uparrow \infty$.
\end{enumerate}
\end{remark}

\subsection{The computation of $v_b$}


We shall now write the expected NPV of total costs $v_b$ as in \eqref{v_b}.  For the controlling cost, it has already been obtained in Corollary 3.2(iii) of \cite{APY} that, for $b,x \in \R$,
\begin{align} \label{controlling_cost}
\mathbb{E}_x \left[ \int_{[0,\infty)} e^{-q t} \diff R_r^b(t) \right] =  \frac  {\Phi(q+r)- \Phi(q)} {\Phi(q+r) \Phi(q)}  Z^{(q,r)}(x-b) - \frac r {q+r} \Big\{ \overline{Z}^{(q)} (x-b) + \frac {\kappa'(0+)} q \Big\}. 
\end{align}
%
%
Hence, it is left to compute the expected NPV of the inventory cost. 

Recall $H^{(q+r)}$ as in \eqref{H_theta}, and in order to obtain a concise expression 
	for $v_b$ let us define, for $x, y \in \R$, 
\begin{align}
\Upsilon(x,y)
&:=-\Theta^{(q+r)}(x,y)  + r \int_0^x W^{(q)}(x-z) \Theta^{(q+r)}(z,y)  \diff z  \label{def_Upsilon} \\
\label{fun_ups}
	&= W_{y}^{(q,r)}(x) - Z^{(q)}(x, \Phi(q+r)) W^{(q+r)} (-y), 
\end{align}	
where the second equality holds by \eqref{Z_special} and \eqref{Theta_def}.  

\begin{remark}\label{rem_y<0}
(i) Using \eqref{Z2}, for $x<0$, $H^{(q+r)}(x,\theta)=e^{\theta x} > 0$ for $\theta \geq 0$. 
\\
(ii) Using \eqref{fun_ups} together with \eqref{W_a_def}, we have that $\Upsilon(x,y)=W^{(q)}(x-y)$ for $y >0$.
\end{remark}

\begin{remark} 
The function $\Upsilon(x,y)$ will be a key function for the rest of the analysis in this paper. It coincides with $-\Theta^{(q+r)}(x,y)$ when $x < 0$ and with $W^{(q)}(x-y)$ for $y >0$ as in the above remark. To see further relationships with these functions, see \eqref{undershoot_generator} and Lemma \ref{lemma_Upsilon_integral} in the appendix.


\end{remark}




The proof of the following theorem is given in Appendix  \ref{proof_resolvents}.
\begin{theorem} \label{theorem_resolvents}
For $x, b \in \R$, and a positive bounded measurable function $h$ on $\R$ with  compact support 
\begin{align}
\begin{split}\label{g_b_inf_inf}
&\E_x\left[ \int_0^{\infty} e^{-qt} h (U_r^b(t)) \diff t \right]=\int_{-\infty}^\infty  h(y)r^{(q,r)}_b(x,y)\diff y,
\end{split}
\end{align}
where, for $x,y\in\R$, 
\begin{align*}
r^{(q,r)}_b(x,y):= \frac  {q+r} {qr} \frac  {\Phi(q) (\Phi(q+r)- \Phi(q))} {\Phi(q+r)}   Z^{(q,r)} (x-b) 
H^{(q+r)}(b-y, \Phi(q)) 
- \Upsilon(x-b,y-b)
. 
\end{align*}


\end{theorem}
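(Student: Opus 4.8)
Here is how I would prove Theorem \ref{theorem_resolvents}. The plan is to combine a renewal (strong Markov) argument at the first replenishment time with the classical fluctuation identities for $X$.

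\emph{Step 1 (reduction to $b=0$ and the renewal equation).} Since $X$ is spatially homogeneous, under $\mathbb{P}_{x-b}$ the process $b$ plus the Parisian reflected (at $0$) process of $X-b$ has the same law as $U_r^b$ under $\mathbb{P}_x$; hence $\E_x[\int_0^\infty e^{-qt}h(U_r^b(t))\diff t]=\E_{x-b}[\int_0^\infty e^{-qt}\tilde h(U_r^0(t))\diff t]$ with $\tilde h:=h(\cdot+b)$. As the claimed kernel satisfies $r^{(q,r)}_b(x,y)=r^{(q,r)}_0(x-b,y-b)$, it suffices to prove the statement for $b=0$. Write $G(x):=\E_x[\int_0^\infty e^{-qt}h(U_r^0(t))\diff t]$ and let $T:=T_0^-(1)$ be the first replenishment time. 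On $[0,T)$ we have $U_r^0\equiv X$ and $U_r^0(T)=0$, so the strong Markov property at $T$ gives
\begin{align*}
G(x) &= A(x)+\Lambda(x)\,G(0), \\
A(x) &:= \E_x\Big[\int_0^T e^{-qt}h(X(t))\diff t\Big], \qquad \Lambda(x):=\E_x\big[e^{-qT}1_{\{T<\infty\}}\big],
\end{align*}
(reading $\int_0^T$ as $\int_0^\infty$ on $\{T=\infty\}$); putting $x=0$ solves $G(0)=A(0)/(1-\Lambda(0))$, and all quantities are finite for bounded compactly supported $h$ by Remark \ref{remark_finiteness_resolvents}. Conditioning on the independent Poissonian observation process, $\{T>t\}$ is exactly the event that no observation falls in the occupation set $\{s\le t:X(s)<0\}$, so with $\ell_t:=\int_0^t 1_{\{X(s)<0\}}\diff s$,
\[
A(x)=\E_x\Big[\int_0^\infty e^{-qt}e^{-r\ell_t}h(X(t))\diff t\Big],\qquad 1-\Lambda(x)=q\int_0^\infty e^{-qt}\E_x\big[e^{-r\ell_t}\big]\diff t .
\]

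\emph{Step 2 (closed forms for $A$ and $\Lambda$).} Here I would follow the method of \cite{APY}: decompose a path of $X$ into its sojourns in $[0,\infty)$ and its excursions below $0$, noting that --- because the observations form an independent Poisson process --- an excursion below $0$ of length $\ell$ is ``caught'' with conditional probability $1-e^{-r\ell}$, which turns the excursion contributions into $(q+r)$-discounted objects. Using $\E_{-z}[e^{-(q+r)\tau_0^+}]=e^{-\Phi(q+r)z}$ for $z\ge 0$, the $(q+r)$-resolvent of $X$ killed at $\tau_0^+$ from \eqref{killed_resolvent} (the function $\Theta^{(q+r)}$), the $q$-resolvent of $X$ killed at $\tau_0^-$, and the Gerber--Shiu identity \eqref{H_theta} (with $q$ in place of $q+r$) together with the overshoot law at $\tau_0^-$, one is led to
\begin{align*}
A(x) &= Z^{(q)}(x,\Phi(q+r))\,A(0)-\int_{-\infty}^\infty h(y)\,\Upsilon(x,y)\,\diff y, \\
A(0) &= \frac{\Phi(q+r)-\Phi(q)}{r}\int_{-\infty}^\infty h(y)\,H^{(q+r)}(-y,\Phi(q))\,\diff y,
\end{align*}
and, by the same scheme, a companion formula for $\Lambda$ yielding $1-\Lambda(0)=q\Phi(q+r)/\big((q+r)\Phi(q)\big)$ and $\Lambda(x)/(1-\Lambda(0))=\tfrac{(q+r)\Phi(q)}{q\Phi(q+r)}Z^{(q,r)}(x)-Z^{(q)}(x,\Phi(q+r))$. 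The identity \eqref{def_Upsilon}--\eqref{fun_ups} is exactly what repackages the $\Theta^{(q+r)}$-convolutions and the overshoot kernel into $\Upsilon$; en route one checks that for $y>0$ the kernel of $A$ reduces to $W^{(q)}(x-y)$, in agreement with Remark \ref{rem_y<0}(ii). (Equivalently, both displays are the occupation-time resolvent identities for $X$, which also covers the unbounded variation case uniformly.)

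\emph{Step 3 (assembly).} Substituting into $G(x)=A(x)+\Lambda(x)A(0)/(1-\Lambda(0))$, the coefficient of $A(0)$ collapses, via $Z^{(q,r)}=\tfrac{r}{q+r}Z^{(q)}+\tfrac{q}{q+r}Z^{(q)}(\cdot,\Phi(q+r))$ and the value of $1-\Lambda(0)$, to $\tfrac{(q+r)\Phi(q)}{q\Phi(q+r)}Z^{(q,r)}(x)$, so that
\[
G(x)=\frac{(q+r)\Phi(q)}{q\Phi(q+r)}\,Z^{(q,r)}(x)\,A(0)-\int_{-\infty}^\infty h(y)\,\Upsilon(x,y)\,\diff y ;
\]
plugging in $A(0)$ gives $G(x)=\int_{-\infty}^\infty h(y)\,r^{(q,r)}_0(x,y)\,\diff y$, and undoing Step 1 yields \eqref{g_b_inf_inf}.

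The step I expect to be the main obstacle is Step 2: correctly bookkeeping the switch between the rate-$q$ and rate-$(q+r)$ regimes across the alternating sojourns above and excursions below $0$, handling the overshoot/Gerber--Shiu term that glues a sojourn to the ensuing excursion, and verifying that everything telescopes into the compact form built from $Z^{(q,r)}$, $Z^{(q)}(\cdot,\Phi(q+r))$, $H^{(q+r)}(\cdot,\Phi(q))$ and $\Upsilon$ --- which is where \eqref{def_Upsilon}--\eqref{fun_ups} and the scale-function identities \eqref{Z_special}, \eqref{H_simple}, \eqref{f_and_theta} do the work. The remaining algebra in Step 3 is then routine. The full computation is carried out in Appendix \ref{proof_resolvents}.
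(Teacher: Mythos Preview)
Your plan is sound and arrives at the right formula, but it is organized differently from the paper's proof. You perform a single renewal at the first replenishment time $T=T_0^-(1)$, encode $\{T>t\}$ through the sub-zero occupation time $\ell_t$, and then identify $A(x)=\E_x[\int_0^\infty e^{-qt-r\ell_t}h(X(t))\diff t]$ and $\Lambda(x)$ with occupation-time resolvent identities (essentially the \cite{LRZ} machinery behind $W^{(q,r)}_y$), after which the assembly in Step~3 is indeed routine. The paper, by contrast, never stops at $T_0^-(1)$: it first applies the strong Markov property at $\tau_0^-$, writes $g(X(\tau_0^-))$ for $x<0$ via a second decomposition at $\tau_0^+\wedge T(1)$ (giving $g(x)=A(x)g(0)+B(x)$ below $0$), and then determines $g(0)$ from an independent linear equation obtained by stopping at the first Poisson epoch $T(1)$ starting from $0$, namely $g(0)=\gamma_1+\gamma_2 g(0)+\gamma_3$, with $\gamma_3$ evaluated by taking a Laplace transform of the expression for $g$ on $(0,\infty)$.

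What your route buys is a cleaner one-equation renewal structure and a transparent probabilistic interpretation of the $(q)\!\to\!(q{+}r)$ switch via $e^{-r\ell_t}$; the cost is that Step~2 must invoke (or rederive) the two-rate potential density for $X$, which is the non-elementary input. The paper's route avoids the occupation-time resolvent altogether and stays within classical one-rate identities \eqref{resolvent_density}, \eqref{resolvent_density_2}, \eqref{H_theta} and \eqref{killed_resolvent}, at the price of a longer computation (the limit $c\to\infty$ in the two-sided exit formula, and the explicit evaluation of $\int_0^\infty e^{-\Phi(q+r)x}\Upsilon(x,y)\diff x$ needed for $\gamma_3$). Either way, once your Step~2 is fleshed out, the $Z^{(q)}(\cdot,\Phi(q+r))$ terms cancel exactly as you indicate and the coefficient in front of $A(0)$ collapses to $\tfrac{(q+r)\Phi(q)}{q\Phi(q+r)}Z^{(q,r)}(x)$, matching \eqref{g_b_inf_inf}. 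One small remark: your final line ``The full computation is carried out in Appendix~\ref{proof_resolvents}'' points to the paper's appendix, which follows the \emph{other} decomposition; if you keep your approach, that appendix would have to be rewritten around the occupation-time identity for $A(x)$.
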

Now using \eqref{controlling_cost} and Theorem \ref{theorem_resolvents}, as well as Lemma \ref{A3_fin} (given in the appendix), we obtain the expression for \eqref{v_b}.
\begin{proposition} \label{prop_v_b}For $x, b \in \R$, the function $v_b(x)$ is finite and can be written
\begin{align}\label{true_v_b}
\begin{split}
v_b(x) 
&=     F(b) Z^{(q,r)}(x-b) - \int_{-\infty}^{\infty} f(y)  \Upsilon(x-b,y-b)
 \diff y - \frac{C r}{q+r} \Big\{ \overline{Z}^{(q)} (x-b) + \frac {\kappa'(0+)} q \Big\} 
 \end{split}
\end{align}
where 
\begin{align}\label{C(b)}
F(b) &:= 
\frac  {\Phi(q+r)- \Phi(q)} {\Phi(q+r)}  \Big[ \frac  {q+r} {qr} \Phi(q) \int_{-\infty}^{\infty} f(y) 
H^{(q+r)}(b-y, \Phi(q)) 
\diff y + \ \frac{C}{\Phi(q)} \Big], \end{align}
which is well-defined and finite by Lemma \ref{A3_fin} and Remark \ref{rem_y<0}(i). 
In particular, 
for $x < b$, from \eqref{def_Upsilon}, 
\begin{multline}\label{v_f_x<b}
v_b(x) 
=  F(b)\frac  { r + q  e^{\Phi(q+r)(x-b)}}{q+r} + \int_{-\infty}^b f(y)  
\Theta^{(q+r)} (x-b, y-b)
 \diff y - \frac{C r}{q+r} \Big\{ x-b + \frac {\kappa'(0+)} q \Big\}.
\end{multline}
\end{proposition}
\begin{proof} 
By  Theorem \ref{theorem_resolvents} and  dominated convergence (due to Lemmas \ref{A3_fin} and \ref{A1_new version}),
 identity \eqref{g_b_inf_inf} holds for $h = f$. 
By this and \eqref{controlling_cost}, the result holds after simplification.  
\end{proof}
\subsection{Polynomial growth of $v_b$.} 
	We conclude this section with the following property of $v_b$, which is required in the verification lemma (Lemma \ref{verificationlemma}).

\begin{lemma} \label{lemma_v_b_polynomial}For each $b \in \R$, $x \mapsto v_b(x)$ is of polynomial growth. 
\end{lemma}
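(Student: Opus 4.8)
The plan is to read off the polynomial growth of $v_b$ from the explicit formula \eqref{true_v_b}, by bounding each of the three terms on the right-hand side separately. The last term, $-\frac{Cr}{q+r}\{\overline{Z}^{(q)}(x-b) + \kappa'(0+)/q\}$, is trivially of polynomial growth: for $x \geq b$, $\overline{Z}^{(q)}(x-b) = (x-b) + q\int_0^{x-b}\overline{W}^{(q)}(y)\diff y$, and by Remark \ref{remark_smoothness_zero}(3) we have $W^{(q)}(y) \leq \kappa'(\Phi(q))^{-1} e^{\Phi(q) y}$, so $\overline{W}^{(q)}$ and hence $\overline{Z}^{(q)}$ grow at most exponentially; but in fact a closer look (or simply noting that $Z^{(q,r)}$ below is the dominant exponential piece) shows the relevant comparison is with the leading term, so I will treat the exponential growth rates together. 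For $x < b$, $\overline{Z}^{(q)}(x-b) = x-b$ is linear.

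The first term is $F(b)Z^{(q,r)}(x-b)$. Here $b$ is fixed, so $F(b)$ is a finite constant by Proposition \ref{prop_v_b}. The function $Z^{(q,r)}(x-b) = \frac{r}{q+r}Z^{(q)}(x-b) + \frac{q}{q+r}Z^{(q)}(x-b,\Phi(q+r))$ is bounded by $1$ on $(-\infty, b]$ (since $Z^{(q)} \equiv 1$ and $Z^{(q)}(\cdot,\Phi(q+r)) = e^{\Phi(q+r)\cdot}(\cdots)$, which for negative argument equals $e^{\Phi(q+r)(x-b)} \leq 1$, using \eqref{Z_special} and the fact that the bracket there is positive by \eqref{H_simple} and Remark \ref{rem_y<0}(i) applied to $H^{(q+r)}$), and for $x \geq b$ grows at most like $e^{\Phi(q+r)(x-b)}$, hence at most exponentially — but I actually want \emph{polynomial} growth, so I must argue more carefully. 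The key point is that $v_b$ is the expected NPV of $f(U_r^b)$ plus a controlling cost, and $f$ has polynomial growth; the exponential factors in the scale-function representation cancel. Rather than tracking this cancellation through the explicit formula, the cleaner route is to bound $v_b$ probabilistically: split $v_b(x)$ as in \eqref{v_b} and note $U_r^b(t) \geq X(t)$ always and $U_r^b(t) \leq \max(X(t), b)$ when started from $x \leq b$, while for $x \geq b$ one has $X(t) \leq U_r^b(t) \leq x + (\text{something dominated by the overshoot})$. Then $|f(U_r^b(t))| \leq k_1 + k_2(|X(t)| + |b| + |x|)^N$ up to a constant depending on $b$, so $\E_x[\int_0^\infty e^{-qt}|f(U_r^b(t))|\diff t] \leq $ polynomial in $x$ by Remark \ref{remark_finiteness_resolvents} (the moments $\E_x[\sup_{s\le t}|X(s)|^N e^{-qt}]$ integrate to something polynomial in $x$, using stationarity of increments and Assumption \ref{assump_levy_measure}). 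The controlling-cost term is handled by \eqref{controlling_cost}, which again reduces to the same scale-function estimates and is $O(|x|^{\mathrm{const}})$, in fact linear-plus-bounded-exponential; combined with the probabilistic bound $C\int_{[0,\infty)}e^{-qt}\diff R_r^b(t) \le C(b-x)^+ + C\cdot(\text{finite expected future reflection})$ one gets at most linear growth in $x$.

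In practice I would present a hybrid: use the probabilistic estimate $U_r^b(t) \in [X(t), \max(X(t),b) \vee x]$ to get $v_b(x) \le k_1' + k_2'|x|^N + k_2'\E_x[\int_0^\infty e^{-qt}(1+|X(t)|^N)\diff t]$ for a constant depending on $b$, and then invoke Remark \ref{remark_finiteness_resolvents} with $g(z) = 1 + |z|^N$ together with the standard shift $\E_x[\phi(X(t))] = \E_0[\phi(x + X(t))]$ to conclude the resolvent is polynomial in $x$; a matching lower bound $v_b(x) \ge -k_1' - k_2'|x|^N - k_2'\E_x[\int_0^\infty e^{-qt}|X(t)|^N\diff t]$ follows from $f \ge -(k_1 + k_2|\cdot|^N)$ and nonnegativity of $R_r^b$. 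The main obstacle is making the dependence of the bounds on $x$ genuinely polynomial rather than exponential — i.e. resisting the temptation to bound $W^{(q)}$ and $Z^{(q)}$ by their exponential asymptotics, and instead exploiting that the combination appearing in $v_b$ represents an expectation of a polynomially-growing functional. Once the probabilistic representation is used, everything reduces to the already-cited resolvent finiteness of Remark \ref{remark_finiteness_resolvents}, together with the elementary observation that $x\mapsto \E_x[\int_0^\infty e^{-qt}|X(t)|^N\diff t]$ is itself of polynomial growth of degree $N$ in $x$ (bound $|x+X(t)|^N \le 2^N(|x|^N + |X(t)|^N)$).
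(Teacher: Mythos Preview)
Your probabilistic strategy is the right idea, but the pathwise upper bound you rely on is false. You claim $U_r^b(t) \leq \max(X(t), b)$ when $x \leq b$, and more generally $U_r^b(t) \in [X(t), \max(X(t),b) \vee x]$. Counterexample: take $x = b = 0$, let $X$ fall to $-5$ by the first Poisson arrival $T(1)$ (so $U_r^b$ is refilled to $0$ and $R_r^b(T(1)) = 5$), and then let $X$ climb continuously back to $5$ before $T(2)$; at that moment $U_r^b = X + 5 = 10$, while $\max(X, b) \vee x = 5$. The point is that accumulated past refills can push $U_r^b$ well above both $b$ and the current value of $X$. A correct replacement is $U_r^b(t) \leq X(t) + (b - \underline X(t))^+$ with $\underline X(t) = \inf_{s \leq t} X(s)$: at the last refill time $\tau \leq t$ one has $U_r^b(\tau)=b$, hence $R_r^b(t) = R_r^b(\tau) = b - X(\tau) \leq b - \underline X(t)$. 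With this bound your outline can be completed, but you must then control moments of the running infimum $\underline X(t)$, not just of $X(t)$.

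The paper takes a different and cleaner route: rather than bounding $U_r^b(t)$ in absolute terms, it proves a pathwise coupling estimate. Driving both Parisian-reflected processes by the same $X$ and the same Poisson clock, it shows for $x < y$ that $0 \leq U_r^{b,y}(t) - U_r^{b,x}(t) \leq y - x$ and $0 \leq R_r^{b,x}(t) - R_r^{b,y}(t) \leq y - x$ for all $t \geq 0$, by checking that the two controlled trajectories can never cross and that the gap between them can only shrink at replenishment times. Since $v_b$ is finite at any fixed reference point (Proposition~\ref{prop_v_b}), these Lipschitz-in-starting-point bounds combined with the polynomial growth of $f$ immediately give polynomial growth of $v_b$ in $x$, with no appeal to the running infimum or to Remark~\ref{remark_finiteness_resolvents}.
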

\begin{proof}
 Under $\p$ where $X(0) = 0$ and $z \in \R$, let $U^{b, z}_r$ be the Parisian reflected process with barrier $b \in \R$ driven by $(X(t) + z; t \geq 0)$ and define $R^{b,z}_r$ similarly so that 
$U^{b,z}_r(t) = z + X(t) + R_r^{b,z} (t)$, $t \geq 0$.
Then,  
 \begin{align}
  U^{b,y}_r(t)- U^{b,x}_r(t)
  = (y-x) + (R_r^{b,y} (t) - R_r^{b,x} (t)), \quad x < y. \label{U_diff}
 \end{align}
 
We first show that, for $y > x$,
\begin{align}
U^{b,y}_r(t) - U^{b,x}_r(t) \geq 0, \quad t \geq 0. \label{U_monotone_in_starting_value}
\end{align}
   Let $\sigma:=\inf\{ t > 0: U^{b,x}_r(t) > U^{b,y}_r(t)\}$, and  assume (to derive a contradiction) that $\sigma<\infty$. Because the increments of $U^{b,x}_r$ and $U^{b,y}_r$ can differ only at the jump times of  $R_r^{b,x}$ and $R_r^{b,y}$, we must have that $\Delta R_r^{b,x}(\sigma)>0$ and $U^{b,x}_r(\sigma-)<b$. If $U^{b,y}_r(\sigma-) \leq b$ then $U^{b,x}_r(\sigma) = U^{b,y}_r(\sigma)  = b$.  If 
   	$U^{b,y}_r(\sigma-) >b$ then $U^{b,x}_r(\sigma) = b < U^{b,y}_r(\sigma-) =  U^{b,y}_r(\sigma)$. In both cases, $U^{b,x}_r(\sigma) \leq U^{b,y}_r(\sigma)$ and the inequality holds until the next Poisson arrival time after $\sigma$, which contradicts with the definition of $\sigma$. 
	Hence, we must have $\sigma = \infty$ or equivalently \eqref{U_monotone_in_starting_value}.
   
   On the other hand, letting $\sigma_0 := \inf \{ t > 0: U^{b,x}_r(t) = U^{b,y}_r(t) \}$, we have, for $i \geq 1$ with $T(i) \leq \sigma_0$,  
   \begin{align*}
   \Delta R_r^{b,x} (T(i)) = (b - U_r^{b,x}(T(i)-))^+ \geq (b - U_r^{b,y}(T(i)-))^+ =\Delta R_r^{b,y} (T(i))
   \end{align*}
   while, for $t \geq \sigma_0$, we must have $\Delta R_r^{b,x} (t) = \Delta R_r^{b,y} (t)$. This together with \eqref{U_monotone_in_starting_value} implies that
   	   \begin{align}
   	   	0\leq R_r^{b,x} (t) - R_r^{b,y} (t) \leq y - x, \quad t \geq 0. \label{R_monotone_in_starting_value}
   	   \end{align}
%
%
%
%
By \eqref{U_diff} and \eqref{R_monotone_in_starting_value}, we also have
 \begin{align} \label{U_bound}
 0 \leq U^{b,y}_r(t) - U^{b,x}_r(t)  \leq y-x, \quad t \geq 0.
 \end{align}
 By these bounds and Assumption \ref{assump_f}(i), we have that $v_b$ is of polynomial growth.
\end{proof}

 \section{Selection of $b^*$} \label{section_selection_barrier}
In this section, motivated by the discussion given in Remark \ref{remark_verificatin_lemma}(2), we pursue our candidate barrier $b^*$ such that $v'_{b^*}(b^*) = -C$, and show its existence. 
The convexity of $v_{b^*}$ is shown later in the paper.
\par 

We first obtain the following two lemmas, whose proofs are deferred to Appendices \ref{proof_common_derivative} and \ref{proof_A2}.


\begin{lemma} \label{lemma_Psi_Upsilon}
Define, for $x, y \in \R$,
\begin{align}\label{fun_psi}
\begin{split}
\Psi(x,y) &:= 
W_y^{(q,r)}(x)
- \frac{\Phi(q+r)}{q+r} Z^{(q)}(x, \Phi(q+r)) Z^{(q+r)} (-y).
\end{split}
\end{align}
Then, for $y < b$, 
\begin{align} \label{common_derivative}
\begin{split}
\frac {\partial} {\partial z} \Upsilon(z,y-b) \Big|_{z = (x-b)+} &= - \frac {\partial} {\partial z}  \Psi(x-b,z)\Big|_{z = (y-b)-} \\ &= W^{(q+r)\prime}((x-y)+) - r \int_b^x W^{(q) }(x-z)W^{(q+r)\prime}(z-y)  \diff z \\
&- \Phi(q+r) W^{(q+r)}(b-y)   Z^{(q)}(x-b, \Phi(q+r)).
\end{split}
\end{align}
\end{lemma}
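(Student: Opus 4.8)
The plan is to verify the two asserted equalities by directly differentiating the explicit formulas \eqref{fun_ups} for $\Upsilon$ and \eqref{fun_psi} for $\Psi$, using the representation \eqref{W_a_def} of $W_y^{(q,r)}$ together with the elementary properties of the scale functions collected in Remark \ref{remark_smoothness_zero}. First I would rewrite $\Upsilon(z, y-b)$ for $z$ near $x-b$ (with $y < b$, so $y - b < 0$) using the second line of \eqref{W_a_def}:
\begin{align*}
W_{y-b}^{(q,r)}(z) = W^{(q)}(z - (y-b)) + r \int_0^{b-y} W^{(q)}(z - u - (y-b)) W^{(q+r)}(u) \diff u,
\end{align*}
so that $\Upsilon(z, y-b) = W_{y-b}^{(q,r)}(z) - Z^{(q)}(z, \Phi(q+r)) W^{(q+r)}(b-y)$. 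Differentiating in $z$ and then substituting $z = (x-b)$, changing the variable in the integral back from $u$ to $z = u + (y-b) + (x-b)$ (i.e.\ shifting so the integration runs over $[b, x]$ in the variable $z$), should produce the right-hand side of \eqref{common_derivative}; the term $\Phi(q+r) W^{(q+r)}(b-y) Z^{(q)}(x-b, \Phi(q+r))$ comes from $\partial_z Z^{(q)}(z, \Phi(q+r))$, for which I would use the standard identity $\partial_x Z^{(q)}(x,\theta) = \theta Z^{(q)}(x,\theta) + (q - \kappa(\theta)) W^{(q)}(x)$ specialized to $\theta = \Phi(q+r)$ (where $\kappa(\Phi(q+r)) = q + r$), combined with the first equality in \eqref{Z_special}. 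One must be a little careful about the one-sided derivative $W^{(q+r)\prime}((x-y)+)$ and the behavior of $W^{(q)}$ near $0$ inside the integral — this is where the convention "$f'(x+)$" and Remark \ref{remark_smoothness_zero}(1) are used — but no substantive difficulty arises there.

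For the second equality, the cleanest route is to differentiate $\Psi(x-b, z)$ in its \emph{second} argument $z$ near $y - b$. Writing $\Psi(x-b, z) = W_z^{(q,r)}(x-b) - \tfrac{\Phi(q+r)}{q+r} Z^{(q)}(x-b, \Phi(q+r)) Z^{(q+r)}(-z)$ and using the second line of \eqref{W_a_def} again, with $z < 0$,
\begin{align*}
W_z^{(q,r)}(x-b) = W^{(q)}(x-b-z) + r \int_0^{-z} W^{(q)}(x-b-u-z) W^{(q+r)}(u) \diff u,
\end{align*}
I would differentiate in $z$. The first term contributes $-W^{(q)\prime}(x-b-z)$; the integral contributes, via the Leibniz rule, a boundary term at $u = -z$ equal to $-r W^{(q)}(x-b) W^{(q+r)}(-z)$ plus $-r \int_0^{-z} W^{(q)\prime}(x-b-u-z) W^{(q+r)}(u)\diff u$; and $\partial_z Z^{(q+r)}(-z) = -(q+r) W^{(q+r)}(-z)$. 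Setting $z = y-b$ and changing variables in the remaining integral (again to run over $[b,x]$) should, after the cancellation of the two terms proportional to $r W^{(q+r)}(b-y)$ (one from the boundary term, one from $Z^{(q+r)}$), reproduce exactly $-1$ times the expression in \eqref{common_derivative}. The key algebraic coincidence making both computations land on the same answer is that $W_{y-b}^{(q,r)}(x-b)$ appears, differentiated, in both, and the "correction" terms involving $Z^{(q)}(\cdot,\Phi(q+r))$ and $W^{(q+r)}(b-y)$ match up because of \eqref{Z_special}.

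The main obstacle I anticipate is purely bookkeeping: getting the constant factors and the $r$-weighted boundary terms to cancel correctly, and handling the one-sided derivatives and the jump of $W^{(q)\prime}$ (or lack of continuity of $W^{(q)}$) at the origin when the integration variable passes through points where the argument of a scale function vanishes. In particular one should check that the identity is genuinely an equality of one-sided derivatives as written — differentiating $\Upsilon(z,y-b)$ from the right in $z$ at $x-b$ matches differentiating $\Psi(x-b,z)$ from the left in $z$ at $y-b$ — which forces a consistent choice of $W^{(q+r)\prime}((x-y)+)$ versus $W^{(q+r)\prime}((x-y)-)$; this sign/side reconciliation, rather than any deep fact, is the delicate point. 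Everything else is a direct application of the Laplace-transform definition \eqref{scale_function_laplace}, the formulas \eqref{Z2}–\eqref{W_a_def}, and \eqref{Z_special}.
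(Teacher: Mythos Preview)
Your overall strategy --- differentiate the explicit formulas \eqref{fun_ups} and \eqref{fun_psi} via \eqref{W_a_def} --- is exactly what the paper does. The gap is in \emph{which} representation of $W_y^{(q,r)}$ you pick. You propose the \emph{second} line of \eqref{W_a_def} in both cases; but differentiating that line, whether in its first or second argument, produces a leading term $W^{(q)\prime}((x-y)\pm)$ and an integral of the form $\int_0^{b-y} W^{(q)\prime}(\,\cdot\,)W^{(q+r)}(u)\,\diff u$. No shift of integration variable converts this into the displayed right-hand side of \eqref{common_derivative}, whose leading term is $W^{(q+r)\prime}((x-y)+)$ and whose integrand is $W^{(q)}(x-z)W^{(q+r)\prime}(z-y)$. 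To bridge the two forms you would need to \emph{differentiate the identity} equating the two lines of \eqref{W_a_def} (i.e.\ the relation from \cite{LRZ}), which you have not planned for.

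Relatedly, your asserted cancellation in the $\Psi$ computation is not correct: the Leibniz boundary term is $-r\,W^{(q)}(x-b)\,W^{(q+r)}(b-y)$ while $\partial_z Z^{(q+r)}(-z)$ contributes $+\Phi(q+r)\,Z^{(q)}(x-b,\Phi(q+r))\,W^{(q+r)}(b-y)$; these are not proportional and do not cancel. The paper instead uses the \emph{first} line of \eqref{W_a_def}. For $\Psi$, differentiating that line in the subscript $y$ immediately yields $-W^{(q+r)\prime}((x-y)+)+r\int_b^x W^{(q)}(x-z)W^{(q+r)\prime}(z-y)\,\diff z$, and the $Z^{(q+r)}$ term then supplies the remaining piece directly. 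For $\Upsilon$, differentiating the first line in $z$ and integrating by parts gives \eqref{aux_1_app}, and the extra term $-rW^{(q)}(x-b)W^{(q+r)}(b-y)$ there is exactly cancelled by the $-rW^{(q)}(z)$ part of $\partial_z Z^{(q)}(z,\Phi(q+r)) = \Phi(q+r)Z^{(q)}(z,\Phi(q+r)) - rW^{(q)}(z)$. So the cancellation you anticipated is real, but it lives in the $\Upsilon$ calculation, not the $\Psi$ one, and only becomes visible once you work from the first representation.
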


\begin{remark}\label{lemma_limit_func_f}
 By Lemma \ref{A3_fin} and Proposition \ref{prop_v_b}, we must have $\lim_{y \downarrow -\infty} f(y)H^{(q+r)} (b-y,\theta)  = 0$ for $\theta \geq 0$ and $\lim_{y \downarrow -\infty} f(y) \Upsilon(x-b,y-b)  = 0$.
  In addition because 
 \begin{align} \label{Psi_Gamma_diff}
 \Psi(x-b,y-b)=\Upsilon(x-b,y-b)-\frac{\Phi(q+r)}{q+r}Z^{(q)}(x-b,\Phi(q+r)) H^{(q+r)}(b-y),
 \end{align}
  we also have $\lim_{y \downarrow -\infty} f(y)  \Psi(x-b,y-b) = 0$.
\end{remark}
 \begin{lemma}\label{A2}
 	Fix $x, b\in\R$. We can choose $-M\leq b \wedge x$ sufficiently small so that 
 	\begin{align*}
 		\frac{\partial}{\partial x}\int_{-\infty}^{-M}f(y)&
 		\Upsilon(x-b,y-b)%
 		\diff y
 		=\int_{-\infty}^{-M}f(y)\frac{\partial}{\partial x} \Upsilon(x-b,y-b)
 		\diff y.
 	\end{align*}
 \end{lemma}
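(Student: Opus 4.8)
The plan is to justify differentiating under the integral sign in $\int_{-\infty}^{-M} f(y)\Upsilon(x-b,y-b)\diff y$ by exhibiting, on a neighborhood of the given $x$, an integrable dominating function for the $x$-derivative of the integrand. Throughout I fix $x,b\in\R$ and work with $x$ ranging over a bounded interval $[x-\delta, x+\delta]$; I will choose $M$ large at the end, depending on this interval. The derivative $\frac{\partial}{\partial x}\Upsilon(z,y-b)$ is given in closed form by Lemma \ref{lemma_Psi_Upsilon} (identity \eqref{common_derivative}), so the task reduces to bounding that explicit expression, uniformly in $x$ on the interval, by something in $L^1((-\infty,-M), |f(y)|\diff y)$; since $f$ has at most polynomial growth (Assumption \ref{assump_f}(i)), it suffices to bound $|\frac{\partial}{\partial x}\Upsilon(x-b,y-b)|$ by a polynomial in $|y|$ (times constants depending only on $x,b,\delta$) as $y\downarrow-\infty$.

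The key steps, in order: (i) Recall from Remark \ref{remark_smoothness_zero}(3) that $W^{(q)}(u)\le \kappa'(\Phi(q))^{-1} e^{\Phi(q)u}$ and $W^{(q+r)}(u)\le \kappa'(\Phi(q+r))^{-1} e^{\Phi(q+r)u}$ for $u\ge 0$ (and $W^{(q)}(u)=W^{(q+r)}(u)=0$ for $u<0$); similarly the left/right derivatives $W^{(q)\prime}, W^{(q+r)\prime}$, which exist off $0$ by Remark \ref{remark_smoothness_zero}(1), satisfy analogous exponential bounds (this follows, e.g., from the log-convexity/standard asymptotics of the scale function, or by using $W^{(q)\prime}(u)\le \Phi(q)W^{(q)}(u)+ (\text{correction})$ — in any case a clean exponential bound $W^{(q+r)\prime}(u)\le c_1 e^{\Phi(q+r)u}$ for $u$ away from $0$ is available, and the single point $u=0$ is harmless inside an integral). (ii) Use these to bound each of the three terms on the right side of \eqref{common_derivative}. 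For $y<-M$ and $x\in[x-\delta,x+\delta]$, the arguments $x-y$, $z-y$ (for $z\in[b,x]$), and $b-y$ are all bounded below by $|y|-$const and above by $|y|+$const, so $W^{(q+r)\prime}((x-y)+)$, the integral term $r\int_b^x W^{(q)}(x-z)W^{(q+r)\prime}(z-y)\diff z$, and $\Phi(q+r)W^{(q+r)}(b-y)Z^{(q)}(x-b,\Phi(q+r))$ are each bounded by a constant times $e^{\Phi(q+r)|y|}$ — wait, that grows exponentially, so a crude bound is not enough.

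The main obstacle, therefore, is exactly this cancellation: individually the terms in \eqref{common_derivative} blow up like $e^{\Phi(q+r)|y|}$, but their sum must decay (polynomially, in fact like $W^{(q+r)}(-y)$ times a bounded factor — cf. Remark \ref{lemma_limit_func_f} which already records $f(y)\Upsilon(x-b,y-b)\to 0$). The right way to see the cancellation is to go back to the representation \eqref{def_Upsilon}, $\Upsilon(x-b,y-b) = -\Theta^{(q+r)}(x-b,y-b) + r\int_0^{x-b}W^{(q)}(x-b-z)\Theta^{(q+r)}(z,y-b)\diff z$, and differentiate that. By \eqref{Theta_def}, $\Theta^{(q+r)}(z,y-b)=e^{\Phi(q+r)z}W^{(q+r)}(b-y)-W^{(q+r)}(z-y)$, whose $x$-dependence is only through the upper endpoint $z=x-b$ of the integral and through the first ($\Theta$) term; the derivative is thus a finite combination of $W^{(q+r)}(b-y)$, $W^{(q+r)}(x-y)$, $W^{(q+r)\prime}(x-y)$, and $\int_0^{x-b}W^{(q)\prime}(\cdot)\Theta^{(q+r)}(z,y-b)\diff z$, and crucially the Lemma \ref{A1_new version}(i) bound already gives $\sup_{z\in[0,x-b]}|\Theta^{(q+r)}(z,y-b)|$ integrable against $|f(y)|$, while $W^{(q+r)}(b-y)$ and $W^{(q+r)}(x-y)$ appear with matching signs that reproduce $\Theta$-type differences. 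So I would organize the bound around $|\Theta^{(q+r)}(z,y-b)|$ (for $z$ in $[0,x-b]$, controlled by Lemma \ref{A1_new version}(i)) and its derivative, rather than around the expanded form \eqref{common_derivative}; the dominating function is then $|f(y)|$ times a constant (uniform over $x\in[x-\delta,x+\delta]$) times $\big(\sup_{z\in[0,x+\delta-b]}|\Theta^{(q+r)}(z,y-b)| + W^{(q+r)}(x+\delta-y)\big)$, which is integrable on $(-\infty,-M]$ by Lemmas \ref{A1_new version} and \ref{A3_fin} once $M$ is large. With the dominating function in hand, the standard dominated-convergence criterion for differentiating under the integral sign (e.g.\ via the mean value theorem applied to difference quotients) yields the claimed interchange, completing the proof.
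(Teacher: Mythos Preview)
Your proposal contains a genuine gap: the dominating function you propose is not integrable. You claim that
\[
|f(y)|\Big(\sup_{z\in[0,x+\delta-b]}|\Theta^{(q+r)}(z,y-b)|+W^{(q+r)}(x+\delta-y)\Big)
\]
is in $L^1((-\infty,-M])$ ``by Lemmas \ref{A1_new version} and \ref{A3_fin}'', but neither lemma says anything about $|f(y)|\,W^{(q+r)}(x+\delta-y)$. Lemma \ref{A3_fin} concerns $H^{(q+r)}(b-y,\theta)=\E_{b-y}[e^{-(q+r)\tau_0^-+\theta X(\tau_0^-)}]\le 1$, whereas $W^{(q+r)}(x+\delta-y)\sim \kappa'(\Phi(q+r))^{-1}e^{\Phi(q+r)(x+\delta-y)}$ as $y\to-\infty$, so $|f(y)|\,W^{(q+r)}(x+\delta-y)$ grows like a polynomial times $e^{\Phi(q+r)|y|}$ and diverges. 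In other words, you correctly diagnosed that the three individual summands in \eqref{common_derivative} blow up and that cancellation is needed, but your second attempt does not actually produce that cancellation: the derivative $\partial_x\Theta^{(q+r)}(x-b,y-b)=\Phi(q+r)e^{\Phi(q+r)(x-b)}W^{(q+r)}(b-y)-W^{(q+r)\prime}(x-y)$ is \emph{not} a $\Theta$-type difference, and you end up bounding it by something of the same exponential order.

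The idea you are missing is the decomposition
\[
\partial_x\Theta^{(q+r)}(x-b,y-b)=\Phi(q+r)\,\Theta^{(q+r)}(x-b,y-b)-\underline r^{(q+r)}(x-y),
\qquad \underline r^{(q+r)}(u):=W^{(q+r)\prime}(u)-\Phi(q+r)W^{(q+r)}(u),
\]
which makes the cancellation explicit: the first piece is handled by Lemma \ref{A1_new version}(i), and the second is (up to a constant) the density of $-\underline X(\mathbf e_{q+r})$ on $(0,\infty)$ by \eqref{density_running_min}, hence integrable against any polynomial via Assumption \ref{assump_levy_measure}. The paper implements exactly this via the equivalent factorisation $W^{(q+r)}(u)=e^{\Phi(q+r)u}W_{\Phi(q+r)}(u)$ (Remark \ref{remark_smoothness_zero}(3)): it writes the difference quotient of $\Theta^{(q+r)}$ as in \eqref{der-int_3}, with one piece proportional to $\Theta^{(q+r)}$ itself and the other involving only increments of the bounded monotone function $W_{\Phi(q+r)}$, whose integrability against $|f(y)|e^{-\Phi(q+r)y}$ is taken from \cite{HPY}. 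A secondary point you gloss over is the convolution term $\int_0^{x-b}W^{(q)}(x-b-z)\Theta^{(q+r)}(z,y-b)\,\diff z$ when $x>b$: differentiating under the $z$-integral involves $W^{(q)\prime}$ near $0$, which can blow up when $\sigma=0$ and $\Pi(-\infty,0)=\infty$ (Remark \ref{remark_smoothness_zero}(2)); the paper handles this with a careful $\delta$-splitting of the integration domain rather than a single dominated-convergence bound.
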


 Using Lemmas \ref{lemma_Psi_Upsilon} and \ref{A2}, we obtain the results regarding the first derivative of $v_b$.
\begin{lemma} \label{lemma_derivatives} 
Fix $b, x \in \R$. 
(i) We have 
\begin{align}	
 \label{v_b_prime}
	\begin{split} 
		v_b'(x) &=  (q F(b) - f(b)) \frac  {\Phi(q+r)} {q+r}    Z^{(q)}(x-b, \Phi(q+r))    -  \int_b^x W^{(q)}(x-y) f'(y) \diff y\\
		&- \int_{-\infty}^{b} f'(y) 
			\Psi(x-b,y-b)
			\diff y  
		- \frac{Cr}{q+r} Z^{(q)} (x-b).
	\end{split}
\end{align}	

(ii) We have
\begin{align*}
\begin{split}
\E_x\Big[\int_0^{\infty}e^{-qt}&f'(U_r^b(t))\diff t\Big]-v'_b(x) = \left(Z^{(q,r)}(x-b)\frac{q+r}{q}\frac{\Phi(q)}{\Phi(q+r)}-Z^{(q)}(x-b,\Phi(q+r))\right)M^{(q,r)}(b), 
\end{split}
\end{align*}
where 
\begin{align}  \label{M_def}
\begin{split}
M^{(q,r)}(b) &:= \frac  { \Phi(q+r)- \Phi(q)} {r}   \int_{-\infty}^\infty f'(y)
H^{(q+r)}(b-y, \Phi(q)) 
\diff y +\frac{q}{q+r}\frac{\Phi(q+r)}{\Phi(q)}C. 
\end{split}
\end{align}
\end{lemma}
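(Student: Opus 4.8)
The plan is to differentiate the expression for $v_b$ given in Proposition \ref{prop_v_b}, term by term. The formula \eqref{true_v_b} has three pieces: the term $F(b) Z^{(q,r)}(x-b)$, the integral term $-\int_{-\infty}^\infty f(y) \Upsilon(x-b,y-b)\diff y$, and the explicit term $-\frac{Cr}{q+r}\{\overline{Z}^{(q)}(x-b) + \kappa'(0+)/q\}$. For part (i), I would first note that $Z^{(q,r)}$ is a fixed combination of $Z^{(q)}$ and $Z^{(q)}(\cdot, \Phi(q+r))$, both of which are differentiable, with $\frac{d}{dx}Z^{(q)}(x-b) = qW^{(q)}(x-b)$ and $\frac{d}{dx}Z^{(q)}(x-b,\Phi(q+r)) = (q-\kappa(\Phi(q+r)))W^{(q)}(x-b) + \Phi(q+r)Z^{(q)}(x-b,\Phi(q+r))$, using \eqref{Z2}; but since $\kappa(\Phi(q+r)) = q+r$, the combination defining $Z^{(q,r)}$ should telescope so that only the clean expression $Z^{(q)}(x-b,\Phi(q+r))$ survives in the leading coefficient — this is why the coefficient $(qF(b) - f(b))\frac{\Phi(q+r)}{q+r}$ appears. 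The $f(b)$ term must come from the differentiation of the integral term at the "diagonal," i.e., from a Leibniz boundary contribution as explained below. For the explicit term, $\frac{d}{dx}\overline{Z}^{(q)}(x-b) = Z^{(q)}(x-b)$, giving the final summand $-\frac{Cr}{q+r}Z^{(q)}(x-b)$ directly.

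The heart of the argument — and the main obstacle — is differentiating $\int_{-\infty}^\infty f(y)\Upsilon(x-b,y-b)\diff y$ with respect to $x$. I would split the integral into $\int_{-\infty}^{-M}$ and $\int_{-M}^\infty$ for $-M \le b\wedge x$ small, handling the tail by Lemma \ref{A2} (which lets us pass the derivative inside on $(-\infty,-M)$) and the finite-range part by direct computation. On $(-M,\infty)$ one must be careful: $\Upsilon(x-b,y-b)$, via \eqref{fun_ups} and \eqref{W_a_def}, involves $W^{(q)}(x-y)$-type kernels that are only $C^1$ away from zero, so differentiating under the integral in $x$ produces both a principal term and a boundary term at $y = x$ coming from the kink of $W^{(q)}$ at the origin (recall $W^{(q)}(0)$ and $W^{(q)\prime}(0+)$ from Remark \ref{remark_smoothness_zero}). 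I would use the representation \eqref{def_Upsilon}, $\Upsilon(x-b,y-b) = -\Theta^{(q+r)}(x-b,y-b) + r\int_b^x W^{(q)}(x-z)\Theta^{(q+r)}(z-b,y-b)\diff z$, and differentiate: the convolution term contributes $rW^{(q)}(0)\Theta^{(q+r)}(x-b,y-b)$ plus $r\int_b^x W^{(q)\prime}(x-z)\Theta^{(q+r)}(z-b,y-b)\diff z$ (after an integration by parts / using smoothness), while $-\frac{\partial}{\partial x}\Theta^{(q+r)}(x-b,y-b)$ is explicit. Integrating against $f$ and integrating by parts in $y$ (moving the derivative onto $f$, which is licensed by Assumption \ref{assump_f}(i) and the decay from Lemma \ref{A3_fin}/Remark \ref{lemma_limit_func_f}) is what converts $\int f(y)\partial_x\Upsilon\,\diff y$ into $-\int f'(y)\Psi(x-b,y-b)\diff y - \int_b^x W^{(q)}(x-y)f'(y)\diff y$ plus the $f(b)$ boundary term; here Lemma \ref{lemma_Psi_Upsilon} is the key identity linking $\partial_z\Upsilon$ and $\partial_z\Psi$.

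For part (ii), I would apply part (i) with $f$ replaced by $f'$: the quantity $\E_x[\int_0^\infty e^{-qt}f'(U_r^b(t))\diff t]$ equals, by Theorem \ref{theorem_resolvents} (with $h$ replaced by $f'$, justified by Lemma \ref{A3_fin} applied to $f'$, which also satisfies Assumption \ref{assump_f}(i) since $f$ is convex with polynomial growth, hence $f'$ has polynomial growth), the expression $\int f'(y) r_b^{(q,r)}(x,y)\diff y$. Subtracting $v_b'(x)$ from \eqref{v_b_prime} and collecting terms, the $\int W^{(q)}(x-y)f'(y)\diff y$ and $\int f'(y)\Psi(\cdot)\diff y$ pieces should cancel against the $\Upsilon$-part of $r_b^{(q,r)}$ (using \eqref{Psi_Gamma_diff} to reconcile $\Psi$ and $\Upsilon$ plus the $H^{(q+r)}(b-y)$ correction, which integrates against $f'$ to reproduce part of $M^{(q,r)}(b)$), leaving only the two "$Z$-type" terms $Z^{(q,r)}(x-b)$ and $Z^{(q)}(x-b,\Phi(q+r))$ with the stated coefficient $M^{(q,r)}(b)$. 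The bookkeeping of constants — matching $\frac{q+r}{qr}\frac{\Phi(q)(\Phi(q+r)-\Phi(q))}{\Phi(q+r)}$ against $\frac{\Phi(q+r)}{q+r}$ times the coefficient in $F$, and tracking the $C$-dependent pieces from the controlling-cost term — is the routine-but-delicate part I would carry out carefully but not reproduce in full here. The main obstacle throughout is the boundary term at $y=x$ generated by the non-smoothness of $W^{(q)}$ at the origin, which is exactly what produces the $-f(b)$ contribution in \eqref{v_b_prime}.
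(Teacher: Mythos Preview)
Your plan is essentially the paper's own proof: differentiate \eqref{true_v_b} term by term, interchange derivative and integral on the tail via Lemma \ref{A2}, use Lemma \ref{lemma_Psi_Upsilon} to convert $\partial_x\Upsilon$ into $-\partial_y\Psi$, then integrate by parts in $y$; for (ii) apply Theorem \ref{theorem_resolvents} to $f'$, subtract, and use \eqref{Psi_Gamma_diff} to reconcile $\Psi$ with $\Upsilon$.

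One correction to your diagnosis: the $-f(b)$ in the leading coefficient does \emph{not} come from the kink at $y=x$. The $f(x)W^{(q+r)}(0)1_{\{x<b\}}$ contributions produced at $y=x$ in fact cancel between the two integration-by-parts steps (once when differentiating $\int_{-\infty}^b f(y)\Upsilon\,\diff y$ in $x$, once when integrating $\int_{-\infty}^b f(y)\partial_y\Psi\,\diff y$ by parts in $y$, since $\Upsilon$ and $\Psi$ share the same jump $-W^{(q+r)}(0)$ at $y=x$). The $-f(b)$ arises from the boundary at $y=b$: combining $-f(b)W^{(q)}(x-b)$ from $\frac{\partial}{\partial x}\int_b^\infty f(y)W^{(q)}(x-y)\diff y$ with $+f(b)\Psi(x-b,0)=f(b)W^{(q)}(x-b)-f(b)\tfrac{\Phi(q+r)}{q+r}Z^{(q)}(x-b,\Phi(q+r))$ leaves exactly $-f(b)\tfrac{\Phi(q+r)}{q+r}Z^{(q)}(x-b,\Phi(q+r))$.

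For part (ii), the paper makes explicit one step you fold into ``bookkeeping'': rewriting $F(b)$ in terms of integrals of $f'$ by integrating the defining $H^{(q+r)}(b-y,\Phi(q))$ integral by parts in $y$ (this is equation \eqref{new_c}). Without this rewrite you cannot directly compare the $(qF(b)-f(b))$ coefficient in \eqref{v_b_prime} with the resolvent expression for $f'$, so it is a genuine (if short) step, not merely constant-matching.
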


\begin{proof}
(i) By integration by parts, for $x \neq b$,
	\begin{align}
		\frac \partial {\partial x} \int_b^\infty f(y)    W^{(q)}(x-y)  \diff y 
		= f(b) W^{(q)}(x-b) + \int_b^x W^{(q)}(x-y) f'(y) \diff y.\label{int_parts_new}
	\end{align}
Differentiating \eqref{true_v_b} and using \eqref{int_parts_new} and Lemma \ref{A2}
 (with which the derivative can be interchanged over the integral), and that $\Upsilon(x-b,y-b)|_{y = x+}- \Upsilon(x-b,y-b)|_{y=x-} = -W^{(q+r)}(0)$, for $x \neq b$,
\begin{align*}
v_b'(x) &= F(b) Z^{(q,r)\prime}(x-b) - f(b) W^{(q)}(x-b) - \int_b^x W^{(q)}(x-y) f'(y) \diff y  \\
&- \int_{-\infty}^b f(y)  
\frac \partial {\partial x}\Upsilon(x-b,y-b) 
\diff y  -f(x)W^{(q+r)}(0)1_{\{x<b\}} - \frac{C r}{q+r} Z^{(q)} (x-b).
\end{align*}
By  Lemma \ref{lemma_Psi_Upsilon}, Remark \ref{lemma_limit_func_f} and integration by parts and noting that $\Psi(x-b,y-b)|_{y = x+}- \Psi(x-b,y-b)|_{y=x-} = -W^{(q+r)}(0)$,
\begin{multline*}
\int_{-\infty}^b f(y)  \frac \partial {\partial x}\Upsilon(x-b,y-b)
 \diff y = - \int_{-\infty}^b f(y)  \frac \partial {\partial y}\Psi(x-b,y-b) 
 \diff y \\
 = - f(b) \Psi(x-b, 0)+\int_{-\infty}^b f'(y)  \Psi(x-b,y-b) 
 \diff y -f(x)W^{(q+r)}(0)1_{\{x<b\}},
\end{multline*}
where $\Psi(x-b,0) = W^{(q)}(x-b) - \frac{\Phi(q+r)}{q+r}Z^{(q)}(x-b,\Phi(q+r)).$ This together with $Z^{(q,r) \prime}(x-b) =\frac{q}{q+r}  \Phi(q+r) Z^{(q)}(x-b, \Phi(q+r))$ shows \eqref{v_b_prime}. 

For the case $x=b$, following the same computation for the right- and left-hand derivatives, 
it can be confirmed that they both match with \eqref{v_b_prime}.

(ii) 
Integration by parts gives 
\begin{align} \label{int_by_parts_f}
\int_b^{\infty}f(y)e^{-\Phi(q)(y-b)}\diff y= \Big( f(b)+\int_b^{\infty}f'(y)e^{-\Phi(q)(y-b)} \diff y \Big) /\Phi(q),
\end{align}
and by noticing that $\overline{H}^{(q+r)}(z) := \big(H^{(q+r)}(z, \Phi(q))  -\frac{r}{q+r} \frac{\Phi(q+r)}{\Phi(q+r)-\Phi(q)} H^{(q+r)}(z)\big) / {\Phi(q)},$ $z \in \R$, is an antiderivative of $H^{(q+r)} (\cdot, \Phi(q))$ and by Remark \ref{lemma_limit_func_f},
\begin{align*}
\int_{-\infty}^bf(y)
H^{(q+r)}(b-y, \Phi(q)) 
\diff y
&=-\frac{f(b)}{\Phi(q)}\left(1-\frac{r}{q+r}\frac{\Phi(q+r)}{\Phi(q+r)-\Phi(q)}\right) \\ &+ \int_{-\infty}^bf'(y)\overline{H}^{(q+r)}(b-y) \diff y.
\end{align*}
Therefore using the previous identities in \eqref{C(b)} together with Remark \ref{rem_y<0}(i), we obtain
\begin{align}\label{new_c}
\begin{split}
F(b)&=\frac{1}{q} \Big( f(b) - \int_{-\infty}^bf'(y)
H^{(q+r)}(b-y) \diff y \Big) \\
&+\frac{\Phi(q+r)-\Phi(q)}{\Phi(q+r)}\left(\frac{q+r}{qr}\int_{-\infty}^{\infty}f'(y)
H^{(q+r)}(b-y, \Phi(q)) 
\diff y +\frac{C}{\Phi(q)}\right).
\end{split}
\end{align}

Now using \eqref{v_b_prime} and Theorem \ref{theorem_resolvents}  together with  \eqref{Psi_Gamma_diff} and Remark \ref{rem_y<0}(ii), we obtain that 
\begin{align*}
\E_x&\left[\int_0^{\infty}e^{-qt}f'(U_r^b(t))\diff t\right]-v'_b(x) \notag\\
&= 
\frac{Cr}{q+r} Z^{(q)} (x-b) + 
  \frac  {q+r} {qr} \frac  {\Phi(q) (\Phi(q+r)- \Phi(q))} {\Phi(q+r)}  \int_{-\infty}^{\infty}  f'(y)  Z^{(q,r)} (x-b) 
H^{(q+r)}(b-y, \Phi(q)) 
 \diff y \\
 &+\frac{\Phi(q+r)}{q+r}Z^{(q)}(x-b,\Phi(q+r))  \Big( f(b) - \int_{-\infty}^b f'(y)
H^{(q+r)}(b-y) 
 \diff y - q F(b) \Big),
\end{align*}
which shows (ii)
 by \eqref{new_c}.
\end{proof}

From \eqref{Psi_Gamma_diff}, $\Psi(0,y-b)=-\frac{\Phi(q+r)}{q+r}H^{(q+r)}(b-y)$. Hence using \eqref{v_b_prime} and \eqref{new_c},  for any $b\in\R$,
\begin{align} \label{v_b_prime_at_b}
\begin{split}
		v_b'(b) &= - \frac{Cr}{q+r} +  \frac  {\Phi(q+r)} {q+r} \Big(-\int_{-\infty}^bf'(y)
		H^{(q+r)}(b-y) \diff y+q\frac{\Phi(q+r)-\Phi(q)}{\Phi(q+r)}\Big(\frac{C}{\Phi(q)}\\&+\frac{q+r}{qr}\int_{-\infty}^{\infty}f'(y)
		H^{(q+r)}(b-y, \Phi(q)) \diff y \Big)\Big) 
		+ \frac{\Phi(q+r)}{q+r}\int_{-\infty}^{b} f'(y) H^{(q+r)}(b-y)\diff y
		\\
		&=M^{(q,r)}(b)-C.
		\end{split}
\end{align}	
In view of this and Remark \ref{remark_verificatin_lemma}(2), our natural selection of the candidate barrier $b^*$ is such that $M^{(q,r)}(b^*) = 0$. With this choice, the following is immediate by Lemma  \ref{lemma_derivatives}(ii).
\begin{lemma} \label{v_prime_matches} 
	If $b^* \in \R$ is such that $M^{(q,r)}(b^*) = 0$, then 
$v'_{b^*}(x) = \E_x [\int_0^{\infty}e^{-qt}f'(U_r^{b^*}(t))\diff t ]$ for  $x \in \R$. 
\end{lemma}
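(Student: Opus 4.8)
The plan is to read off the claim as an immediate corollary of Lemma \ref{lemma_derivatives}(ii), so essentially no new work is required beyond substitution. By Lemma \ref{lemma_derivatives}(ii), for any $b \in \R$ and any $x \in \R$ we have
\begin{align*}
\E_x\Big[\int_0^{\infty}e^{-qt}f'(U_r^b(t))\diff t\Big]-v'_b(x) = \Big(Z^{(q,r)}(x-b)\tfrac{q+r}{q}\tfrac{\Phi(q)}{\Phi(q+r)}-Z^{(q)}(x-b,\Phi(q+r))\Big)M^{(q,r)}(b).
\end{align*}
The plan is then simply to specialize this to $b = b^*$ and invoke the defining property $M^{(q,r)}(b^*) = 0$: the entire right-hand side vanishes identically in $x$, leaving $v_{b^*}'(x) = \E_x[\int_0^\infty e^{-qt} f'(U_r^{b^*}(t))\diff t]$ for all $x \in \R$, which is exactly the assertion.

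Before writing this one-line deduction, I would make sure the ingredients it rests on are in place. First, the existence of $b^*$ with $M^{(q,r)}(b^*) = 0$ must already be established (this is the stated goal of Section \ref{section_selection_barrier}); here I am allowed to assume it, since the statement of Lemma \ref{v_prime_matches} is conditional (``If $b^* \in \R$ is such that $M^{(q,r)}(b^*) = 0$''). Second, I should confirm that all three objects appearing in the identity — the resolvent $\E_x[\int_0^\infty e^{-qt} f'(U_r^{b^*}(t))\diff t]$, the value function derivative $v_{b^*}'(x)$, and the coefficient $M^{(q,r)}(b^*)$ — are genuinely finite and well-defined, so that the cancellation is legitimate rather than an indeterminate $0 \cdot \infty$. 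Finiteness of $v_b$ and its polynomial growth come from Proposition \ref{prop_v_b} and Lemma \ref{lemma_v_b_polynomial}; differentiability (hence the meaning of $v_b'$) is furnished by Lemma \ref{lemma_derivatives}(i); finiteness of the $f'$-resolvent follows from Remark \ref{remark_finiteness_resolvents} applied to $g = f'$ together with the bound \eqref{U_bound} controlling $U_r^{b^*}(t) - X(t)$, since $f'$ inherits polynomial tail growth from the convex, polynomially-growing $f$ of Assumption \ref{assump_f}; and $M^{(q,r)}(b)$ is finite for every $b$ by Lemma \ref{A3_fin} (applied to $f'$). With these checks in hand the coefficient multiplying $M^{(q,r)}(b^*)$ is a finite quantity for each fixed $x$, so setting $M^{(q,r)}(b^*) = 0$ kills the right-hand side unambiguously.

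There is no real obstacle here — the content has been front-loaded into Lemma \ref{lemma_derivatives}(ii), whose proof (deferred) is where the actual fluctuation-identity bookkeeping lives. The only thing to be careful about is the logical ordering: the proof of Lemma \ref{v_prime_matches} should not silently presuppose more than the hypothesis ``$M^{(q,r)}(b^*) = 0$'' grants it, and in particular it should not be used to claim existence of $b^*$ (that is a separate matter, handled via the monotonicity/limit analysis of $b \mapsto M^{(q,r)}(b)$ suggested by Remark \ref{remark_verificatin_lemma}(2) and equation \eqref{v_b_prime_at_b}, which identifies $v_b'(b) = M^{(q,r)}(b) - C$). So the proof is: invoke Lemma \ref{lemma_derivatives}(ii) at $b = b^*$, note the hypothesis forces $M^{(q,r)}(b^*) = 0$, conclude the difference is zero for every $x \in \R$, and we are done.
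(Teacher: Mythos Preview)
Your proposal is correct and matches the paper's approach exactly: the paper states that Lemma \ref{v_prime_matches} is immediate from Lemma \ref{lemma_derivatives}(ii), and your substitution $b=b^*$ with $M^{(q,r)}(b^*)=0$ is precisely that deduction. The additional finiteness checks you outline are sound but go beyond what the paper records.
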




\subsection{Existence of the optimal barrier $b^*$.}\label{opt_sec}

We first show the following two lemmas. The proof of the first lemma is deferred to Appendix \ref{proof_A3_der}.
\begin{lemma}\label{A3_der}
Fix $b \in\R$ and $\theta \geq 0$.  We can choose $-M< b$ sufficiently small so that
	\begin{align*}
		\frac{\partial}{\partial b}\int_{-\infty}^{-M}& f(y)
		H^{(q+r)}(b-y, \theta)
		\diff y =\int_{-\infty}^{-M} f(y) \frac{\partial}{\partial b}
		H^{(q+r)}(b-y, \theta)
		\diff y.
	\end{align*}
\end{lemma}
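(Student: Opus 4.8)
The plan is to justify the interchange of the $\partial/\partial b$ derivative and the integral over $(-\infty,-M)$ by the standard dominated-convergence criterion for differentiation under the integral sign: it suffices to produce, for $b$ in a small neighbourhood of a fixed $b_0$ and for $y<-M$, an integrable (in $y$) bound on $|f(y)\,\partial_b H^{(q+r)}(b-y,\theta)|$ that is uniform in $b$. Since $f$ has at most polynomial growth (Assumption \ref{assump_f}(i)) and $-M$ can be taken very negative, the whole game reduces to controlling the size of $\partial_b H^{(q+r)}(b-y,\theta)$ for $b-y$ ranging over a set of the form $(-\infty,-c]$ with $c$ large, i.e.\ to a growth (decay) estimate on the derivative of $z\mapsto H^{(q+r)}(z,\theta)$ as $z\to-\infty$.

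First I would recall from \eqref{H_theta} that, for $z<0$, $H^{(q+r)}(z,\theta)=Z^{(q+r)}(z,\theta)$ because $W^{(q+r)}$ vanishes on $(-\infty,0)$; and by Remark \ref{rem_y<0}(i), for $z<0$ one has the explicit identity $H^{(q+r)}(z,\theta)=e^{\theta z}$. Hence, for $b-y<0$ (which holds for all $y<-M$ once $-M\le b\wedge x$ and, more to the point, $-M$ is small enough that $b<-M$ fails to be an issue — here we only need $b-y<0$ for $y<-M<b$), we simply have
\begin{align*}
H^{(q+r)}(b-y,\theta)=e^{\theta(b-y)},\qquad \frac{\partial}{\partial b}H^{(q+r)}(b-y,\theta)=\theta\,e^{\theta(b-y)},
\end{align*}
so the interchange is an elementary matter: for $b$ in a compact neighbourhood of $b_0$ and $y<-M$ one has $|\partial_b H^{(q+r)}(b-y,\theta)|\le \theta\,e^{\theta(b_0+1)}e^{-\theta y}$, and by Assumption \ref{assump_f}(i) together with the fact that $\theta\ge 0$, the product $|f(y)|\,e^{-\theta y}$ is integrable on $(-\infty,-M)$ (for $\theta>0$ because polynomial growth is dominated by the exponential decay $e^{\theta y}$ as $y\to-\infty$; for $\theta=0$ the derivative is identically zero and there is nothing to prove, or one simply notes $H^{(q+r)}(b-y,0)\equiv 1$ there). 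Differentiation under the integral sign (e.g.\ via the mean value theorem plus dominated convergence) then yields the claim, with $-M$ chosen merely so that $-M<b$, guaranteeing $b-y<0$ on the domain of integration. Actually one must be slightly careful: the statement fixes $b$ but asks to choose $-M<b$, so it is enough that $-M$ be strictly less than the given $b$; the neighbourhood in $b$ used for the difference-quotient argument can then be shrunk so that it stays below $-M$ as well.

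The only mild subtlety — and the step I would treat most carefully — is making sure the derivative formula is applied on the correct side of the kink: $H^{(q+r)}(\cdot,\theta)$ is smooth on $(-\infty,0)$ but $W^{(q+r)}$ contributes nonsmoothly at $0$, so one genuinely needs $b-y$ bounded away from $0$, which is exactly what $y<-M<b$ delivers (indeed $b-y>b+M>0$... wait, that is positive; the point is that for $y<-M$ we get $b-y=b+|y|$, and since we may further shrink the $b$-neighbourhood we can keep $b<-M$? No.). Let me restate: we have $b$ fixed, $y<-M$, and we need $H^{(q+r)}(b-y,\theta)$ to be given by its $z<0$ form, i.e.\ we need $b-y<0$, i.e.\ $y>b$ — but $y<-M<b$ gives the \emph{opposite}. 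So in fact $b-y>0$ on the domain, and the relevant formula is the full \eqref{H_theta} with the $W^{(q+r)}$ term present. In that regime one uses instead the asymptotics of Remark \ref{remark_smoothness_zero}(3), namely $e^{-\Phi(q+r)z}W^{(q+r)}(z)\to \kappa'(\Phi(q+r))^{-1}$ and the known fact that $H^{(q+r)}(z,\theta)$ and $\partial_z H^{(q+r)}(z,\theta)$ grow at most like $e^{\Phi(q+r)z}$ times a bounded factor as $z\to+\infty$; thus $|\partial_b H^{(q+r)}(b-y,\theta)|\le \mathrm{const}\cdot e^{\Phi(q+r)(b-y)}$, and here the relevant bound $|f(y)|\,e^{-\Phi(q+r)y}$ is \emph{not} integrable — so one instead expands $H^{(q+r)}$ via \eqref{H_theta}, \eqref{Z2} and cancels the $e^{\Phi(q+r)z}$ growth against the fact that $(q+r)-\kappa(\Phi(q+r))=0$, exactly as is done in the proof of Lemma \ref{A3_fin}; the derivative $\partial_b$ of the resulting expression is then a polynomially bounded (in $b-y$) combination of $W^{(q+r)}$, $W^{(q+r)\prime}$ and exponentials $e^{\theta(b-y)}$, against which $|f(y)|$ is integrable by Assumption \ref{assump_f}(i) and the exponential decay of the $e^{\theta y}$-type terms (for $\theta>0$) or the polynomial-times-polynomial integrability already exploited in Lemma \ref{A3_fin} (for $\theta=0$). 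This is the main obstacle: identifying the right cancellation so that the dominating function is integrable, after which uniformity in $b$ over a compact neighbourhood and the standard differentiation-under-the-integral theorem close the argument.
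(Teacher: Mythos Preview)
Your proposal has a genuine gap. After correctly realising that $y<-M<b$ forces $b-y>0$ (so the full formula \eqref{H_theta} with the $W^{(q+r)}$ term is in play), you never actually produce an integrable dominating function. Your claim that $\partial_b H^{(q+r)}(b-y,\theta)$ is ``a polynomially bounded (in $b-y$) combination of $W^{(q+r)}$, $W^{(q+r)\prime}$ and exponentials $e^{\theta(b-y)}$'' is false: each of $W^{(q+r)}(b-y)$, $W^{(q+r)\prime}(b-y)$, and $e^{\theta(b-y)}$ grows exponentially as $y\to-\infty$ (the first two like $e^{\Phi(q+r)(b-y)}$ by Remark \ref{remark_smoothness_zero}(3)), so $|f(y)|$ times any of them diverges and is certainly not integrable on $(-\infty,-M)$. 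The vague reference to ``cancelling the $e^{\Phi(q+r)z}$ growth against $(q+r)-\kappa(\Phi(q+r))=0$, exactly as in Lemma \ref{A3_fin}'' does not correspond to anything in the proof of that lemma, which instead bounds $H^{(q+r)}$ probabilistically by the tail of $-\underline{X}(\mathbf{e}_{q+r})$.

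What the paper does is compute the derivative explicitly as
\[
\partial_z H^{(q+r)}(z,\theta)\big|_{z=(b-y)+}=\theta\,H^{(q+r)}(b-y,\theta)-\frac{\kappa(\theta)-(q+r)}{\theta-\Phi(q+r)}\,\underline{r}^{(q+r)}((b-y)+),
\]
where $\underline{r}^{(q+r)}(z):=W^{(q+r)\prime}(z)-\Phi(q+r)W^{(q+r)}(z)$; this is the precise cancellation you were groping for. The point is that $(q+r)\underline{r}^{(q+r)}/\Phi(q+r)$ is the \emph{density} of $-\underline{X}(\mathbf{e}_{q+r})$ by \eqref{density_running_min}, so $\int_{-\infty}^{-M}|f(y)|\,\underline{r}^{(q+r)}(b_1-y)\,\diff y\le \frac{\Phi(q+r)}{q+r}\E[|f(b_1+\underline{X}(\mathbf{e}_{q+r}))|]<\infty$ via Assumption \ref{assump_levy_measure}. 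The first piece $\theta H^{(q+r)}(b-y,\theta)$ is bounded by $\theta\,\E_{b_1-y}[e^{-(q+r)\tau_0^-}]$ and handled by Lemma \ref{A3_fin}. Finally, uniformity in $b$ over a compact interval $[b_1,b_2]$ --- which you do not address --- uses that $z\mapsto \underline{r}^{(q+r)}(z+)/W^{(q+r)}(z)$ is decreasing, yielding $\underline{r}^{(q+r)}((b-y)+)\le K(b_1,b_2)\,\underline{r}^{(q+r)}((b_1-y)+)$.
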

%
\begin{lemma} \label{lemma_M_monotone}
For all $b \in \R$ at which $f'(b)$ exists,
\begin{align*}
(e^{-\Phi(q)b}M^{(q,r)}(b))' 
&=- e^{-\Phi(q)b} \frac{\Phi(q+r)}{q+r} \left(Cq + \E\left[f'(\underline{X}(\mathbf{e}_{q+r})+b) \right]\right).
\end{align*}
\end{lemma}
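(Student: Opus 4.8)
The plan is to differentiate the expression \eqref{M_def} for $M^{(q,r)}(b)$ (or rather, $e^{-\Phi(q)b}M^{(q,r)}(b)$) with respect to $b$, then recognize the resulting integral against $\frac{\partial}{\partial b}\big(e^{-\Phi(q)b}H^{(q+r)}(b-y,\Phi(q))\big)$ as an expectation with respect to the law of the running infimum $\underline{X}(\mathbf{e}_{q+r})$, using the density \eqref{density_running_min}. First I would write $e^{-\Phi(q)b}M^{(q,r)}(b) = \frac{\Phi(q+r)-\Phi(q)}{r}\int_{-\infty}^\infty f'(y)\,e^{-\Phi(q)b}H^{(q+r)}(b-y,\Phi(q))\diff y + \frac{q}{q+r}\frac{\Phi(q+r)}{\Phi(q)}C e^{-\Phi(q)b}$; the second term differentiates trivially to $-\frac{q}{q+r}\Phi(q+r)C e^{-\Phi(q)b}$, which produces the $Cq$ contribution after factoring $e^{-\Phi(q)b}\frac{\Phi(q+r)}{q+r}$.

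For the integral term, note from \eqref{H_simple} that $e^{-\Phi(q)b}H^{(q+r)}(b-y,\Phi(q)) = e^{-\Phi(q)y}\big(Z^{(q+r)}(b-y,\Phi(q)) e^{-\Phi(q)(b-y)} - \frac{r}{\Phi(q+r)-\Phi(q)}e^{-\Phi(q)(b-y)}W^{(q+r)}(b-y)\big)$; more usefully, the probabilistic identity $H^{(q+r)}(b-y,\Phi(q)) = \E_{b-y}[e^{-(q+r)\tau_0^- + \Phi(q)X(\tau_0^-)}1_{\{\tau_0^-<\infty\}}]$ combined with the Esscher-type change of measure (or directly with \eqref{density_running_min}) gives $e^{-\Phi(q)b}H^{(q+r)}(b-y,\Phi(q)) = e^{-\Phi(q)y}\,\p(-\underline{X}(\mathbf{e}_{q+r}) > b-y) + (\text{a boundary term})$. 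The cleanest route is to differentiate in $b$ directly: I expect $\frac{\partial}{\partial b}\big(e^{-\Phi(q)b}H^{(q+r)}(b-y,\Phi(q))\big)$ to collapse, via the explicit formulas \eqref{H_simple} and \eqref{Z_special} and the fact that $(e^{-\Phi(q+r)z}W^{(q+r)}(z))$ and $W^{(q)}$ have known derivatives, to $-r\,e^{-\Phi(q)y}\big[\frac{q+r}{\Phi(q+r)}W^{(q+r)}(\diff(b-y)) - (q+r)W^{(q+r)}(b-y)\diff(b-y)\big]/(\text{const})$ — i.e. precisely (a multiple of) the measure appearing in \eqref{density_running_min}. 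Justifying the interchange of $\frac{\partial}{\partial b}$ and $\int_{-\infty}^\infty$ is handled by splitting the integral at $-M$ and invoking Lemma \ref{A3_der} for the tail $(-\infty,-M)$ (applied to $f'$, which also satisfies the polynomial growth bound since $f$ is convex, so $f'$ is monotone and locally bounded), while the compact piece is routine.

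Assembling: after differentiation the integral term becomes $\frac{\Phi(q+r)-\Phi(q)}{r}\int_{-\infty}^\infty f'(y)\frac{\partial}{\partial b}\big(e^{-\Phi(q)b}H^{(q+r)}(b-y,\Phi(q))\big)\diff y$, and substituting the measure identity turns this into $-e^{-\Phi(q)b}\frac{\Phi(q+r)}{q+r}\int_{[0,\infty)} f'(u+b)\,\p(-\underline{X}(\mathbf{e}_{q+r})\in\diff u) = -e^{-\Phi(q)b}\frac{\Phi(q+r)}{q+r}\,\E[f'(\underline{X}(\mathbf{e}_{q+r})+b)]$ (using $\underline{X}(\mathbf{e}_{q+r}) = -(-\underline{X}(\mathbf{e}_{q+r})) \le 0$ and the change of variable $u = -\underline{X}(\mathbf{e}_{q+r})$, together with the fact that, by \eqref{int_by_parts_f}-type integration by parts, the apparent boundary term at $y=b$ coming from $f'$ cancels against the atom of $W^{(q+r)}(\diff\cdot)$ at $0$ when $X$ has bounded variation, and is absent otherwise). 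Combining with the $C$-term gives exactly the claimed formula. The main obstacle I anticipate is the differentiation-under-the-integral justification together with bookkeeping the boundary/atom terms — verifying that the atom at $0$ in $W^{(q+r)}(\diff y)$ (present iff $X$ is of bounded variation, where $W^{(q+r)}(0)=1/c>0$) is exactly what is needed to make $\E[f'(\underline{X}(\mathbf{e}_{q+r})+b)]$ (a genuine expectation over the closed half-line, including the atom of $\underline{X}(\mathbf{e}_{q+r})$ at $0$) come out with no leftover terms; this requires care but no deep idea, since the density \eqref{density_running_min} already encodes the atom.
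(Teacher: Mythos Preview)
Your approach is correct and essentially the same as the paper's: both differentiate the integral in \eqref{M_def}, justify the interchange via Lemma \ref{A3_der}, and recognize the result through the density \eqref{density_running_min}. The only organizational difference is that the paper differentiates $M^{(q,r)}(b)$ directly, obtaining $M^{(q,r)\prime}(b)=\Phi(q)M^{(q,r)}(b)-\frac{\Phi(q+r)}{q+r}\big(Cq+\E[f'(\underline{X}(\mathbf{e}_{q+r})+b)]\big)$ and then rearranges, whereas you multiply by $e^{-\Phi(q)b}$ first so that the $\Phi(q)M^{(q,r)}(b)$ term is absorbed automatically. Your handling of the atom is also right in substance, though the phrasing could be sharper: the boundary contribution at $y=b$ arises because $H^{(q+r)}(\cdot,\Phi(q))$ itself jumps at $0$ (its value there is $1-\frac{rW^{(q+r)}(0)}{\Phi(q+r)-\Phi(q)}$, not $1$) in the bounded variation case, and this jump, once multiplied by $\frac{\Phi(q+r)-\Phi(q)}{r}$, gives exactly $-f'(b)e^{-\Phi(q)b}W^{(q+r)}(0)$, which is the atom of $-\underline{X}(\mathbf{e}_{q+r})$ at $0$ needed to complete the expectation.
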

\begin{proof}
Using Lemma \ref{A3_der} together with Remark \ref{rem_y<0}(i) and \eqref{density_running_min},
\begin{align*}
M^{(q,r) \prime}(b)&=
		\frac  {\Phi(q+r)- \Phi(q)} {r}
		\Bigg[-f'(b)+\Phi(q)
		\int_b^{\infty}f'(y)e^{-\Phi(q)(y-b)}\diff y+f'(b)\left(1-\frac{r W^{(q+r)}(0)}{\Phi(q+r)-\Phi(q)}\right)\\
		&+\int_{-\infty}^bf'(y)\left(\Phi(q)Z^{(q+r)}(b-y,\Phi(q))+rW^{(q+r)}(b-y)-\frac{r W^{(q+r)\prime}(b-y)}{\Phi(q+r)-\Phi(q)}\right)\diff y\Bigg]\\
&=\Phi(q)M^{(q,r)}(b)-\frac{\Phi(q+r) }{q+r} \Big( q C+ \E\left[f'(\underline{X}(\mathbf{e}_{q+r})+b) \right] \Big).
\end{align*}
By this, the desired result is immediate.
\end{proof}


\begin{proposition} 
There exists a unique $b^*$ such that $M^{(q,r)}(b^*) = 0$.
\end{proposition}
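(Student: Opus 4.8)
The plan is to establish existence and uniqueness of the root of $M^{(q,r)}$ by combining the derivative formula from Lemma \ref{lemma_M_monotone} with a limiting analysis of $M^{(q,r)}(b)$ as $b \to \pm \infty$. The key observation is that Lemma \ref{lemma_M_monotone} tells us the sign of $(e^{-\Phi(q)b}M^{(q,r)}(b))'$ is controlled by the sign of $Cq + \E[f'(\underline{X}(\mathbf{e}_{q+r})+b)]$, and since $\underline{X}(\mathbf{e}_{q+r}) \leq 0$ a.s.\ and $f$ is convex (so $f'$ is nondecreasing), the quantity $b \mapsto \E[f'(\underline{X}(\mathbf{e}_{q+r})+b)]$ is nondecreasing in $b$. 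Therefore $(e^{-\Phi(q)b}M^{(q,r)}(b))'$ changes sign at most once (from positive to negative as $b$ increases), which forces $e^{-\Phi(q)b}M^{(q,r)}(b)$ to be unimodal.

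First I would verify that $\E[f'(\underline{X}(\mathbf{e}_{q+r})+b)]$ interpolates, as $b$ ranges over $\R$, between values below $-Cq$ and values above $-Cq$: using Assumption \ref{assump_f}(ii), namely $f'(-\infty) < -Cq < f'(\infty)$, together with monotone/dominated convergence applied to the law \eqref{density_running_min} of $\underline{X}(\mathbf{e}_{q+r})$, one gets $\lim_{b\to-\infty}\E[f'(\underline{X}(\mathbf{e}_{q+r})+b)] = f'(-\infty) < -Cq$ and $\lim_{b\to\infty}\E[f'(\underline{X}(\mathbf{e}_{q+r})+b)] = f'(\infty) > -Cq$ (the latter using that $\underline{X}(\mathbf{e}_{q+r})$ is a.s.\ finite, which follows from $\kappa'(0+) > -\infty$ under Assumption \ref{assump_levy_measure}). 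Hence there is a threshold $\underline{b}$ below which $Cq + \E[f'(\underline{X}(\mathbf{e}_{q+r})+b)] < 0$ and above which it is $> 0$; so $e^{-\Phi(q)b}M^{(q,r)}(b)$ is strictly increasing on $(-\infty, \underline{b})$ and strictly decreasing on $(\underline{b}, \infty)$. (One should handle the countably many points where $f'$ fails to exist by noting $f$ convex implies $f'$ exists off a countable set and $M^{(q,r)}$ is continuous, so the monotonicity statements pass to the closure.)

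Next I would analyze the boundary behavior of $e^{-\Phi(q)b}M^{(q,r)}(b)$ itself. As $b \to -\infty$: from \eqref{M_def}, write $M^{(q,r)}(b) = \frac{\Phi(q+r)-\Phi(q)}{r}\int_{-\infty}^\infty f'(y) H^{(q+r)}(b-y,\Phi(q))\diff y + \frac{q}{q+r}\frac{\Phi(q+r)}{\Phi(q)}C$; since $H^{(q+r)}(b-y,\Phi(q)) = e^{\Phi(q)(b-y)}$ for $y > b$ (by Remark \ref{rem_y<0}(i)) and is bounded appropriately for $y \le b$, the integral is dominated by the $f'(-\infty)$-type behavior and one finds $e^{-\Phi(q)b}M^{(q,r)}(b) \to$ a strictly negative limit (or $-\infty$), reflecting $f'(-\infty) + Cq < 0$ — more precisely, using the antiderivative identity for $\overline H^{(q+r)}$ from the proof of Lemma \ref{lemma_derivatives}, $e^{-\Phi(q)b}M^{(q,r)}(b)$ can be written so that its $b\to-\infty$ limit is a negative constant times $(f'(-\infty)+Cq)/q$ or similar. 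As $b \to \infty$: a symmetric computation using $f'(\infty) + Cq > 0$ gives $e^{-\Phi(q)b}M^{(q,r)}(b) \to$ a strictly positive limit (or $+\infty$). Combined with unimodality (increasing then decreasing), a function that starts negative, ends positive, and is first increasing then decreasing must in fact be increasing on all of $(-\infty,\underline b)$ past a single zero crossing, be positive at $\underline b$, and then decrease toward a positive limit — hence it has exactly one zero, which lies in $(-\infty, \underline b)$. Since $e^{-\Phi(q)b} > 0$, this zero is the unique root $b^*$ of $M^{(q,r)}$.

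The main obstacle I anticipate is making the limiting statements $\lim_{b\to\pm\infty} e^{-\Phi(q)b}M^{(q,r)}(b)$ rigorous: one must justify interchanging limits and integrals in expressions involving $f'$ (which may be unbounded and is only controlled through the polynomial-growth bound on $f$ and the one-sided limits $f'(\pm\infty)$), and control the scale-function factors $H^{(q+r)}(b-y,\Phi(q))$, $W^{(q+r)}(b-y)$ uniformly. Remark \ref{remark_smoothness_zero}(3) and Lemma \ref{A3_der} (differentiation under the integral) together with Lemma \ref{A3_fin} (integrability) should supply the needed estimates; the cleanest route is probably to work with the probabilistic representation $e^{-\Phi(q)b}M^{(q,r)}(b) = (\text{const}) \cdot \big(Cq + q\,\E[\,\cdot\,]\big)$-type antiderivative rather than the raw integral, so that dominated convergence applies directly to $\E[f'(\underline X(\mathbf e_{q+r})+b)]$ against the finite measure \eqref{density_running_min}. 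Aside from this analytic bookkeeping, the argument is essentially the calculus fact that a continuous function which is unimodal (up then down) and changes sign from $-$ to $+$ has a unique zero.
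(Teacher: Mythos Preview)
Your overall strategy matches the paper's: use Lemma \ref{lemma_M_monotone} to deduce that $g(b):=e^{-\Phi(q)b}M^{(q,r)}(b)$ is unimodal (first nondecreasing, then nonincreasing), and combine this with the behavior of $g$ at $\pm\infty$ to locate a unique root. The existence half goes through essentially as you describe.

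However, your claimed limit at $+\infty$ is wrong, and this breaks the uniqueness step. You assert that ``a symmetric computation using $f'(\infty)+Cq>0$ gives $e^{-\Phi(q)b}M^{(q,r)}(b)\to$ a strictly positive limit (or $+\infty$).'' In fact $\lim_{b\to\infty}e^{-\Phi(q)b}M^{(q,r)}(b)=0$: the constant term in \eqref{M_def} is killed by the factor $e^{-\Phi(q)b}$, the contribution from $y>b$ equals $\int_b^\infty f'(y)e^{-\Phi(q)y}\diff y$ (the tail of an absolutely convergent integral, by polynomial growth of $f'$), and the contribution from $y\le b$ also vanishes (the paper checks this via a dominated-convergence argument using Lemma \ref{A3_fin}). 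There is no symmetry between $b\to-\infty$ and $b\to+\infty$ here because $e^{-\Phi(q)b}$ blows up in one direction and decays in the other.

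With the correct limit $g(b)\to 0$ as $b\to\infty$, your ``starts negative, ends positive, unimodal $\Rightarrow$ unique zero'' argument fails: a nonincreasing function on $(\bar b,\infty)$ with limit $0$ is only guaranteed to be $\ge 0$ there, and nothing you have written rules out $g\equiv 0$ on a half-line $[\bar b,\infty)$, which would destroy uniqueness. The paper closes this gap by a short contradiction: if $(e^{-\Phi(q)b}M^{(q,r)}(b))'|_{b=b^*+}=0$, monotonicity of $l(b)$ forces $g\equiv 0$ on $[b^*,\infty)$, hence by Lemma \ref{lemma_M_monotone} $Cq+\E[f'(\underline X(\mathbf e_{q+r})+b)]=0$ for a.e.\ $b>b^*$, contradicting $\lim_{b\to\infty}\E[f'(\underline X(\mathbf e_{q+r})+b)]=f'(\infty)>-Cq$. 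You need to add this (or an equivalent) argument.
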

\begin{proof}
	(i) First we note
	\begin{align} \label{H_f_prime_Phi}
		e^{-\Phi(q)b} \int_{-\infty}^b |f'(y)|
		H^{(q+r)}(b-y, \Phi(q)) 
		\diff y = \int_{-\infty}^0 e^{-\Phi(q)b}  |f'(y+b)|
		H^{(q+r)}(-y, \Phi(q)) 
		\diff y.
	\end{align}
	Because $f'$ is nondecreasing and also of polynomial growth,  $f'((y+b)+)^+ \leq \sum_{0 \leq m \leq N} C_m |y|^m b^{N-m} + K$, $y \in \R$, for some $N \in \mathbb{N}$ and $C_m, K > 0$; similar bounds can be obtained for $f'((y+b)+)^-$. Because $b^k e^{-\Phi(q)b}$ is bounded in $b > 0$ for each $k \geq 0$, we see that $e^{-\Phi(q)b}  |f'((y+b)+)|$ is bounded by a polynomial of $y$ (independent of $b$).  This together with Lemma \ref{A3_fin} allows us to apply dominated convergence, and hence \eqref{H_f_prime_Phi} vanishes as $b \rightarrow \infty$.
	Therefore, in view of \eqref{M_def}, we obtain that
	\begin{equation}\label{lim_b}
	\lim_{b\to\infty}e^{-\Phi(q)b}M^{(q,r)}(b)=0.
	\end{equation}
	
(ii)  By Lemma \ref{lemma_M_monotone} and 
Assumption \ref{assump_f}(i), $
	b \mapsto  l(b) := e^{\Phi(q)b} (e^{-\Phi(q)b}M^{(q,r)}(b))'$ is nonincreasing.   In addition, monotone convergence and Assumption \ref{assump_f}(ii) give 
\begin{align}
\lim_{b \downarrow -\infty}  l(b)
=- \frac{\Phi(q+r)}{q+r} \Big[ Cq + f'(-\infty) \Big] > 0, \quad 
\lim_{b \uparrow \infty}  l(b)
=- \frac{\Phi(q+r)}{q+r} \Big[ Cq + f'(\infty) \Big] < 0.\label{M_limit_infty}
\end{align}
By the positivity of  $\exp (\Phi(q)b)$, there exists $\overline{b} \in \R$ such that $(e^{-\Phi(q)b}M^{(q,r)}(b))' \geq 0$ a.e.\ on $(-\infty, \overline{b})$ and $(e^{-\Phi(q)b}M^{(q,r)}(b))' \leq 0$ a.e.\ on $(\overline{b}, \infty)$; equivalently $b \mapsto e^{-\Phi(q)b}M^{(q,r)}(b)$ is nondecreasing (resp.\ nonincreasing) on $(-\infty, \overline{b})$ (resp.\ $(\overline{b}, \infty)$).
By this and \eqref{lim_b}, there exists $-\infty <  b^* \leq \overline{b}$ such that $e^{-\Phi(q)b}M^{(q,r)}(b)$ (and hence $M^{(q,r)}(b)$ as well) is non-positive on $(-\infty, b^*)$ and non-negative on $(b^*, \infty)$. By the continuity of $M^{(q,r)}(b)$, we must have $M^{(q,r)}(b^*)=0$.

(iii) To conclude, we show the uniqueness of $b^*$.  Because $b^* \leq \overline{b}$, (by the definition of $\overline{b}$) we must have \break $(e^{-\Phi(q)b}M^{(q,r)}(b))'|_{b=b^*+} \geq 0$. Hence it suffices to show that $(e^{-\Phi(q)b}M^{(q,r)}(b))'|_{b=b^*+} \neq 0$ (equivalently $l(b^*+) \neq 0$). Suppose $l(b^*+) = 0$. Then, because $l$ is nonincreasing on $(b^*, \infty)$, $l(b) \leq 0$ a.e.\ on $(b^*, \infty)$ and hence  $e^{-\Phi(q)b}M^{(q,r)}(b) \leq 0$ for $b \in  [b^*, \infty)$.  Because this is also nonnegative by how $b^*$ was chosen, $e^{-\Phi(q)b}M^{(q,r)}(b) = 0$ uniformly on $[b^*, \infty)$, implying $(e^{-\Phi(q)b}M^{(q,r)}(b))' = 0$ a.e.\ on $(b^*, \infty)$, or equivalently, by Lemma \ref{lemma_M_monotone},
$Cq + \E\left[f'(\underline{X}(\mathbf{e}_{q+r})+b)\right] = 0$ for a.e.\ $(b^*, \infty)$, which contradicts with \eqref{M_limit_infty}. 
\end{proof}

\begin{remark}
Using identity \eqref{int_by_parts_f} in \eqref{M_def} leads to 
\begin{multline*}
\frac{q+r}{\Phi(q+r)}M^{(q,r)}(b) =\frac{Cq}{\Phi(q)}+ \frac  {q+r} {r} \frac  {(\Phi(q+r)- \Phi(q))} {\Phi(q+r)} \\ \times \Big[- f(b)+\Phi(q)\int_b^{\infty}f(y)e^{-\Phi(q)(y-b)}\diff y+\int_{-\infty}^bf'(y)
H^{(q+r)}(b-y, \Phi(q))
\diff y \Big].
\end{multline*}
Because monotone convergence and the expression \eqref{H_simple} give
$\lim_{r\to\infty}\int_{-\infty}^b |f'(y)|
H^{(q+r)}(b-y, \Phi(q))
\diff y =0$
and $\lim_{r \rightarrow \infty}\Phi(q+r) = \infty$, we have
\[
\lim_{r\to\infty}\frac{q+r}{\Phi(q+r)}M^{(q,r)}(b)= \tilde{M}^{(q)}(b) := \Phi(q)\int_b^{\infty}f(y)e^{-\Phi(q)(y-b)}\diff y+\frac{Cq}{\Phi(q)}-f(b).
\]
This is consistent with \cite{Y} where the optimal barrier for the classical case is  the root of $\tilde{M}^{(q)}(b) = 0$.
\end{remark}



\section{Proof of optimality} \label{section_optimality}


With $b^* \in \R$ selected in the previous section,  we will prove that our candidate value function $v_{b^*}$ satisfies 
the conditions required in Lemma \ref{verificationlemma} and hence that the strategy $\pi^{b^*}$ is optimal.


\par 
We first confirm the desired smoothness for 
 $v_{b^*}$; we defer the proof to Appendix \ref{App_B}.
\begin{lemma}
	\label{smoothness}
	The function $v_{b^*}$ is sufficiently smooth on $\R$. 
\end{lemma}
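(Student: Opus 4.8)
The plan is to establish the required smoothness of $v_{b^*}$ by working from the explicit expression \eqref{true_v_b} in Proposition \ref{prop_v_b}, together with the equivalent forms \eqref{v_f_x<b} (valid for $x<b^*$) and the derivative formula \eqref{v_b_prime} from Lemma \ref{lemma_derivatives}(i). The strategy splits according to whether $X$ has paths of bounded or unbounded variation, since ``sufficiently smooth'' means $C^1(\R)$ in the former case and $C^2(\R)$ in the latter. In both cases, the only point where smoothness could fail is $x=b^*$, because away from $b^*$ the functions $W^{(q)}$, $W^{(q+r)}$, $Z^{(q)}(\cdot,\theta)$, $\overline{Z}^{(q)}$ and the convolution integrals appearing in $v_{b^*}$ inherit the smoothness of the scale functions on $\R\setminus\{0\}$ (Remark \ref{remark_smoothness_zero}(1)) and of $f$; in the unbounded variation case one also uses that $W^{(q)}\in C^1(\R\setminus\{0\})$ there (and similarly $W^{(q+r)}$), while in the bounded variation case $C^1$ away from $b^*$ is enough. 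So the crux is matching one-sided derivatives (first derivatives in the bounded variation case, first and second in the unbounded variation case) at $x=b^*$.

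First I would record that the first derivative is already continuous at $b^*$: formula \eqref{v_b_prime} is valid for all $x$ (as noted in the proof of Lemma \ref{lemma_derivatives}(i), the right- and left-hand derivatives at $x=b$ both coincide with \eqref{v_b_prime}), so $v_{b^*}\in C^1(\R)$ provided the right-hand side of \eqref{v_b_prime} is continuous at $b^*$ --- which it is, since every term there is continuous in $x$ (the scale-function terms $Z^{(q)}(x-b^*,\Phi(q+r))$, $Z^{(q)}(x-b^*)$ are continuous everywhere, and the integrals $\int_{b^*}^x W^{(q)}(x-y)f'(y)\diff y$ and $\int_{-\infty}^{b^*} f'(y)\Psi(x-b^*,y-b^*)\diff y$ are continuous in $x$ by dominated convergence using Lemma \ref{A1_new version} and Remark \ref{lemma_limit_func_f}). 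That settles the bounded variation case. For the unbounded variation case I would then differentiate \eqref{v_b_prime} once more. Using $Z^{(q)\prime}(x,\Phi(q+r)) = \Phi(q+r)Z^{(q)}(x,\Phi(q+r)) - r W^{(q)}(x)$ (from \eqref{Z2}), $Z^{(q)\prime}(x) = qW^{(q)}(x)$, $\frac{\partial}{\partial x}\int_{b^*}^x W^{(q)}(x-y)f'(y)\diff y = W^{(q)}(0)f'(x) + \int_{b^*}^x W^{(q)\prime}(x-y)f'(y)\diff y$ (with $W^{(q)}(0)=0$ in the unbounded variation case), and differentiating $\Psi$ via Lemma \ref{lemma_Psi_Upsilon}, one obtains an expression for $v_{b^*}''(x)$ on each side of $b^*$. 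The potential jump across $b^*$ comes from the terms involving $W^{(q)\prime}$ at argument approaching $0$ and from the discontinuity of $\frac{\partial}{\partial x}\Psi(x-b^*,y-b^*)$ noted in the proof of Lemma \ref{lemma_derivatives}(i) (the jump $-W^{(q+r)}(0)$, which vanishes in the unbounded variation case since $W^{(q+r)}(0)=0$). I would show these contributions cancel, giving $v_{b^*}\in C^2(\R)$.

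An alternative and cleaner route, which I would actually prefer to present, uses Lemma \ref{v_prime_matches}: since $M^{(q,r)}(b^*)=0$, we have $v_{b^*}'(x) = \E_x[\int_0^\infty e^{-qt} f'(U_r^{b^*}(t))\diff t]$ for all $x\in\R$. The right-hand side is a resolvent-type quantity of the Parisian reflected process applied to $f'$; by Theorem \ref{theorem_resolvents} (extended to $f'$ via the polynomial growth bounds of Lemmas \ref{A3_fin} and \ref{A1_new version}, exactly as in Proposition \ref{prop_v_b}) it equals $\int_{-\infty}^\infty f'(y) r^{(q,r)}_{b^*}(x,y)\diff y$, an expression of the same structural form as $v_{b^*}$ itself but with $f$ replaced by $f'$ and with coefficient $F(b^*)$ replaced by the corresponding $M$-type constant. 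One then differentiates this representation once more: its $x$-dependence enters only through $Z^{(q,r)}(x-b^*)$ and $\Upsilon(x-b^*,\cdot)$, both of which are $C^1$ away from $0$ and whose one-sided derivatives at $x=b^*$ differ only by the jump of $\frac{\partial}{\partial x}\Upsilon$, equal to $-W^{(q+r)}(0)=0$ in the unbounded variation case (Remark \ref{remark_smoothness_zero}(2)); hence $v_{b^*}''$ is continuous at $b^*$, and away from $b^*$ it is continuous by the smoothness of the scale functions and dominated convergence (justified by Lemma \ref{A1_new version} and a derivative bound analogous to Lemma \ref{A2} applied to $f'$ in place of $f$). The main obstacle I anticipate is the bookkeeping in the unbounded variation case: carefully tracking which terms carry a factor $W^{(q)\prime}$ or $W^{(q+r)\prime}$ evaluated near $0$, invoking $W^{(q)}(0)=W^{(q+r)}(0)=0$ at the right moments, and verifying that all one-sided limits at $b^*$ agree --- together with justifying the differentiation under the integral sign for the tail integral over $(-\infty,-M)$, which requires the polynomial-growth and dominated-convergence arguments of Lemma \ref{A2} (and Lemma \ref{A3_der}) adapted from $f$ to $f'$.
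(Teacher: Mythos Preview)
Your proposal is correct, and your preferred ``alternative route'' via Lemma \ref{v_prime_matches} and Theorem \ref{theorem_resolvents} is exactly the paper's approach. Two small differences in execution are worth noting. First, for $C^1$ the paper obtains continuity of $v_{b^*}'$ directly from the probabilistic representation $v_{b^*}'(x)=\E_x[\int_0^\infty e^{-qt}f'(U_r^{b^*}(t))\diff t]$ by monotone convergence (using that $f'$ is nondecreasing) together with the pathwise monotonicity/comparison bound \eqref{U_bound}, rather than by checking continuity of the right-hand side of \eqref{v_b_prime}; this sidesteps the need for dominated-convergence estimates on the $\Psi$-integral. Second, for $C^2$ in the unbounded-variation case the paper does more than verify the jump at $b^*$ vanishes: after differentiating it rewrites the key integral piece $\int_{-\infty}^{b^*\vee x} f'(y)A(x,y,b^*)\diff y$ (where $A$ packages the derivative of $\Upsilon$) as a combination of $\E[f'(\underline{X}(\mathbf{e}_{q+r})+x)]$ and stopped resolvents $\E_{\cdot}[\int_0^{\tau_\cdot^+}e^{-(q+r)t}f'(X(t))\diff t]$, and then proves \emph{global} continuity of $v_{b^*}''$ from these probabilistic representations via monotone/dominated/bounded convergence. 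This is cleaner than arguing ``smoothness of scale functions plus dominated convergence'' directly on $W^{(q+r)\prime}$, since the latter requires integrable bounds on $f'(y)W^{(q+r)\prime}(x-y)$ that are awkward to produce; the probabilistic rewriting absorbs those derivatives into density identities (via \eqref{density_running_min}) where the bounds come for free.
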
	
Now in order to verify the equality \eqref{HJB-inequality}, we prove the following.
\begin{lemma} \label{lemma_convexity_slope_at_b}
	The function $v_{b^*}$ is convex, and $v_{b^*}'(b^*) = -C$. 
\end{lemma}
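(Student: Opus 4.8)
The claim $v_{b^*}'(b^*) = -C$ is already in hand: by Lemma 4.6 (with $M^{(q,r)}(b^*) = 0$) and equation \eqref{v_b_prime_at_b} we have $v_{b^*}'(b^*) = M^{(q,r)}(b^*) - C = -C$. So the content of the lemma is convexity of $v_{b^*}$. The plan is to prove convexity by showing that $v_{b^*}'$ is nondecreasing on $\R$, and for this I would use the representation in Lemma 4.6(ii), namely
\begin{align*}
v_{b^*}'(x) = \E_x\Big[\int_0^\infty e^{-qt} f'(U_r^{b^*}(t))\,\diff t\Big], \quad x \in \R.
\end{align*}
This is the natural route because $f'$ is nondecreasing (Assumption 2.2(i)) and, by the monotonicity estimate \eqref{U_monotone_in_starting_value}--\eqref{U_bound} established in the proof of Lemma 3.5, the Parisian-reflected process $U_r^{b^*}$ started from a larger initial value dominates the one started from a smaller initial value, pathwise.

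Concretely, I would couple the processes: on the probability space carrying $X$ and $N^r$, realize $U_r^{b^*,x}$ and $U_r^{b^*,y}$ for $x < y$ simultaneously (driven by the same $X$ and the same Poisson clock), exactly as in Lemma 3.5. Then \eqref{U_monotone_in_starting_value} gives $U_r^{b^*,y}(t) \geq U_r^{b^*,x}(t)$ for all $t \geq 0$, almost surely. Since $f'$ is nondecreasing wherever it is defined — and $f$, being convex, is differentiable outside a countable set, so $f'(U_r^{b^*,\cdot}(t))$ is well-defined for a.e.\ $t$ — we get $f'(U_r^{b^*,y}(t)) \geq f'(U_r^{b^*,x}(t))$ for a.e.\ $t$, pathwise. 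Taking expectations (the integrals are finite and well-behaved by Remark 2.1 together with the polynomial-growth bound \eqref{U_bound}, or one can simply invoke that $v_{b^*}$ and the resolvent of $f'$ are finite as established in Section 4) and applying Fubini yields $v_{b^*}'(y) \geq v_{b^*}'(x)$. Hence $v_{b^*}'$ is nondecreasing, so $v_{b^*}$ is convex.

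The main obstacle I anticipate is a measure-theoretic subtlety rather than a conceptual one: justifying that $f'(U_r^{b^*,x}(t))$ is a legitimate, integrable integrand and that the pathwise inequality survives taking expectations. One must check that the (Lebesgue-null in $t$) set of times at which $U_r^{b^*,x}(t)$ lands on one of the at most countably many non-differentiability points of $f$ is genuinely negligible — this follows because $U_r^{b^*,x}$ has at most countably many jumps and its continuous part is a time-changed spectrally negative Lévy process whose occupation measure is absolutely continuous off the jump times, but it deserves a sentence. Integrability of $\int_0^\infty e^{-qt} |f'(U_r^{b^*,x}(t))|\,\diff t$ follows from Assumption 2.2(i) (polynomial growth of $f$, hence of $f'$) together with \eqref{U_bound} and Remark 2.1 applied to the uncontrolled process; alternatively, finiteness is implicit in Lemma 4.6(ii) being stated as an equality. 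Once these points are dispatched, the convexity conclusion is immediate, and combined with the already-noted slope identity $v_{b^*}'(b^*) = -C$ the lemma is proved.
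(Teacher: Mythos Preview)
Your proposal is correct and follows essentially the same approach as the paper: the paper's proof also obtains convexity from the representation $v_{b^*}'(x) = \E_x[\int_0^\infty e^{-qt} f'(U_r^{b^*}(t))\,\diff t]$ (Lemma \ref{v_prime_matches}) combined with the pathwise monotonicity of $U_r^{b^*}$ in the starting value established in the proof of Lemma \ref{lemma_v_b_polynomial}, and obtains the slope identity from \eqref{v_b_prime_at_b} and $M^{(q,r)}(b^*)=0$. Your additional care about the measure-theoretic points (a.e.\ differentiability of $f$, integrability) is more explicit than the paper's one-line treatment, but the argument is the same; note only that your cross-references ``Lemma 4.6'' and ``Lemma 3.5'' do not match the paper's numbering---the relevant results are Lemma \ref{v_prime_matches} and Lemma \ref{lemma_v_b_polynomial}.
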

\begin{proof}
 (i) By Assumption \ref{assump_f}(i), 
$f'$ is increasing Lebesgue-a.e.
Hence, using Lemma \ref{v_prime_matches}, together with the monotonicity of $U_r^{b^*}$ in the starting point as in \eqref{U_monotone_in_starting_value}, we obtain 
\begin{align*}
v_{b^*}'(x)=\E_x \Big[\int_0^{\infty}e^{-qt}f'(U_r^{b^*}(t))\diff t \Big]\leq \E_y \Big[\int_0^{\infty}e^{-qt}f'(U_r^{b^*}(t))\diff t \Big]=v_{b^*}'(y)\qquad \text{ for $x<y$.}
\end{align*}
Therefore $v_{b^*}$ is convex.
 
(ii) By how $b^*$ is chosen so that $M^{(q,r)}(b^*) = 0$ and  \eqref{v_b_prime_at_b}, $v_{b^*}'(b^*) = -C$.
\end{proof}

Next, by an application of Lemma \ref{lemma_convexity_slope_at_b}, the following result is immediate.
\begin{proposition}\label{lemma_inf} 
For $x\in\R$, we have 
\begin{equation}\label{infimum}
\mathcal{M}v_{b^*}(x)- v_{b^*}(x)
=
\begin{cases} C(b^*-x)+v_{b^*}(b^*)-v_{b^*}(x) &\mbox{if } x \in (-\infty,b^*), \\ 
0 & \mbox{if } x \in [b^*,\infty). \end{cases}
\end{equation}
\end{proposition}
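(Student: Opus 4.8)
The plan is to read the result off directly from the convexity of $v_{b^*}$ and the first-order condition $v_{b^*}'(b^*) = -C$, both established in Lemma \ref{lemma_convexity_slope_at_b}, together with the $C^1$-regularity granted by Lemma \ref{smoothness}. Fix $x \in \R$ and consider the function $\phi_x(l) := C l + v_{b^*}(x+l)$ on $[0,\infty)$, so that $\mathcal{M} v_{b^*}(x) = \inf_{l \geq 0} \phi_x(l)$. Since $v_{b^*}$ is continuously differentiable and convex, $\phi_x$ is $C^1$ on $[0,\infty)$ with derivative $\phi_x'(l) = C + v_{b^*}'(x+l)$, which is nondecreasing in $l$.

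First suppose $x \geq b^*$. Then for every $l \geq 0$ we have $x + l \geq b^*$, so monotonicity of $v_{b^*}'$ gives $v_{b^*}'(x+l) \geq v_{b^*}'(b^*) = -C$, i.e.\ $\phi_x'(l) \geq 0$. Hence $\phi_x$ is nondecreasing on $[0,\infty)$ and the infimum is attained at $l = 0$, so $\mathcal{M} v_{b^*}(x) = v_{b^*}(x)$, which is the stated value on $[b^*,\infty)$.

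Next suppose $x < b^*$. Since $v_{b^*}'$ is nondecreasing and continuous with $v_{b^*}'(b^*) = -C$, we have $\phi_x'(l) \leq 0$ for $0 \leq l \leq b^* - x$ and $\phi_x'(l) \geq 0$ for $l \geq b^* - x$. Therefore $\phi_x$ is nonincreasing on $[0, b^*-x]$ and nondecreasing on $[b^*-x,\infty)$, so its minimum over $[0,\infty)$ is attained at $l^\ast := b^* - x \geq 0$, which gives $\mathcal{M} v_{b^*}(x) = C(b^*-x) + v_{b^*}(b^*)$. Subtracting $v_{b^*}(x)$ in both cases yields the claimed formula; this also confirms the picture anticipated in Remark \ref{remark_verificatin_lemma}(2).

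The argument is genuinely immediate once convexity and the slope condition are in hand, so there is no substantive obstacle. The only point worth recording is that the infimum in the definition of $\mathcal{M}$ is actually attained, which holds because $\phi_x$ is continuous and its derivative changes sign at $l^\ast = (b^*-x)^+$, so the minimum over the closed half-line $[0,\infty)$ is realized rather than merely approached.
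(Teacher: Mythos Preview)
Your proof is correct and matches the paper's own approach: the paper simply states that the result is immediate from Lemma \ref{lemma_convexity_slope_at_b} (convexity and $v_{b^*}'(b^*)=-C$), and you have supplied precisely the standard first-order argument that makes this immediate.
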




%

Now we show the following auxiliary result. 
\begin{proposition}\label{lemma_generator}  
	(i) For $x  < b^*$, we have 
		\begin{align*}
	(\mathcal{L}-q)v_{b^*}(x)+f(x)&=- \frac  {qr} {q+r} \Big( F(b^*) \left(1-e^{\Phi(q+r)(x-b^*)}\right) + C(b^*-x) \Big) \notag\\&\displaystyle+r\int_{-\infty}^{b^*}f(y)
	\Theta^{(q+r)}(x-b^*,y-b^*)
	\diff y.
	\end{align*}
	(ii) For $x \geq b^*$, we have $(\mathcal{L}-q)v_{b^*}(x) + f(x) = 0$. 
\end{proposition}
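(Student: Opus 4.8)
The strategy is to compute $(\mathcal{L}-q)v_{b^*}$ directly from the explicit formula for $v_{b^*}$, treating the two regions separately. For part (ii), the key observation is Lemma \ref{v_prime_matches}: since $M^{(q,r)}(b^*)=0$, we have $v_{b^*}'(x)=\E_x[\int_0^\infty e^{-qt}f'(U_r^{b^*}(t))\diff t]$. The plan is to relate the resolvent of $f$ (Theorem \ref{theorem_resolvents}) and the resolvent of $f'$ to the function $v_{b^*}$ and its derivative, and then recognize that on $[b^*,\infty)$ the Parisian reflected process $U_r^{b^*}$ started from $x\geq b^*$ behaves, up to the first down-crossing of $b^*$, like the free process $X$; a renewal/strong-Markov argument then shows that $g(x):=\E_x[\int_0^\infty e^{-qt}f(U_r^{b^*}(t))\diff t]=v_{b^*}(x)-C\cdot(\text{replenishment NPV})$ solves $(\mathcal{L}-q)g(x)+f(x)=0$ on the continuation region, while the controlling-cost term $C\int e^{-qt}\diff R^{b^*}_r$ contributes nothing to $(\mathcal{L}-q)$ on $[b^*,\infty)$ because no replenishment occurs there before exiting. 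Alternatively, and more in the spirit of the paper's "reduce everything to scale functions" philosophy, one simply plugs the closed form \eqref{true_v_b} into $\mathcal{L}-q$ and uses the standard fact that $(\mathcal{L}-q)W^{(q)}=0$ and $(\mathcal{L}-q)Z^{(q)}=0$ on $(0,\infty)$, together with the analogous identities for $Z^{(q)}(\cdot,\Phi(q+r))$ (which is a $(q+r)$-harmonic-type function) and for the convolution term $\Upsilon$; the running cost $f$ is reproduced by the resolvent density, and the remaining terms cancel by the definition of $F(b^*)$.

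For part (i), the plan is to use the explicit formula \eqref{v_f_x<b} valid for $x<b^*$, namely
\[
v_{b^*}(x)=F(b^*)\frac{r+qe^{\Phi(q+r)(x-b^*)}}{q+r}+\int_{-\infty}^{b^*}f(y)\Theta^{(q+r)}(x-b^*,y-b^*)\diff y-\frac{Cr}{q+r}\Big\{x-b^*+\frac{\kappa'(0+)}{q}\Big\},
\]
and apply $\mathcal{L}-q$ term by term. The linear term $x-b^*+\kappa'(0+)/q$ satisfies $(\mathcal{L}-q)(x)=\kappa'(0+)-qx$ in the relevant sense (using $\mathcal{L}\,\mathrm{id}=\gamma+\int_{(-\infty,0)}(z-z1_{\{-1<z<0\}})\Pi(\diff z)=\kappa'(0+)$), so that $(\mathcal{L}-q)$ applied to $-\frac{Cr}{q+r}\{x-b^*+\kappa'(0+)/q\}$ produces the term $\frac{qr}{q+r}C(b^*-x)$ after simplification. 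The exponential term $e^{\Phi(q+r)(x-b^*)}$ is an eigenfunction: $(\mathcal{L}-q)e^{\Phi(q+r)\cdot}=(\kappa(\Phi(q+r))-q)e^{\Phi(q+r)\cdot}=r\,e^{\Phi(q+r)\cdot}$, which combined with the constant part of $F(b^*)\frac{r+q e^{\Phi(q+r)(x-b^*)}}{q+r}$ (on which $(\mathcal{L}-q)$ acts as $-q$) yields $-\frac{qr}{q+r}F(b^*)(1-e^{\Phi(q+r)(x-b^*)})$. The crucial and most delicate term is $\int_{-\infty}^{b^*}f(y)\Theta^{(q+r)}(x-b^*,y-b^*)\diff y$: one must justify interchanging $\mathcal{L}-q$ with the integral (appealing to Lemma \ref{A1_new version} and the growth bounds on $f$, together with a Lemma \ref{A2}-type argument for the non-local part of $\mathcal{L}$), and then use the probabilistic identity \eqref{killed_resolvent}: $x\mapsto\int_{-\infty}^{b^*}f(y)\Theta^{(q+r)}(x-b^*,y-b^*)\diff y$ equals $\E_{x}[\int_0^{\tau_{b^*}^+}e^{-(q+r)t}f(X(t))\diff t]$ (up to the shift), so $(\mathcal{L}-(q+r))$ of it equals $-f(x)$ on $(-\infty,b^*)$ by the standard resolvent equation; adding back $r$ times the function converts $\mathcal{L}-(q+r)$ into $\mathcal{L}-q$ and produces exactly the term $r\int_{-\infty}^{b^*}f(y)\Theta^{(q+r)}(x-b^*,y-b^*)\diff y-f(x)$, and the $-f(x)$ cancels the $+f(x)$ on the left-hand side of the claimed identity. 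Collecting all pieces gives the stated formula.

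The main obstacle I anticipate is the rigorous justification of applying the integro-differential operator $\mathcal{L}$ under the integral sign in the term involving $\int_{-\infty}^{b^*}f(y)\Theta^{(q+r)}(\cdot,y-b^*)\diff y$ — the non-local part of $\mathcal{L}$ requires integrability of $f(x+z)$ against $\Pi(\diff z)$ uniformly enough to swap the two integrals, and near $y=x$ the kernel $\Theta^{(q+r)}$ has only the limited smoothness of $W^{(q+r)}$ (recall Remark \ref{remark_smoothness_zero}), so care is needed when $X$ has unbounded variation. I would handle this exactly as in Lemma \ref{A2} and Lemma \ref{A1_new version}: split the integral at a large $-M$, treat the compact part using the $C^1$/$C^2$ regularity of $\Theta^{(q+r)}$ away from the diagonal plus the explicit jump of its $x$-derivative across $y=x$ (which is $-W^{(q+r)}(0)$, contributing nothing when $X$ has unbounded variation and combining correctly with the bounded-variation drift term otherwise), and control the tail using the polynomial growth of $f$ together with the exponential decay of $\Theta^{(q+r)}(x-b^*,y-b^*)$ in $y$ (which follows from Remark \ref{remark_smoothness_zero}(3) and the definition \eqref{Theta_def}). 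Once the interchange is justified, the computation is the routine term-by-term application sketched above, and part (ii) follows either from the same computation using \eqref{true_v_b} on $x\geq b^*$ or, more cleanly, by combining Lemma \ref{v_prime_matches}, the identity in Lemma \ref{lemma_derivatives}(ii), and the resolvent equation for $U_r^{b^*}$ on the continuation region together with \eqref{controlling_cost}.
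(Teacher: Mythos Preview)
Your decomposition for part (i) is correct and matches the paper for the exponential, constant, and linear pieces. The point of divergence is how you justify $(\mathcal{L}-(q+r))\int_{-\infty}^{b^*}f(y)\Theta^{(q+r)}(x-b^*,y-b^*)\diff y=-f(x)$. You frame this as an interchange of $\mathcal{L}$ with the $\diff y$-integral, to be justified by Lemma~\ref{A2}-type bounds; the paper instead avoids the interchange entirely via a martingale argument. Writing $G^{(q+r)}(z):=\E_z[\int_0^{\tau_{b^*}^+}e^{-(q+r)t}f(X(t))\diff t]$, the paper shows that $t\mapsto e^{-(q+r)(t\wedge T)}G^{(q+r)}(X(t\wedge T))+\int_0^{t\wedge T}e^{-(q+r)s}f(X(s))\diff s$ is a genuine martingale (it is a conditional expectation of an integrable terminal variable), notes that $G^{(q+r)}$ is sufficiently smooth by reading it off from \eqref{v_f_x<b} and Lemma~\ref{smoothness}, and then applies It\^o's formula to conclude $(\mathcal{L}-(q+r))G^{(q+r)}=-f$. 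This is cleaner: your interchange route runs into the fact that $x\mapsto\Theta^{(q+r)}(x,y)$ has only the regularity of $W^{(q+r)}$ across $x=y$, so $(\mathcal{L}-(q+r))\Theta^{(q+r)}(\cdot,y)$ is distributional there, and recovering $-f(x)$ from the integrated version requires more than tail control.

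For part (ii) the paper's route is again a direct martingale argument, much shorter than either of the routes you sketch. Since for $x>b^*$ the controlled process coincides with $X$ and no replenishment occurs before $\tau_{b^*}^-$, the strong Markov property makes $t\mapsto e^{-q(t\wedge T_{(b^*,N)})}v_{b^*}(X(t\wedge T_{(b^*,N)}))+\int_0^{t\wedge T_{(b^*,N)}}e^{-qs}f(X(s))\diff s$ a martingale, and It\^o's formula (using Lemma~\ref{smoothness}) gives $(\mathcal{L}-q)v_{b^*}+f=0$ immediately. This bypasses both the scale-function harmonic identities and the detour through Lemma~\ref{v_prime_matches} / Lemma~\ref{lemma_derivatives}(ii), neither of which is needed here.
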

\begin{proof}
(i) Suppose $x < b^*$.
Direct computation gives
$(\mathcal{L}-(q+r))e^{\Phi(q+r)(x-b^*)}=0$, and hence 
\begin{align*} 
(\mathcal{L}-q)
\left(r+q e^{\Phi(q+r)(x-b^*)}\right) 
=
q r(e^{\Phi(q+r)(x-b^*)}-1).
\end{align*}

Let us define, for fixed $z \leq  b^*$, 
\begin{align}
G^{(q+r)}(z):=\E_{z}\Big[\int_0^{\tau_{b^*}^+}e^{-(q+r)t}f(X(t))\diff t\Big] =\int_{-\infty}^{b^*} f(y)
\Theta^{(q+r)}(z-b^*,y-b^*)\diff y, \label{about_G_q_r}
\end{align}
where the last equality holds by \eqref{killed_resolvent}, and is well-defined and finite for all $z \leq b^*$ 
by Remark \ref{remark_finiteness_resolvents}.
With $T_{(-N,b ^*)}:=\inf\{t>0:X(t)\not\in[-N,b ^*]\}$ for $-N<x$, define the processes 
\begin{align*} 
I(t)&:=e^{-(q+r)(t\wedge T_{(-N,b^*)})}G^{(q+r)}(X(t\wedge T_{(-N,b^*)}))+\int_0^{t\wedge T_{(-N,b^*)}}e^{-(q+r)s}f(X(s))\diff s,\quad\text{$t\geq0$}, \\
I(\infty)&:= \lim_{t \rightarrow \infty} I(t) = e^{-(q+r)T_{(-N,b^*)}}G^{(q+r)}(X(T_{(-N,b^*)}))+\int_0^{T_{(-N,b^*)}}e^{-(q+r)s}f(X(s))\diff s.
\end{align*}
Note by the strong Markov property that $G^{(q+r)}(x) = \E_x [I(\infty)]$.



With $(\mathcal{G}(t); t \geq 0)$ being the natural filtration of $X$, we define $\mathbb{P}_x$-martingale: 
$\tilde{I}(t) := \E_x\left[I(\infty)|\mathcal{G}(t)\right]$, $t\geq0$. 
For $x<b^*$ and $t>0$, by the strong Markov property of $X$ and because, on  $\{ t \geq T_{(-N, b^*)} \}$,  $I(t) = I(\infty) = \tilde{I} (t)$, we can write
\begin{align*}
	\tilde{I}(t)  &=1_{\{t<T_{(-N,b ^*)}\}}\Big\{e^{-(q+r)t}\E_{X(t)}\left[I(\infty)\right] +\int_0^te^{-(q+r)s}f(X(s))\diff s\Big\}+1_{\{t\geq T_{(-N,b ^*)}\}} I(t).
\end{align*}
On the other hand, because  $\mathbb{P}_x$-a.s.,
\begin{align*}
	1_{\{t<T_{(-N,b ^*)}\}} I (t)&=1_{\{t<T_{(-N,b ^*)}\}}\Big\{e^{-(q+r)t} \E_{X(t)}\left[I(\infty)\right]+\int_0^te^{-(q+r)s}f(X(s))\diff s\Big\},
\end{align*}
we have that $I = \tilde{I}$, meaning it is  a $\p_x$-martingale.

By Lemma \ref{smoothness} together with the expressions \eqref{v_f_x<b} and \eqref{about_G_q_r}, we have that $G^{(q+r)}$ is sufficiently smooth.
Therefore, using this martingale property and It\^o's formula 
we conclude that
$(\mathcal{L}-q-r)G^{(q+r)}(x)=-f(x)$, 
or equivalently, using the last equality of  \eqref{about_G_q_r}, 
\begin{align*}
(\mathcal{L}-q)\int_{-\infty}^{b^*}&f(y)
\Theta^{(q+r)}(x-b^*,y-b^*)
\diff y + f(x) = r\int_{-\infty}^{b^*}f(y)
\Theta^{(q+r)}(x-b^*,y-b^*)
\diff y.
\end{align*}
Finally, direct computation gives
$(\mathcal{L}-q)\left(b^*-x-\frac{\kappa'(0+)}{q}\right)=-q(b^*-x)$.
Hence putting the pieces together, we complete the proof for the case $x < b^*$.

(ii) Fix $x>b^*$. Similarly to $I$ defined above, the process 
\begin{align*}
e^{-q(t\wedge T_{(b ^*,N)})}v_{b^*}(X(t\wedge T_{(b ^*,N)}))+\int_0^{t\wedge T_{(b ^*,N)}}e^{-qs}f(X(s))\diff s,\qquad\text{$t\geq0$},
\end{align*}
where $T_{(b^*, N)}:=\inf\{t>0:X(t)\not\in[b ^*, N]\}$ with $N > x$,  is a $\p_x$-martingale.
Hence using the martingale property and It\^o's formula (which we can use thanks to the fact that $v_{b^*}$ is sufficiently smooth as in Lemma \ref{smoothness}), we conclude that
$(\mathcal{L}-q)v_{b^*}(x)+f(x) = 0$, as desired. 

For the case $x = b^*$, because $v_{b^*}$ is sufficiently smooth, we obtain the result upon taking $x \rightarrow{b^*}$.
\end{proof}
Now we are ready to show the main result of the paper.
\begin{theorem}
The policy $\pi^{b^*}$ is optimal and the value function is given by $v(x)=v_{b^*}(x)$ for all $x\in\R$.
\end{theorem}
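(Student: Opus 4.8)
The plan is to verify the hypotheses of the Verification Lemma (Lemma \ref{verificationlemma}) with the candidate $\hat{\pi} = \pi^{b^*}$ and $w = v_{b^*}$. By Proposition \ref{prop_v_b}, $v_{b^*} = v_{\pi^{b^*}}$ is the expected NPV of total costs under an admissible policy (admissibility having been noted via \eqref{controlling_cost}), so it remains to check the three conditions: sufficient smoothness, polynomial growth, and the Hamilton--Jacobi--Bellman equality \eqref{HJB-inequality}.

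First I would record that polynomial growth is exactly Lemma \ref{lemma_v_b_polynomial}, and sufficient smoothness is Lemma \ref{smoothness}. The remaining task is to establish
\begin{align*}
(\mathcal{L} - q)v_{b^*}(x) + r(\mathcal{M} v_{b^*}(x) - v_{b^*}(x)) + f(x) = 0, \quad x \in \R.
\end{align*}
For $x \geq b^*$, Proposition \ref{lemma_inf} gives $\mathcal{M} v_{b^*}(x) - v_{b^*}(x) = 0$, so the equality reduces to $(\mathcal{L}-q)v_{b^*}(x) + f(x) = 0$, which is exactly Proposition \ref{lemma_generator}(ii). For $x < b^*$, Proposition \ref{lemma_inf} gives $\mathcal{M} v_{b^*}(x) - v_{b^*}(x) = C(b^*-x) + v_{b^*}(b^*) - v_{b^*}(x)$, and Proposition \ref{lemma_generator}(i) supplies an explicit formula for $(\mathcal{L}-q)v_{b^*}(x) + f(x)$ in terms of $F(b^*)$, the exponential term $e^{\Phi(q+r)(x-b^*)}$, and the integral $\int_{-\infty}^{b^*} f(y) \Theta^{(q+r)}(x-b^*,y-b^*)\diff y$. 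So the whole claim comes down to an algebraic identity: one must show that $r$ times the explicit expression for $\mathcal{M} v_{b^*}(x) - v_{b^*}(x)$ (obtained by substituting the formula \eqref{v_f_x<b} for $v_{b^*}$ on $(-\infty, b^*)$ at the points $x$ and $b^*$) exactly cancels the expression from Proposition \ref{lemma_generator}(i).

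Carrying this out: from \eqref{v_f_x<b}, $v_{b^*}(b^*) = F(b^*) + \frac{C\kappa'(0+)}{q} \cdot \frac{-r}{q+r}$ wait --- more carefully, $v_{b^*}(b^*) = F(b^*) - \frac{Cr}{q+r}\cdot\frac{\kappa'(0+)}{q}$ and $v_{b^*}(x) = F(b^*)\frac{r + q e^{\Phi(q+r)(x-b^*)}}{q+r} + \int_{-\infty}^{b^*} f(y)\Theta^{(q+r)}(x-b^*, y-b^*)\diff y - \frac{Cr}{q+r}\big(x - b^* + \frac{\kappa'(0+)}{q}\big)$. Subtracting, $v_{b^*}(b^*) - v_{b^*}(x) = F(b^*)\frac{q(1 - e^{\Phi(q+r)(x-b^*)})}{q+r} - \int_{-\infty}^{b^*} f(y)\Theta^{(q+r)}(x-b^*,y-b^*)\diff y + \frac{Cr}{q+r}(x-b^*)$. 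Adding $C(b^*-x)$ and multiplying by $r$ gives $r(\mathcal{M}v_{b^*}(x) - v_{b^*}(x)) = \frac{qr}{q+r}F(b^*)(1 - e^{\Phi(q+r)(x-b^*)}) - r\int_{-\infty}^{b^*} f(y)\Theta^{(q+r)}(x-b^*,y-b^*)\diff y + \frac{Cr^2}{q+r}(x-b^*) + rC(b^*-x)$. Noting $\frac{Cr^2}{q+r}(x-b^*) + rC(b^*-x) = rC(b^*-x)\big(1 - \frac{r}{q+r}\big) = \frac{qr}{q+r}C(b^*-x)$, this becomes precisely the negative of the right-hand side in Proposition \ref{lemma_generator}(i), so the two cancel and \eqref{HJB-inequality} holds on $(-\infty, b^*)$ as well.

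The main obstacle is not any single step --- each piece is already in place --- but rather confirming that the bookkeeping in the cancellation above is exact, i.e.\ that the constant $F(b^*)$, the $e^{\Phi(q+r)(x-b^*)}$ coefficients, the linear-in-$x$ terms coming from $\overline{Z}^{(q)}$, and the $\Theta^{(q+r)}$-integral all match up sign-for-sign between Proposition \ref{lemma_generator}(i) and the formula for $\mathcal{M} v_{b^*} - v_{b^*}$. Once that is checked, Lemma \ref{verificationlemma} applies directly and yields $v(x) = v_{b^*}(x)$ for all $x \in \R$ with $\pi^{b^*}$ optimal. I would also remark that the only place where the specific choice $M^{(q,r)}(b^*) = 0$ is used is implicitly, through Lemma \ref{lemma_convexity_slope_at_b} (convexity and $v_{b^*}'(b^*) = -C$), which is what makes Proposition \ref{lemma_inf} valid; the HJB verification itself then goes through mechanically.
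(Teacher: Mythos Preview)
Your proposal is correct and follows essentially the same route as the paper: invoke Lemmas \ref{lemma_v_b_polynomial} and \ref{smoothness} for polynomial growth and smoothness, then verify \eqref{HJB-inequality} by splitting into $x \geq b^*$ (where Proposition \ref{lemma_inf} gives $\mathcal{M}v_{b^*}(x)-v_{b^*}(x)=0$ and Proposition \ref{lemma_generator}(ii) applies directly) and $x < b^*$ (where one computes $\mathcal{M}v_{b^*}(x)-v_{b^*}(x)$ explicitly from \eqref{v_f_x<b} and checks it cancels against Proposition \ref{lemma_generator}(i)). Your algebraic bookkeeping for the $x<b^*$ case is exactly what the paper does, and your observation that the choice $M^{(q,r)}(b^*)=0$ enters only through Lemma \ref{lemma_convexity_slope_at_b} (hence Proposition \ref{lemma_inf}) is a useful clarification.
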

\begin{proof} In view of Lemma \ref{smoothness}, it is sufficient to verify \eqref{HJB-inequality}.

(i) Suppose $x < b^*$.
Using Proposition \ref{lemma_inf} and \eqref{v_f_x<b}, we have
\begin{align*}
\begin{split}
\mathcal{M}v_{b^*}(x)- v_{b^*}(x)
=\frac  {q} {q+r}\left(C(b^*-x)+ F(b^*) \left(1-e^{\Phi(q+r)(x-b^*)}\right)\right) -\int_{-\infty}^{b^*}f(y)
\Theta^{(q+r)}(x-b^*,y-b^*)
\diff y.
\end{split}
\end{align*}
Hence using this and Proposition \ref{lemma_generator}(i), we deduce \eqref{HJB-inequality} for $x < b^*$.
(ii) For the case $x \geq b^*$,  using Proposition \ref{lemma_generator}(ii)
and \eqref{infimum}, we have \eqref{HJB-inequality} as well.
\end{proof}

\begin{remark}
A natural extension 
of the considered problem is to allow additional \emph{fixed ordering costs} 
incurred at each time order is made. In this case,
a ``periodic $(s,S)$-policy'' 
is expected to be optimal. This policy replenishes the item up to the inventory level $S$ at each observation time $\mathcal{T}_r$ whenever it is below the level $s$. This is an interesting and challenging problem and we leave it for further work. 
\end{remark}		
	
\subsection{Numerical examples} 
We now confirm numerically the obtained results using the quadratic inventory cost  $f(x) = x^2$.  In this case, a straightforward computation gives $b^* = \Phi(q+r)^{-1}  -   \Phi(q)^{-1} - \kappa'(0+)/(q+r)   - {q  C} / 2$.
We assume that 
$ X(t) = X(0)+ t+ 0.2 B(t) - \sum_{n=1}^{N(t)} Z_n$, for $0\le t <\infty$. 
Here, $B$ is a standard Brownian motion, $N$ is a Poisson process with arrival rate $1$, and  
$Z$ is an i.i.d.\ sequence of phase-type random variables (whose parameters are given in \cite{leung2015analytic}) approximating the Weibull distribution with shape and scale parameters $2$ and $1$, respectively. The corresponding scale function admits a closed form expression as in \cite{Egami_Yamazaki}. We set $q = 0.05$, $r = 0.5, $ and $C = 1$, unless stated otherwise.

In Figure \ref{fig_optimality}, we plot  $x \mapsto v_b(x)$ for $b = b^*$ and for $b \neq b^*$ along with the points $(b, v_b(b))$. It is confirmed that $v_{b^*}$ is indeed convex (as in Lemma \ref{lemma_convexity_slope_at_b}) and  minimizes over $b$ uniformly in $x$.

In Figure \ref{fig_sensitivity}, we show $v_{b^*}$ for various values of the unit replenishment cost/reward $C$ and the rate of Poisson arrivals $r$, along with those in the continuous monitoring case  \cite{Y}. For the former, as $C$ increases, the value function $v_{b^*}$ increases (uniformly in $x$) while  $b^*$ decreases. On the other hand, as $r$ increases, both $v_{b^*}$ and $b^*$ decrease.  As $r \rightarrow \infty$, the convergence to the case  \cite{Y}  is also confirmed.

 \begin{figure}[htbp]
\begin{center}
\begin{minipage}{1.0\textwidth}
\centering
\begin{tabular}{c}
 \includegraphics[scale=0.35]{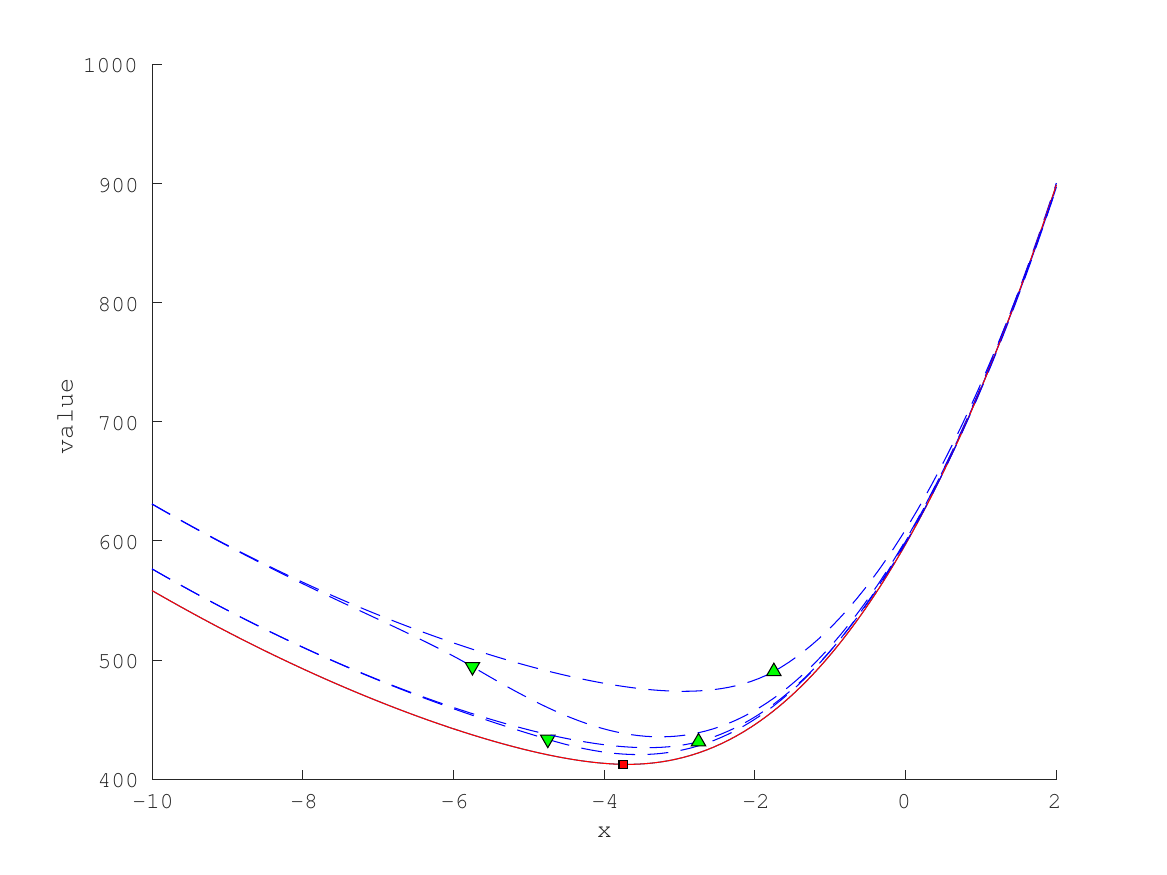}   
\end{tabular}
\end{minipage}
\end{center}
\caption{\footnotesize Plots of $v_{b^*}$ (solid) in comparison to $v_b$ for $b$  $=$  $b^*-2, b^* -1, b^*+1, b^*+2$ (dotted). The point  $(b^*, v_{b^*}(b^*))$ is indicated by a square while the points $(b, v_b(b))$ are indicated by down- and up-pointing triangles for $b < b^*$ and $b > b^*$, respectively.}  \label{fig_optimality}
\end{figure}

 \begin{figure}[htbp]
\begin{center}
\begin{minipage}{1.0\textwidth}
\centering
\begin{tabular}{cc}
 \includegraphics[scale=0.35]{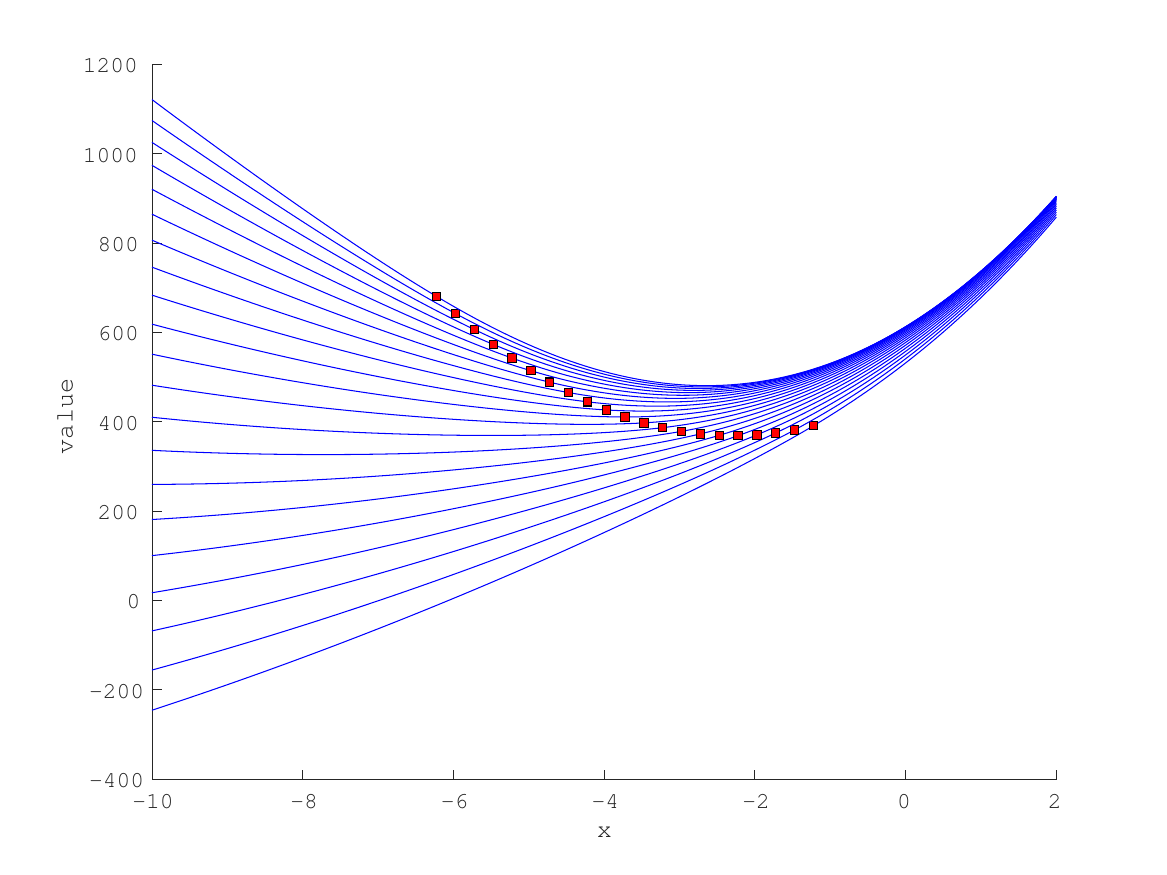}   &  \includegraphics[scale=0.35]{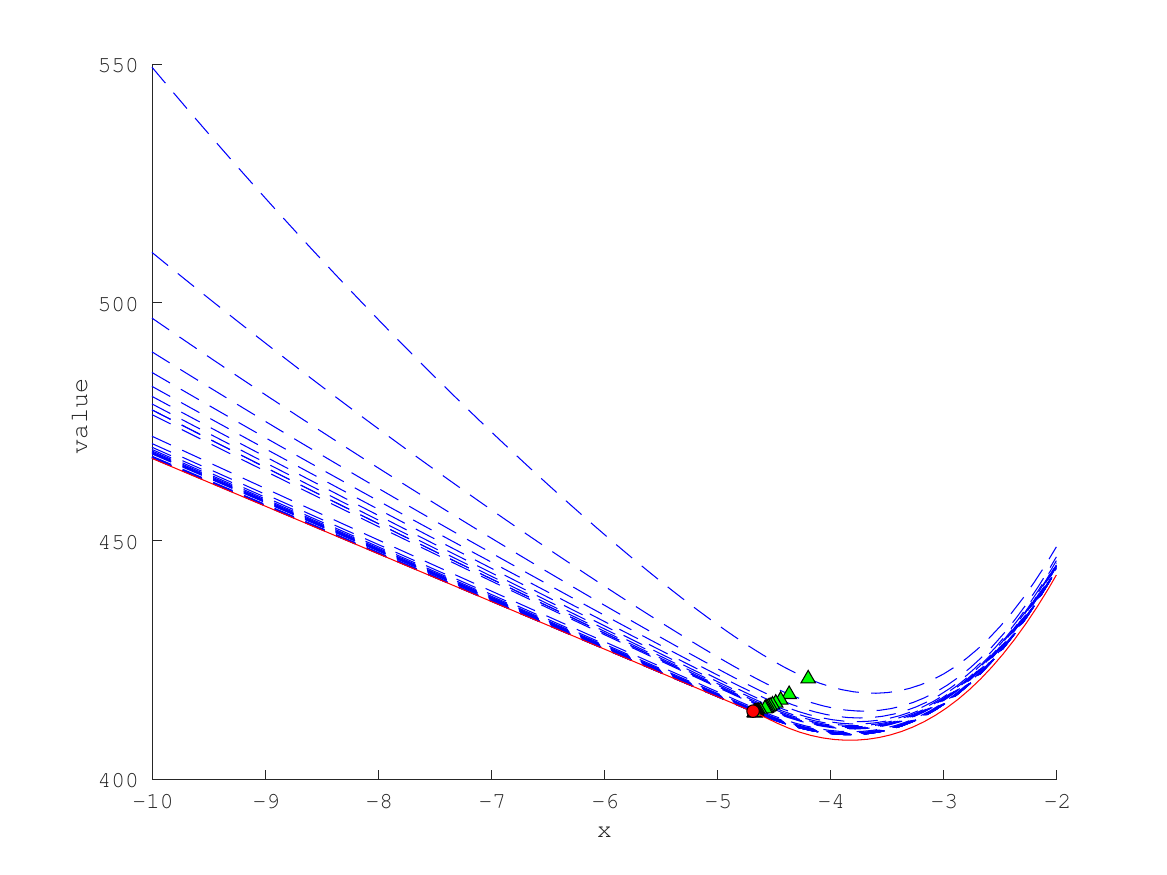}   
\end{tabular}
\end{minipage}
\end{center}
\caption{\footnotesize (Left) Plots of $v_{b^*}$ for  $C=$ $- 100$, $-90$, $\ldots$, $90$, $100$ with $(b^*, v_{b^*}(b^*))$ indicated by squares. (Right) Plots of $v_{b^*}$ (dotted) for  $r = 0.1$, $0.2$, $\ldots$, $0.9$, $1$, $2$, $\ldots$, $9$, $10$, $20$, $\ldots$, $90$, $100$, $200$, $\ldots$, $900$, $1000$ with $(b^*, v_{b^*}(b^*))$ indicated by triangles, along with the continuous monitoring case (solid) with the point at the optimal barrier indicated by a square.}  \label{fig_sensitivity}
\end{figure}

\appendix


\section{Proof of Theorem \ref{theorem_resolvents}}

Recall as in Corollaries 8.7 and 8.8 of \cite{K}, that for any Borel set $A$ on $[0, \infty)$ and on $\R$, respectively, 
\begin{align} \label{resolvent_density}
\E_x \Big[ \int_0^{\tau_{0}^- } e^{-qt} 1_{\left\{ X(t) \in A \right\}} \diff t\Big] &= \int_A \Big[ e^{-\Phi(q) y}W^{(q)}(x) -{W^{(q)}} (x-y) \Big] \diff y, \quad x \geq 0, \\
\label{resolvent_density_2}
	\E_x\left[ \int_0^{\infty}e^{-(q+r)t } 1_{\{X(t) \in A\}}\diff t\right] &=\int_A \left[\frac{e^{\Phi(q+r)(x-y)}}{\kappa'(\Phi(q+r))
	}-W^{(q+r)}(x-y)\right]\diff y, \quad x \in \R.
\end{align} 

\subsection{Proof of Theorem \ref{theorem_resolvents}} \label{proof_resolvents}
For $x\in\R$, let us denote the left-hand side of \eqref{g_b_inf_inf} by $g_b(x)$ and in particular $g(x):=g_0(x)$. We will prove the result for $b=0$; the general case follows because the spatial homogeneity of the L\'evy process implies that 
$g_b(x)=\E_{x-b}\left[ \int_0^{\infty } e^{-qt} h (U_r^0(t)+b) \diff t \right]$.
	
(i) For $x \in \R$,  by the strong Markov property,
\begin{align}\label{MPa}
g(x)&=\E_x\Big[ \int_0^{\tau_0^-  } e^{-qt} h(X(t)) \diff t \Big] + \E_x\left[ e^{-q\tau_0^-}g(X(\tau^-_0))  1_{\{   \tau^-_0<\infty\}} \right].
\end{align}
In particular, for $x < 0$, again by the strong Markov property and because $U_r^0  = X$ on $[0, T(1) \wedge \tau_0^+)$, 
\begin{align*}
g(x) = A (x) g(0)+ B(x), 
\end{align*} 
where, for $x \leq 0$, 
\begin{align*}
A(x) &:= \E_x \Big[ e^{-q (\tau_0^+ \wedge T(1))}  \Big] = \frac{r}{q+r}+\frac{q}{q+r}e^{\Phi(q+r)x},  \nonumber \\
\begin{split}
B(x)&:
= \E_x \Big[ \int_0^{\tau_0^+} e^{-qt} 1_{\{ t < T(1)\}} h(X(t))     \diff t\Big] 
 =\int_{-\infty}^0 h(y) 
 \Theta^{(q+r)}(x,y)
 \diff y.
 \end{split}
\end{align*}
Here, the second equality of the former holds by the fact that $T(1)$ is an independent exponential random variable with parameter $r$ and Theorem 3.12 of \cite{K}. The second equality of the latter holds by \eqref{killed_resolvent}. 

Now applying identity (3.19) in \cite{APP2007},
\begin{align*}
\E_{x}&\left[e^{-q\tau_0^-}A(X(\tau_0^-))1_{\{\tau_0^-<\infty\}}\right]\\
&=\frac{r}{q+r}\left(Z^{(q)}(x)-\frac{q}{\Phi(q)}W^{(q)}(x)\right)+\frac{q}{q+r}\left(Z^{(q)}(x,\Phi(q+r))-\frac{r W^{(q)}(x)}{\Phi(q+r)-\Phi(q)}\right)\\
&=Z^{(q,r)}(x)- \frac {qr} {q+r} \frac {\Phi(q+r)} {\Phi(q) (\Phi(q+r)- \Phi(q))} W^{(q)}(x).
\end{align*}
In addition, 
using identity (5) in \cite{AI} together with Lemma 2.1 in \cite{LRZ},
we obtain for $c > x$,
\begin{align}\label{undershoot_generator}
\E_{x}\left[e^{-q\tau_0^-}B(X(\tau_0^-))1_{\{\tau_0^-<\tau_c^+\}}\right]
&= \int_{-\infty}^0 h(y)   \E_x\left[e^{-q\tau_0^-}
\Theta^{(q+r)}(X(\tau_0^-),y)
1_{\{\tau_0^-<\tau_c^+\}}\right]  \diff y \notag\\
&= -\int_{-\infty}^0 h(y) \Upsilon(x, y)
\diff y +\frac{W^{(q)}(x)}{W^{(q)}(c)}\int_{-\infty}^0 h(y)  \Upsilon(c,y)
\diff y.
\end{align}
By  \eqref{Z_special} and Remark \ref{remark_smoothness_zero}(3), for $y \in \R$, 
	$\lim_{x \rightarrow \infty}W^{(q,r)}_y(x)/W^{(q)}(x)
	=Z^{(q+r)}(-y, \Phi(q))$.
Also following the proof of Corollary 3.2(iii) in \cite{APY} we have
$\lim_{x\to\infty}Z^{(q)}(x,\Phi(q+r))/W^{(q)}(x)=r/(\Phi(q+r)-\Phi(q))$. 
Hence by taking $c\uparrow\infty$ in \eqref{undershoot_generator} and using these limits, we get 
\begin{align*}
\E_{x}\left[e^{-q\tau_0^-}B(X(\tau_0^-))1_{\{\tau_0^-<\infty\}}\right]
=-\int_{-\infty}^0 h(y)  \Upsilon(x,y) \diff y
+W^{(q)}(x)\int_{-\infty}^0 h(y)  H^{(q+r)}(-y, \Phi(q))
\diff y.
\end{align*}
Substituting these in \eqref{MPa} and by \eqref{resolvent_density} and Remark \ref{rem_y<0},
\begin{multline}\label{MPa2}
g(x)=
g(0)\left\{Z^{(q,r)}(x)- \frac {qr} {q+r} \frac {\Phi(q+r)} {\Phi(q) (\Phi(q+r)- \Phi(q))}W^{(q)}(x)\right\}
\\+W^{(q)}(x)\int_{-\infty}^\infty h(y)  H^{(q+r)}(-y, \Phi(q)) \diff y -\int_{-\infty}^\infty h(y)  \Upsilon(x,y)\diff y.
\end{multline}

(ii) On the other hand, by the strong Markov property, we can also write 
\begin{align} \label{g_zero_recursion}
g(0) = \gamma_1 + \gamma_2 g(0) +  \gamma_3,
\end{align}
where 
\begin{align*}
\gamma_1 := \E \Big[\int_0^{T(1)} e^{-qt} h(X(t)) \diff t \Big], \; \gamma_2 := \E \Big[e^{-q T(1)} 1_{\{X(T(1)) \leq 0\}} \Big], \; \gamma_3 :=  \E \Big[e^{-q T(1)}  g(X(T(1)) 1_{\{ X(T(1)) > 0\}}   \Big],
\end{align*}
whose values are to be computed below.

(1) We get
$\gamma_1 =\E \left[\int_0^\infty 1_{\{t < T(1) \}}e^{-qt} h(X(t)) \diff t \right] = \E [\int_0^{\infty} e^{-(q+r) t} h(X(t)) \diff t ]$. 

(2) Using \eqref{resolvent_density_2}, we obtain 
\begin{align*}
\gamma_2 = r \Big( \frac{1}{q+r}- \E \left[\int_0^{\infty}e^{-(q+r)s} 1_{\{X(s) \geq 0\}}\diff s\right] \Big)
	&=r \Big( \frac{1}{q+r}-\frac{1}{\kappa'(\Phi(q+r))}\frac{1}{\Phi(q+r)} \Big).
\end{align*}

(3) Again by \eqref{resolvent_density_2},
\begin{align}
\gamma_3 =r\E \Big[\int_0^{\infty}e^{-(q+r)s}g(X(s)) 1_{\{ X(s) > 0\}}\diff s\Big]=\frac{r}{\kappa'(\Phi(q+r))}\int_0^{\infty}
e^{-\Phi(q+r)y}g(y)\diff y, \label{gamma_3_rewrite}
\end{align}
which we shall compute using the expression of $g$ as in \eqref{MPa2}. First, by 
 integration by parts, 
\begin{align*}
\int_0^{\infty}e^{-\Phi(q+r)y}Z^{(q)}(y)\diff y
&=\frac{1}{\Phi(q+r)}\left(1+q\int_0^{\infty}e^{-\Phi(q+r)u}W^{(q)}(u)\diff u\right) =\frac{1}{\Phi(q+r)}\frac{q+r}{r}.
\end{align*}
Because \eqref{scale_function_laplace} and  \eqref{Z_special} give $e^{-\Phi(q+r)y}Z^{(q)}(y,\Phi(q+r)) = r\int_y^{\infty}e^{-\Phi(q+r)z}W^{(q)}(z)\diff z$,
\begin{align} \label{Z_Phi_integral}
\begin{split}
	\int_0^{\infty}e^{-\Phi(q+r)y}Z^{(q)}(y,\Phi(q+r))\diff y 
	&=r\int_0^{\infty}\int_y^{\infty}e^{-\Phi(q+r)z}W^{(q)}(z)\diff z \diff y\\
	&=r\int_0^{\infty}ze^{-\Phi(q+r)z}W^{(q)}(z)\diff z =\frac{\kappa'(\Phi(q+r))}{r},
	\end{split}
\end{align}
where the second equality holds by the change of variables and the last holds because monotone convergence and \eqref{scale_function_laplace} give that  $\int_0^{\infty}ze^{-\theta z}W^{(q)}(z)\diff z =-\frac{\partial}{\partial\theta} \int_0^{\infty}e^{-\theta z}W^{(q)}(z)\diff z$. 



\begin{lemma} \label{lemma_Upsilon_integral}
For $y \in \R$,
$\int_0^{\infty}e^{-\Phi(q+r)x}\Upsilon(x,y)\diff x= [{e^{-\Phi(q+r)y}}  -W^{(q+r)}(-y) {\kappa'(\Phi(q+r))} ]/ {r}$. 
\end{lemma}
\begin{proof} 


We have for $\theta > \Phi(q+r)$, by the convolution theorem,
\begin{multline*}
\int_0^\infty e^{-\theta x} W_y^{(q,r)}(x)
\diff x = \Big( \int_0^\infty e^{-\theta x}  W^{(q+r)}(x-y) \diff x \Big) \Big( 1 - r \int_0^\infty e^{- \theta x} W^{(q)}(x) \diff x\Big) \\
=\Big( \frac{ e^{-\theta y}} {\kappa(\theta) - q-r} -  \int_{y\wedge0}^0 e^{-\theta x}  W^{(q+r)}(x-y) \diff x \Big) \frac {\kappa(\theta) - q -r} {\kappa(\theta) - q}
\xrightarrow{\theta \downarrow \Phi(q+r)}  \frac{ e^{-\Phi(q+r) y}}  {r}. 
\end{multline*}
This together with  \eqref{Z_Phi_integral} completes the proof.
\end{proof}
By this lemma, Fubini's theorem and \eqref{resolvent_density_2},
\begin{align*}
\int_0^{\infty}e^{-\Phi(q+r)y}\int_{-\infty}^{\infty}h(z)\Upsilon(y,z)\diff z \diff y
=\frac{\kappa'(\Phi(q+r))}{r}\E\left[\int_0^{\infty}e^{-(q+r)t}h(X(t)) \diff t \right].
\end{align*}
Substituting these and with the help of \eqref{MPa2} in \eqref{gamma_3_rewrite}, 
\begin{multline*}
\gamma_3 
=g(0)\frac{r}{\kappa'(\Phi(q+r))}\Big[\frac{1}{\Phi(q+r)}+\frac{q}{q+r}\frac{\kappa'(\Phi(q+r))}{r}-\frac {q} {q+r} \frac {\Phi(q+r)} {\Phi(q) (\Phi(q+r)- \Phi(q))}\Big]\\
-\E\left[\int_0^{\infty}e^{-(q+r)t}h(X(t))\diff t \right]+\frac{1}{\kappa'(\Phi(q+r))} \int_{-\infty}^\infty h(y)  H^{(q+r)}(-y, \Phi(q)) \diff y.
\end{multline*}
Now substituting the computed values of $\gamma_1$, $\gamma_2$, and $\gamma_3$  in \eqref{g_zero_recursion}
 and after simplification, we have
\begin{align*}
g(0)
&=g(0)-\frac {rq} {q+r} \frac {\Phi(q+r)} {\Phi(q) (\Phi(q+r)- \Phi(q))}\frac{g(0)}{\kappa'(\Phi(q+r))}+\frac{1}{\kappa'(\Phi(q+r))}\int_{-\infty}^\infty h(y)  H^{(q+r)}(-y, \Phi(q)) \diff y,
\end{align*}
and hence, solving for $g(0)$ we obtain
\begin{align*}
	g(0)=
	\frac  {q+r} {qr} \frac  {\Phi(q) (\Phi(q+r)- \Phi(q))} {\Phi(q+r)}
	\int_{-\infty}^\infty h(y) H^{(q+r)}(-y, \Phi(q))  
	\diff y.
\end{align*}
Substituting this back in \eqref{MPa2}, we have \eqref{g_b_inf_inf} for $b = 0$, as desired. 

\section{Integrability results}

\begin{lemma}\label{A3_fin}
	Consider $g:\R\to\R$ that satisfies Assumption \ref{assump_f}(i). 
	Then, for any $b\in\R$ and $\theta\geq0$, we have 
	$\int_{-\infty}^{b} | g(y)|H^{(q+r)}(b-y, \theta)
	\diff y<\infty$.
\end{lemma}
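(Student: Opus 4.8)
The plan is to prove a pointwise estimate $H^{(q+r)}(z,\theta)\le e^{-\beta z}$ for all $z>0$, with $\beta>0$ independent of $z$ and $\theta$, and then to integrate this exponentially small bound against the at most polynomially growing $|g|$.

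For the bound: on $\{\tau_0^-<\infty\}$ the spectral negativity of $X$ forces $X(\tau_0^-)\le0$, so $e^{\theta X(\tau_0^-)}\le1$ for every $\theta\ge0$ and hence, by \eqref{H_theta}, $H^{(q+r)}(z,\theta)\le H^{(q+r)}(z)=\E_z[e^{-(q+r)\tau_0^-}1_{\{\tau_0^-<\infty\}}]$ (see \eqref{H_simple}). By Assumption \ref{assump_levy_measure} we have $\E_0[e^{-\beta X(1)}]=e^{\kappa(-\beta)}<\infty$ for $\beta\in(0,\bar\theta]$, and since $\kappa(0)=0<q+r$ and $\kappa$ is continuous on its effective domain, I may fix $\beta\in(0,\bar\theta]$ with $\kappa(-\beta)\le q+r$. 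Then $M_t:=e^{-\beta X(t)-\kappa(-\beta)t}$ is a nonnegative $\p_z$-martingale, so optional stopping at $\tau_0^-\wedge t$ gives $\E_z[M_{\tau_0^-\wedge t}]=e^{-\beta z}$; using $M\ge0$ and letting $t\uparrow\infty$ by monotone convergence yields $\E_z[M_{\tau_0^-}1_{\{\tau_0^-<\infty\}}]\le e^{-\beta z}$. On $\{\tau_0^-<\infty\}$ we have $X(\tau_0^-)\le0$ and $\kappa(-\beta)\le q+r$, whence $M_{\tau_0^-}=e^{-\beta X(\tau_0^-)-\kappa(-\beta)\tau_0^-}\ge e^{-(q+r)\tau_0^-}$, and taking expectations gives $H^{(q+r)}(z)\le e^{-\beta z}$.

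It remains to integrate. Combining the above with the substitution $z=b-y$, $\int_{-\infty}^{b}|g(y)|H^{(q+r)}(b-y,\theta)\diff y\le e^{-\beta b}\int_{-\infty}^{b}|g(y)|e^{\beta y}\diff y$. Since $g$ is convex it is continuous, hence locally bounded, and it dominates one of its supporting lines from below; together with the polynomial tail bound of Assumption \ref{assump_f}(i) (and $N\ge1$) this gives constants $a_1,a_2>0$ with $|g(y)|\le a_1+a_2|y|^N$ for all $y\in\R$. As $\int_{-\infty}^{b}(a_1+a_2|y|^N)e^{\beta y}\diff y<\infty$, the lemma follows.

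The only genuine difficulty is the exponential estimate in the second paragraph, and this is exactly where Assumption \ref{assump_levy_measure} is used: the naive bound obtained from the exponential growth of $W^{(q+r)}$ in Remark \ref{remark_smoothness_zero}(3) is useless here because of the cancellation between $Z^{(q+r)}$ and $W^{(q+r)}$ in \eqref{H_simple}, so one really needs the extra exponential integrability of the \lev measure, exploited through the exponential (Esscher-type) martingale above. The remaining steps are routine.
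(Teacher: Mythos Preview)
Your proof is correct. Both your argument and the paper's hinge on the same underlying fact---that $H^{(q+r)}(z,\theta)\le H^{(q+r)}(z)=\mathbb P(-\underline X(\mathbf e_{q+r})>z)$ decays exponentially in $z$ thanks to Assumption~\ref{assump_levy_measure}---but the executions differ. The paper bounds $H^{(q+r)}(b-y,\theta)$ by the tail probability $\mathbb P(-\underline X(\mathbf e_{q+r})>b-y)$, applies Fubini to rewrite the integral as $\int_0^\infty \mathbb P(-\underline X(\mathbf e_{q+r})\in\diff z)\int_0^z |g(b-u)|\,\diff u$, and then invokes the known fact (from \cite{HPY}) that $\E[e^{-\theta\underline X(\mathbf e_{q+r})}]<\infty$ for $\theta\in(0,\bar\theta)$. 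You instead derive the explicit pointwise estimate $H^{(q+r)}(z)\le e^{-\beta z}$ from scratch via the Esscher martingale $e^{-\beta X(t)-\kappa(-\beta)t}$ and optional stopping, after which the integration against a polynomially bounded $|g|$ is immediate. Your route is more self-contained (it does not appeal to an external reference for the exponential moment of $-\underline X(\mathbf e_{q+r})$) and yields a cleaner quantitative bound; the paper's route is shorter given the cited result and makes the role of the running infimum more visible.
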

\begin{proof}
By identity \eqref{H_theta}, 
\begin{align*}
	&\int_{-\infty}^{b} | g(y) |
	H^{(q+r)}(b-y, \theta)
	\diff y 
	=\int_{-\infty}^{b}| g(y)|\E_{b-y}\left[ e^{-(q+r)\tau_0^-+\theta X(\tau_0^-)}1_{\{\tau_0^-<\infty\}}\right] \diff y \\
	&\leq\int_{-\infty}^{b}| g(y)|\mathbb{P}(-\underline{X}(\mathbf{e}_{q+r})>b-y)\diff y =\int_{-\infty}^{b}| g(y)|\int_{b-y}^{\infty}\mathbb{P}(-\underline{X}(\mathbf{e}_{q+r})\in \diff z)\diff y\\
	&=\int_{0}^{\infty}| g(b-u)|\int_{u}^{\infty}\mathbb{P}(-\underline{X}(\mathbf{e}_{q+r})\in \diff z)\diff u=\int_{0}^{\infty}\mathbb{P}(-\underline{X}(\mathbf{e}_{q+r})\in \diff z)\int_0^{z}| g(b-u)|\diff u.
\end{align*}
Here, as in (3.11) of \cite{HPY} (using Assumption \ref{assump_levy_measure}), we have $\E [e^{-\theta \underline{X}(\mathbf{e}_{q+r})}] < \infty$ for $0 < \theta < \bar{\theta}$. This together with the polynomial growth of $g$ as in Assumption \ref{assump_f}(i), implies that the above is finite.
\end{proof}
%


\begin{lemma}\label{A1_new version}
	Fix any $b\in\R$. 
	(i)   
	For any $x\geq b$, 
	$\sup_{z\in[0,x-b]}\int_{-\infty}^{b}|f(y)| |\Theta^{(q+r)}(z,y-b)|\diff y<\infty$. 
	(ii) For any $x\in\R$, 
	$\int_{-\infty}^{b}|f(y)||\Upsilon(x-b,y-b)|\diff y<\infty$.
\end{lemma}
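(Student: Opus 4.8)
The plan is to control the two integrals by splitting the domain $(-\infty,b)$ into a bounded piece near $b$ and a tail piece, and to exploit the explicit expressions for $\Theta^{(q+r)}$ and $\Upsilon$ together with the polynomial growth of $f$ from Assumption \ref{assump_f}(i). On the bounded piece, everything in sight is bounded (the scale functions $W^{(q+r)}$, $W^{(q)}$ are continuous, hence bounded on compacts, and $f$ is locally bounded by convexity), so only the tail $y\to-\infty$ requires genuine work. The key observation is that for $y<b$ (so $y-b<0$) and the relevant range of the first argument, the functions $\Theta^{(q+r)}$ and $\Upsilon$ decay in $|y|$ at a rate governed by $W^{(q+r)}(-y+b)$, and this decay beats the polynomial growth of $|f|$ thanks to Remark \ref{remark_smoothness_zero}(3), which gives $W^{(q+r)}(u)=e^{\Phi(q+r)u}W_{\Phi(q+r)}(u)\le \text{const}\cdot e^{\Phi(q+r)u}$ — but wait, this grows; the decay I actually want comes from recognizing $|\Theta^{(q+r)}|$ and $|\Upsilon|$ as resolvent-type quantities.

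For part (i), I would use the probabilistic representation in Remark \ref{remark_theta_sign}(ii): for $z\in[0,x-b]$ and $y-b\le z$ (which holds since $y<b\le b+z$... actually $y-b<0\le z$), equation \eqref{f_and_theta} gives $-e^{-\Phi(q+r)z}\Theta^{(q+r)}(z,y-b)\diff y = \E[\int_0^{\tau_z^+}e^{-(q+r)t}\mathbf 1_{\{X(t)\in \diff(y-b)\}}\diff t]\ge 0$. Hence $|\Theta^{(q+r)}(z,y-b)| = e^{\Phi(q+r)z}\big(-\Theta^{(q+r)}(z,y-b)\big)$, so
\begin{align*}
\int_{-\infty}^b |f(y)|\,|\Theta^{(q+r)}(z,y-b)|\diff y &= e^{\Phi(q+r)z}\,\E\Big[\int_0^{\tau_z^+} e^{-(q+r)t} |f(X(t)+b)|\,\mathbf 1_{\{X(t)<0\}}\diff t\Big] \\ &\le e^{\Phi(q+r)(x-b)}\,\E\Big[\int_0^\infty e^{-(q+r)t} |f(X(t)+b)|\diff t\Big],
\end{align*}
and the last expectation is finite by Remark \ref{remark_finiteness_resolvents} applied to $g(\cdot)=f(\cdot+b)$ (which inherits Assumptions \ref{assump_levy_measure} and \ref{assump_f}(i)). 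Since the bound is uniform in $z\in[0,x-b]$, taking the supremum is immediate. This handles (i) cleanly; the main point is just to recognize $\Theta^{(q+r)}$ as a killed resolvent density and dominate by the full $(q+r)$-resolvent.

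For part (ii), I would start from the representation \eqref{def_Upsilon}: $\Upsilon(x-b,y-b) = -\Theta^{(q+r)}(x-b,y-b) + r\int_0^{x-b} W^{(q)}(x-b-z)\Theta^{(q+r)}(z,y-b)\diff z$. When $x\ge b$, every $\Theta^{(q+r)}(z,y-b)$ appearing (with $z\in[0,x-b]$, $y<b$) is covered by part (i), and the $z$-integral is over the bounded interval $[0,x-b]$ with $W^{(q)}$ bounded there, so $\int_{-\infty}^b|f(y)||\Upsilon(x-b,y-b)|\diff y \le \big(1 + r(x-b)\|W^{(q)}\|_{[0,x-b]}\big)\sup_{z\in[0,x-b]}\int_{-\infty}^b|f(y)||\Theta^{(q+r)}(z,y-b)|\diff y<\infty$. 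When $x<b$ (so $x-b<0$), the integral in \eqref{def_Upsilon} is empty and $\Upsilon(x-b,y-b) = -\Theta^{(q+r)}(x-b,y-b)$, which by \eqref{Theta_def} equals $W^{(q+r)}(x-y) - e^{\Phi(q+r)(x-b)}W^{(q+r)}(b-y)$; using $0\le W^{(q+r)}(x-y)\le W^{(q+r)}(b-y)$ (monotonicity, $x<b$) we get $|\Upsilon(x-b,y-b)|\le W^{(q+r)}(b-y)=W^{(q+r)}((b-y)\vee 0)$, and $\int_{-\infty}^b|f(y)|W^{(q+r)}(b-y)\diff y<\infty$ by Lemma \ref{A3_fin} with $\theta$ chosen so that $H^{(q+r)}(\cdot,\theta)$ dominates $W^{(q+r)}$ — or more directly, since $W^{(q+r)}(b-y) = H^{(q+r)}(b-y,0) + \frac{q+r}{\Phi(q+r)}W^{(q+r)}(b-y)$... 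I would instead just invoke Lemma \ref{A3_fin} together with the identity $W^{(q+r)}(u) = \frac{\Phi(q+r)}{q+r}\big(Z^{(q+r)}(u) - H^{(q+r)}(u)\big)$ from \eqref{H_simple}, reducing to finiteness of $\int_{-\infty}^b |f(y)|Z^{(q+r)}(b-y)\diff y$, which follows from the same estimate as in Lemma \ref{A3_fin} since $Z^{(q+r)}$ has the same exponential order $e^{\Phi(q+r)u}$ as $H^{(q+r)}(\cdot,0)$.

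The main obstacle, I expect, is not conceptual but bookkeeping: making sure the probabilistic domination argument in part (i) is applied with the correct shift ($f(\cdot+b)$ versus $f$), that the sign conventions in \eqref{f_and_theta} line up (i.e.\ that $y-b$ really lies in the range $(-\infty, z]$ where the formula holds), and that for part (ii) the two cases $x\ge b$ and $x<b$ are both genuinely covered — the $x<b$ case must be argued separately since part (i) was only stated for $x\ge b$. An alternative that avoids case-splitting is to work directly from \eqref{fun_ups}, $\Upsilon(x-b,y-b)=W_{y-b}^{(q,r)}(x-b)-Z^{(q)}(x-b,\Phi(q+r))W^{(q+r)}(b-y)$, bound $|W_{y-b}^{(q,r)}(x-b)|$ using the second line of \eqref{W_a_def} by a constant (depending on $x,b$) times $\sup_{u\in[0,b-y]}W^{(q)}(x-b-u+b-y)$... but this reintroduces growth in $|y|$, so the resolvent/probabilistic route above is cleaner. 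Throughout, finiteness of the relevant $(q+r)$-resolvents applied to polynomially growing integrands is the workhorse, and that is exactly Remark \ref{remark_finiteness_resolvents}.
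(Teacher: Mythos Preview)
Your argument for part (i) and for part (ii) when $x\ge b$ is essentially the paper's: recognize $|\Theta^{(q+r)}(z,y-b)|$ as a killed resolvent density via \eqref{f_and_theta}, dominate by the full $(q+r)$-resolvent of $|f|$, and invoke Remark \ref{remark_finiteness_resolvents}; then for $x\ge b$ use \eqref{def_Upsilon} and Fubini to reduce to part (i). That part is fine.

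The gap is in part (ii) when $x<b$. Your bound $|\Upsilon(x-b,y-b)|\le W^{(q+r)}(b-y)$ is correct but fatal: $W^{(q+r)}(b-y)\sim \kappa'(\Phi(q+r))^{-1}e^{\Phi(q+r)(b-y)}$ as $y\to-\infty$ by Remark \ref{remark_smoothness_zero}(3), so $\int_{-\infty}^b|f(y)|W^{(q+r)}(b-y)\diff y=\infty$ in general. Your attempted rescue via Lemma \ref{A3_fin} and the identity $W^{(q+r)}(u)=\tfrac{\Phi(q+r)}{q+r}(Z^{(q+r)}(u)-H^{(q+r)}(u))$ does not work either: $H^{(q+r)}(u,\theta)\le\p(-\underline X(\mathbf e_{q+r})>u)$ \emph{decays} as $u\to\infty$ (this is exactly why Lemma \ref{A3_fin} holds), whereas $Z^{(q+r)}(u)$ \emph{grows} like $e^{\Phi(q+r)u}$, so $\int_{-\infty}^b|f(y)|Z^{(q+r)}(b-y)\diff y$ is again divergent. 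The point is that $\Theta^{(q+r)}(x-b,y-b)$ is small because of cancellation between its two exponentially growing terms, and your triangle-inequality bound destroys that cancellation. The fix is to do for $x<b$ exactly what you did for $x\ge b$: since the integral in \eqref{def_Upsilon} vanishes and $\Theta^{(q+r)}(x-b,y-b)\ge 0$ by Remark \ref{remark_theta_sign}(i), use the resolvent identity \eqref{killed_resolvent} directly to get
\[
\int_{-\infty}^{b}|f(y)||\Upsilon(x-b,y-b)|\diff y=\int_{-\infty}^{b}|f(y)|\Theta^{(q+r)}(x-b,y-b)\diff y=\E_x\Big[\int_0^{\tau_b^+}e^{-(q+r)t}|f(X(t))|\diff t\Big],
\]
which is finite by Remark \ref{remark_finiteness_resolvents}. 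This is precisely the paper's argument.
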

\begin{proof}
	(i)	
	Recall Remark \ref{remark_theta_sign}.  For $z\in[0,x-b]$, because $ b \leq z +b$, by \eqref{killed_resolvent} and following similar arguments as in \eqref{f_and_theta},
	\begin{multline*} 
	\int_{-\infty}^{b}|f(y)| |\Theta^{(q+r)}(z,y-b) | \diff y \leq \int_{-\infty}^{z+b}|f(y)||\Theta^{(q+r)}(z,y-b)|\diff y\\
	=e^{\Phi(q+r)z}\mathbb{E}_{b}\Big[\int_0^{\tau_{z+b}^+}e^{-(q+r)t}|f(X(t))|\diff t\Big] \leq e^{\Phi(q+r)(x-b)}\mathbb{E}_{b}\Big[\int_0^{\tau_{x}^+}e^{-(q+r)t}|f(X(t))|\diff t\Big],
	\end{multline*}
	and hence we have the result by Remark \ref{remark_finiteness_resolvents}.
	
	(ii) Fix $x<b$. Then by Remark \ref{remark_theta_sign}(i) and \eqref{def_Upsilon}, we have for $y<b$ that $|\Upsilon(x-b,y-b)|=\Theta^{(q+r)}(x-b,y-b)$. Hence
	\begin{align*}
		\int_{-\infty}^{b}|f(y)| |\Upsilon(x-b,y-b) | \diff y=\int_{-\infty}^{b}|f(y)| \Theta^{(q+r)}(x-b,y-b)\diff y=\E_x\Big[\int_0^{\tau_b^+}e^{-(q+r)t}|f(X(t))|\diff t\Big],
	\end{align*}
	which is finite by Remark \ref{remark_finiteness_resolvents}.
	
	 On the other hand, for $x\geq b$, we 
    note that, by an application of Fubini's theorem and (i),
	\begin{align*}
	\int_{-\infty}^{b}|f(y)|&\int_0^{x-b}W^{(q)}(x-b-z)|\Theta^{(q+r)}(z,y-b)|\diff z\diff y\\&=
	\int_0^{x-b}W^{(q)}(x-b-z)\int_{-\infty}^{b}|f(y)||\Theta^{(q+r)}(z,y-b)|\diff y\diff z\\&\leq \overline{W}^{(q)}(x-b)\sup_{z\in[0,x-b]}\int_{-\infty}^{b}|f(y)||\Theta^{(q+r)}(z,y-b)|\diff y<\infty.
	\end{align*}
	In view of the form of $\Upsilon$ as in \eqref{def_Upsilon} and (i), the proof is complete.
	\end{proof}

\section{Other  proofs}\label{sec_A1}

\subsection{Proof of Lemma \ref{lemma_Psi_Upsilon}} \label{proof_common_derivative} 
For $y < b$, because
\begin{align*}
	\frac {\partial} {\partial u} W_{u-b}^{(q,r)}(x-b) \Big|_{u = y-} 
	=-W^{(q+r)\prime}((x-y)+)+ r\int_{b}^{x}  W^{(q)}(x-z)W^{(q+r)\prime}(z-y)\diff z,
\end{align*}
we have that $-\frac {\partial} {\partial z}  \Psi(x-b,z)|_{z = (y-b)-}$ reduces to the right hand side of  \eqref{common_derivative}.

On the other hand, we obtain by integration by parts
\begin{align}\label{aux_1_app}
	\frac {\partial} {\partial z} W_{y-b}^{(q,r)}(z)\Big|_{z = (x-b)+} 
	&=W^{(q+r)\prime}((x-y)+)-rW^{(q)}(x-b)W^{(q+r)}(b-y)\notag\\&- r\int_{b}^{x}  W^{(q)}(x-z)W^{(q+r)\prime}(z-y)\diff z.
\end{align}
Using 
$\frac {\partial} {\partial z}Z^{(q)}(z,\Phi(q+r))=\Phi(q+r)Z^{(q)}(z,\Phi(q+r))-rW^{(q)}(z)$ 
 and \eqref{aux_1_app}  in \eqref{fun_ups}, we have that $\frac {\partial} {\partial z} \Upsilon(z,y-b) |_{z = (x-b)+}$ equals the right hand side of \eqref{common_derivative}.
\subsection{Proof of Lemma \ref{A2}} \label{proof_A2}
%

(i)
Fix $y < b \wedge x$ and $\varepsilon > 0$.
With $W_{\Phi(q+r)}$ defined as in Remark  \ref{remark_smoothness_zero}(3),
\begin{multline}\label{der-int_3}
\frac{\Theta^{(q +r)}(x-b+\varepsilon,y-b)-\Theta^{(q+r)}(x-b,y-b)}{\varepsilon}
=\frac{e^{\Phi(q+r)\varepsilon}-1}{\varepsilon}\Theta^{(q +r)}(x-b,y-b)  \\ -e^{\Phi(q+r)(x+\varepsilon-y)}\left(\frac{W_{\Phi(q+r)}(x+\varepsilon-y)-W_{\Phi(q+r)}(x-y)}{\varepsilon}\right).
\end{multline}
Here we note that $\varepsilon \mapsto(e^{\Phi(q+r)\varepsilon}-1)/\varepsilon$ is bounded in compact sets on $(0, \infty)$,
 and   
 that $\int_{-\infty}^{b}\big|f(y)| | \Theta^{(q+r)}(x-b,y-b)\big|\diff y < \infty$ by Lemma \ref{A1_new version}(i).


As in Appendix A.1 of \cite{HPY} (page 1150) we have that 
\begin{align*}
y \mapsto |f(y)|e^{-\Phi(q+r)y}\Bigg|\frac{W_{\Phi(q+r)}(u+\varepsilon-y)-W_{\Phi(q+r)}(u-y)}{\varepsilon}\Bigg|
\end{align*}
 is bounded in $\varepsilon > 0$ by a function integrable over $(-\infty, -M)$ for some $- M < b \wedge x$.
Therefore dominated convergence gives
\begin{align}\label{der-int_5}
\frac{\partial}{\partial x}\int_{-\infty}^{-M}f(y)\Theta^{(q+r)}(x-b, y-b)\diff y&
=\lim_{\varepsilon\downarrow 0}\int_{-\infty}^{-M}f(y)\frac{\Theta^{(q+r)}(x-b+\varepsilon,y-b)-\Theta^{(q+r)}(x-b,y-b)}{\varepsilon}\diff y\notag\\
&=\int_{-\infty}^{-M}f(y)\frac{\partial}{\partial x}\Theta^{(q+r)}(x-b, y-b)\diff y.
\end{align}

(ii) Fix $x > b$ and consider the second term of $\Upsilon$ in \eqref{def_Upsilon}.
We take $\delta>0$ small enough so that $x - b - \delta > 0$.
By Fubini's theorem
\begin{align*}
\int_{-\infty}^{-M}f(y)&\int_0^{x-b}W^{(q)}(x-b-z) \Theta^{(q+r)}(z,y-b)\diff z\diff y\\&=\int_0^{x-b}W^{(q)}(x-b-z)\int_{-\infty}^{-M}f(y)\Theta^{(q+r)}(z,y-b)\diff y\diff z.
\end{align*}

On the other hand, by the mean value theorem 
and Lemma \ref{A1_new version}(i), for $0 < z < x-b - \delta$ 
and $0 < \varepsilon < \bar{\varepsilon}$,
\begin{multline*}
\Big|\frac{W^{(q)}(x-b-z+\varepsilon)-W^{(q)}(x-b-z)}{\varepsilon}\int_{-\infty}^{-M}f(y)\Theta^{(q+r)}(z,y-b)\diff y\Big|\\ \leq \sup_{u\in[\delta,x-b + \bar{\varepsilon}]}W^{(q)\prime}(u+)\sup_{z\in[0,x-b]} \Big|\int_{-\infty}^{-M}f(y)\Theta^{(q+r)}(z,y-b)\diff y \Big|<\infty.
\end{multline*}
This and  dominated convergence imply
\begin{multline*}
\lim_{\varepsilon\downarrow 0}\int_{0}^{x-b-\delta}\frac{W^{(q)}(x-b-z+\varepsilon)-W^{(q)}(x-b-z)}{\varepsilon}\int_{-\infty}^{-M}f(y)\Theta^{(q+r)}(z,y-b) \diff y \diff z\\
=\int_{0}^{x-b-\delta}W^{(q)\prime}(x-b-z)\int_{-\infty}^{-M}f(y)\Theta^{(q+r)}(z,y-b) \diff y \diff z.
\end{multline*}
On the other hand, 
\begin{align*}
\Big|\int_{x-b-\delta}^{x-b}&\frac{W^{(q)}(x-b+\varepsilon-z)-W^{(q)}(x-b-z)}{\varepsilon}\int_{-\infty}^{-M}f(y)\Theta^{(q+r)}(z,y-b)\diff y \diff z\Big|\\
&\leq \Big( \sup_{z\in[0,x-b]}\int_{-\infty}^{-M} | f(y)| |\Theta^{(q+r)}(z,y-b) |\diff y \Big) \int_{x-b-\delta}^{x-b}\frac{W^{(q)}(x-b+\varepsilon-u)-W^{(q)}(x-b-u)}{\varepsilon}\diff u,
\end{align*}
which vanishes as $\varepsilon \downarrow 0$ and then $\delta \downarrow 0$ because
l'Hopital's rule gives
\begin{align*}
\int_{x-b-\delta}^{x-b}\frac{W^{(q)}(x-b+\varepsilon-z)-W^{(q)}(x-b-z)}{\varepsilon}\diff z &= \frac{\overline{W}^{(q)}(\varepsilon+\delta)-\overline{W}^{(q)}(\delta)-\overline{W}^{(q)}(\varepsilon)}{\varepsilon}  \\ &\xrightarrow{\varepsilon \downarrow 0} W^{(q)}(\delta)-W^{(q)}(0).
\end{align*}
Putting the pieces together we obtain
\begin{align*}
A_1 := \lim_{\varepsilon\downarrow 0}\int_{0}^{x-b}&\frac{W^{(q)}(x-b+\varepsilon-z)-W^{(q)}(x-b-z)}{\varepsilon}\int_{-\infty}^{-M}f(y)\Theta^{(q+r)}(z,y-b)\diff y \diff z \\
&=\lim_{\delta\downarrow 0}\lim_{\varepsilon\downarrow 0}\int_{0}^{x-b-\delta}\frac{W^{(q)}(x-b+\varepsilon-z)-W^{(q)}(x-b-z)}{\varepsilon}\int_{-\infty}^{-M}f(y)\Theta^{(q+r)}(z,y-b)\diff y \diff z \\
&+\lim_{\delta\downarrow 0}\lim_{\varepsilon\downarrow 0}\int_{x-b-\delta}^{x-b}\frac{W^{(q)}(x-b+\varepsilon-z)-W^{(q)}(x-b-z)}{\varepsilon}\int_{-\infty}^{-M}f(y)\Theta^{(q+r)}(z,y-b)\diff y \diff z \\
&=\int_{0}^{x-b}W^{(q)\prime}(x-b-z)\int_{-\infty}^{-M}f(y)\Theta^{(q+r)}(z,y-b)\diff y \diff z.
\end{align*}
On the other hand, by \eqref{der-int_3} the mapping 
$z \mapsto\int_{-\infty}^{-M}f(y) \Theta^{(q+r)}(z,y-b)\diff y$
is continuous, and hence
\begin{multline*}
A_2 :=\lim_{\varepsilon\downarrow 0}\frac{1}{\varepsilon}\int_{x-b}^{x-b+\varepsilon}W^{(q)}(x-b+\varepsilon-z)\int_{-\infty}^{-M}f(y) \Theta^{(q+r)}(z,y-b)\diff y \diff z  \\ =W^{(q)}(0)\int_{-\infty}^{-M}f(y)\Theta^{(q+r)}(x-b,y-b) \diff y \diff z.
\end{multline*}
Therefore 
\begin{multline}\label{der-int_4}
\frac{\partial}{\partial x}  \int_{-\infty}^{-M}f(y) \int_0^{x-b}W^{(q)}(x-b-z) \Theta^{(q+r)}(z,y-b)\diff z \diff y
= A_1 + A_2 \\
=\int_{-\infty}^{-M}f(y)\frac{\partial}{\partial x}\int_0^{x-b}W^{(q)}(x-b-z)\Theta^{(q+r)}(z,y-b) \diff z \diff y. 
\end{multline}
We now conclude the proof by identities \eqref{der-int_5}, \eqref{der-int_4} and \eqref{def_Upsilon}.

\subsection{Proof of Lemma \ref{A3_der}} \label{proof_A3_der} 
We have
\begin{align*}
&
\frac{\partial}{\partial z}
H^{(q+r)}(z, \theta) \big|_{z=(b-y)+} 
=\theta  H^{(q+r)}(b-y, \theta)
-\frac{\kappa(\theta)-(q+r)}{\theta-\Phi(q+r)}
\underline{r}^{(q+r)}((b-y)+),
\end{align*}	
where $\underline{r}^{(q+r)}(x) := W^{(q+r)\prime}(x)-\Phi(q+r)W^{(q+r)}(x) > 0$, $x > 0$, with $ (q+r)\underline{r}^{(q+r)}(x) / \Phi(q+r)$ being the density function 
of  $-\underline{X}(\mathbf{e}_{q+r})$ 
as in \eqref{density_running_min},
and hence
\begin{align*}
\Big|
\frac{\partial}{\partial z} H^{(q+r)}(z, \theta) \big|_{z=(b-y)+}
\Big|
&\leq\theta
H^{(q+r)}(b-y, \theta)
+\left|\frac{\kappa(\theta)-(q+r)}{\theta-\Phi(q+r)}\right|
\underline{r}^{(q+r)}((b-y)+).
\end{align*}

Let us suppose that $b\in[b_1,b_2]$ with $b_1 >-M$. First, by \eqref{H_theta}, we have that
 $H^{(q+r)}(b-y, \theta)
\leq \E_{b_1-y} [e^{-(q+r)\tau_0^-} ]$.
On the other hand, using the fact that $x \mapsto W^{(q+r)\prime}(x+)/W^{(q+r)}(x)$ is decreasing as in Remark 3.1(3) of \cite{HPY}, the mapping $x \mapsto \underline{r}^{(q+r)}(x+)/W^{(q+r)}(x)$ is also decreasing.
Therefore
\begin{align*}
\underline{r}^{(q+r)}((b-y)+)
&\leq \frac{W^{(q+r)}(b-y)}{W^{(q+r)}(b_1-y)}
\underline{r}^{(q+r)}((b_1-y)+)
\leq\frac{W^{(q+r)}(b_2-y)}{W^{(q+r)}(b_1-y)}
\underline{r}^{(q+r)}((b_1 - y)+).
\end{align*}
Because $W^{(q+r)}(b_2-y)/W^{(q+r)}(b_1-y)$ converges as $y \rightarrow -\infty$ by Remark \ref{remark_smoothness_zero}(3),
  for $-M$ small enough, there exists a constant $K(b_1,b_2)$ dependent only on $b_1,b_2$ 
  such that 
${W^{(q+r)}(b_2-y)} / {W^{(q+r)}(b_1-y)}\leq K(b_1,b_2)$ for all $y\leq -M$. 
Hence
\begin{align*}
\Big|
\frac{\partial}{\partial z} H^{(q+r)}(z, \theta) \big|_{z=(b-y)+}\Big|
\leq \theta \E_{b_1-y}\left[e^{-(q+r)\tau_0^-}\right]  + \left|\frac{\kappa(\theta)-(q+r)}{\theta-\Phi(q+r)}\right| K(b_1,b_2) \underline{r}^{(q+r)}((b_1 - y)+).
\end{align*}
Here by Lemma \ref{A3_fin} and
the polynomial growth of $f$ as in Assumption \ref{assump_f}(i), 
$\int_{-\infty}^{-M}| f(y) |\E_{b_1-y} [e^{-(q+r)\tau_0^-} ] \diff y <\infty$. 
For the second term we have, by the density function of $-\underline{X}(\mathbf{e}_{q+r})$ as in \eqref{density_running_min},
\begin{align*}
\int_{-\infty}^{-M}|f(y)|&
\underline{r}^{(q+r)}(b_1 - y)
\diff y =\int_{b_1+M}^{\infty}| f(b_1-u)|
\underline{r}^{(q+r)}(u)
\diff u \leq \frac {\Phi(q+r)} {q+r} \E\left[|f(b_1+\underline{X}(\mathbf{e}_{q+r}))| \right]<\infty,
\end{align*}
where the finiteness holds as in the proof of Lemma \ref{A3_fin}.
Hence, by Corollary 5.9 in \cite{Bartle}, the derivative can be interchanged over the integral and the proof is complete.

\subsection{Proof of Lemma \ref{smoothness}}\label{App_B}
(i) In view of the expression of  Lemma \ref{v_prime_matches}, by monotone convergence (noting that $f'$ is monotone) and \eqref{U_bound}, $v_{b^*}'$ is continuous for all $x\in\R$.

Therefore, it just remains to show that $v_{b^*}''$ is continuous for the case $X$ has paths of unbounded variation, where $W^{(q+r)}(0)=W^{(q)}(0)=0$ by Remark \ref{remark_smoothness_zero}(2).

Using the expression of Lemma \ref{v_prime_matches} together with Theorem \ref{theorem_resolvents}, we obtain after differentiation that 
\begin{align*}
v_{b^*}''(x) &= \frac{\Phi(q) } {r}(\Phi(q+r)-\Phi(q))Z^{(q)}(x,\Phi(q+r))\int_{-\infty}^{\infty}f'(y)H^{(q+r)}(b-y,\Phi(q))\diff y\\
&-\int_{b^*}^xf'(y)W^{(q)\prime}(x-y)\diff y- \frac \partial {\partial x}\int_{-\infty}^{b^*} f'(y)\Upsilon(x-b^*,y-b^*)\diff y.
\end{align*}
Because $\Upsilon(x-b^*,y-b^*)$ is continuous for the case of unbounded variation and by Lemma \ref{A2}, 
\begin{align*}
	\frac \partial {\partial x} \int_{-\infty}^{b^*} f'(y)  \Upsilon(x-b^*,y-b^*)
	\diff y = \int_{-\infty}^{b^*} f'(y)  \frac \partial {\partial x}  \Upsilon(x-b^*,y-b^*)
	\diff y, \quad x \in \R. 
\end{align*}
Using \eqref{common_derivative}, we can write, for $x \neq y$,
\begin{align*}
\frac \partial {\partial x}  \Upsilon(x-b^*,y-b^*)=A(x,y,b^*)-r\int_{b^*}^xW^{(q)}(x-z)A(z,y,b^*) \diff z,
\end{align*}
where
\begin{align*}
A(x,y,b^*)
:= \left(W^{(q+r)\prime}(x-y)-\Phi(q+r)W^{(q+r)}(x-y)\right) -\Phi(q+r) \Theta^{(q+r)}(x-b^*, y-b^*).
\end{align*}
(i) 
Because $A(x,y,b^*) = W^{(q)\prime}(x-y)$ for $y > b^*$,
\begin{align*}
K(x,b^*) := \int_{b^*}^xf'(y)W^{(q)\prime}(x-y)\diff y + \int_{-\infty}^{b^*}f'(y)A(x,y,b^*)\diff y =\int_{-\infty}^{b^* \vee x}f'(y)A(x,y,b^*)\diff y.
\end{align*}
For $x\leq b^*$, recalling that $W^{(q+r)}(0)=0$ as in Remark \ref{remark_smoothness_zero}(2) for the case of unbounded variation,
\begin{align*}
K(x,b^*)
=\frac{\Phi(q+r)}{q+r}\E\left[f'(\underline{X}(\mathbf{e}_{q+r})+x)\right] -\Phi(q+r)\e_x\Big[\int_0^{\tau_{b^*}^+}e^{-(q+r)t}f'(X(t))\diff t \Big].
\end{align*}
Similarly, for $x > b^*$, by Remark \ref{remark_theta_sign}(ii), 
\begin{align*}
	K(x,b^*)
	&=\frac{\Phi(q+r)}{q+r} \E\left[f'(\underline{X}(\mathbf{e}_{q+r})+x)\right]+ \Phi(q+r)e^{\Phi(q+r)(x-b^*)}\e_{b^*}\Big[\int_0^{\tau_x^+}e^{-(q+r)t}f'(X(t))\diff t\Big].
\end{align*}
(1) The function  $x \mapsto \E\left[f'(\underline{X}(\mathbf{e}_{q+r})+x)\right]$ is continuous by monotone convergence in view of Assumption \ref{assump_f}(i).
(2) By Assumption \ref{assump_f}(i), for $\underline{x} \leq x \leq \overline{x}$, under $\p$, 
\begin{align*}
\int_0^{\tau_{b^*-x}^+}e^{-(q+r)t} |f'(X(t) +x)|\diff t \leq \int_0^\infty e^{-(q+r)t} ( |f'(X(t) + \underline{x})| + |f'(X(t) + \overline{x})| )\diff t 
\end{align*}
which are integrable by Remark \ref{remark_finiteness_resolvents}. Hence, by dominated convergence, $x \mapsto \e_x [\int_0^{\tau_{b^*}^+}e^{-(q+r)t}f'(X(t))\diff t ]$ is continuous.  
(3) The function $x \mapsto \e_{b^*} [\int_0^{\tau_x^+}e^{-(q+r)t}f'(X(t))\diff t ]$ is continuous by again dominated convergence because the absolute value of the integrand is dominated by $\int_0^\infty e^{-(q+r)t}|f'(X(t))|\diff t$. In sum, $K(x,b^*)$ is continuous in $x$.

(ii) For the case $x>b^*$, we have by Fubini's theorem that
\begin{align}\label{n_1}
\int_{-\infty}^{b^*}f'(y)\int_{b^*}^xW^{(q)}(x-z)A(z,y,b^*)\diff z\diff y=\int_{b^*}^xW^{(q)}(x-z)\int_{-\infty}^{b^*}f'(y)A(z,y,b^*)\diff y\diff z.
\end{align}
Here, for $\underline{x} \leq x \leq \overline{x}$, 
\begin{align*}
W^{(q)}&(x-z)\left|\int_{-\infty}^{b^*}f'(y)A(z,y,b^*)\diff y\right|\leq W^{(q)}(\overline{x})\frac{\Phi(q+r)}{q+r}\Bigg[\E\big[|f'(\underline{X}(\mathbf{e}_{q+r})+\underline{x})| + |f'(\underline{X}(\mathbf{e}_{q+r})+\overline{x})|\big]\\
&+(q+r)e^{\Phi(q+r)(\overline{x}-b^*)} \E\left[\int_0^\infty e^{-(q+r)t} \big( |f'(X(t) + \underline{x})|\ +  |f'(X(t) + \overline{x})| \big) \diff t \right]\Bigg].
\end{align*}
Hence, by bounded convergence, the term defined in \eqref{n_1} is also continuous in $x$. This concludes the proof of the continuity of $v''_{b^*}$.


\begin{thebibliography}{99}
	
	
	\bibitem{AI}\sc Albrecher, H., Ivanovs, J., and Zhou, X. \rm  Exit identities for L\'evy processes observed at Poisson arrival times. {\it Bernoulli} {\bf 22(3)}, 1364--1382, (2016).
	
	
	
	
	
	\bibitem{ATW}\sc Avanzi, B., Tu, V., and Wong, B. \rm  On optimal periodic dividend policies in the dual model with diffusion. {\it Insur. Math. Econ.} {\bf 55}, 210-224, (2014). 
	
	\bibitem{APP2007}\sc Avram, F., Palmowski, Z., and Pistorius, M.R. \rm  On the optimal dividend problem for a spectrally negative L\'evy process. {\it Ann. Appl. Probab.} {\bf 17}, 156-180, (2007).
	
	\bibitem{APY}\sc Avram, F., P\'erez, J.L., and Yamazaki, K. \rm  Spectrally negative {L}\'evy processes with Parisian reflection below and classical reflection above. {\it Stochastic Process. Appl.} {\bf 128(1)}, 255--290, (2018).
	
	
	
	
		\bibitem{Bartle} \sc Bartle, R. G. {\it The elements of integration.} \rm John Wiley \& Sons,
		New York, (1966).
	\bibitem{Bensoussan_book}\sc Bensoussan, A. {\it Dynamic programming and inventory control.} \rm IOS Press,  Amsterdam  (2011).
	
	

	
	\bibitem{BLS}\sc Bensoussan, A., Liu, R.H., and Sethi, S.P. \rm Optimality of an (s,S) policy with compound Poisson and diffusion demands: A quasi-variational inequalities approach. {\it SIAM J. Control Optim.} {\bf 44(5)}, 1650-1676, (2005).
	
		\bibitem{Egami_Yamazaki} Egami, M. and Yamazaki, K.: Phase-type fitting of scale functions for spectrally negative {L}\'evy processes. J. Comput. Appl. Math. {\bf 264}, 1-22, (2014).
	

	
		\bibitem{Carr}
	\sc Carr, P. \rm Randomization and the American put. {\it 	Rev. Financ. Stud.} {\bf 11(3)}, 597--626, (1998).
	
	\bibitem{Chan2011}
	\sc Chan, T., Kyprianou, A.E., and Savov, M. \rm Smoothness of scale functions for spectrally negative \lev processes. {\it Probab. Theory Relat. Fields} {\bf 150}, 691-708, (2011).
	
	
	
	
	\bibitem{HPY}\sc Hern\'andez-Hern\'andez, D., P\'erez, J.L., and Yamazaki, K. \rm Optimality of Refraction Strageties for Spectrally Negative {L}\'evy Processes.  {\it SIAM J. Control Optim.} {\bf 54 (3)}, 1126-1156, (2016).
	
	
	
	
	
	
	
	
	
	
	
	
	\bibitem{KKR}\sc  Kuznetsov, A., Kyprianou, A.E., and Rivero, V. \rm The
	theory of scale functions for spectrally negative L\'evy processes. {\it L\'evy Matters II, Springer Lecture Notes in Mathematics}, (2013).
	
	
	
	
	
	
	
	
	\bibitem{K} \sc Kyprianou, A.E. {\it Introductory lectures on
		fluctuations of L\'evy processes with applications.} \rm Springer,
	Berlin, (2006).
	
	
	
	
	
	
	
	
	
	
	\bibitem{loeffen}\sc Loeffen, R. L.,  \rm
	An optimal dividends problem with transaction costs for spectrally negative
	L\'evy processes. {\it Insur. Math. Econ.}, {\bf 45}, 41--48, (2009).
	
	\bibitem{LRZ}{\sc Loeffen, R. L., Renaud, J.-F. and Zhou, X.} Occupation times of intervals until first passage times for spectrally negative L\'evy processes with applications.  {\em Stochastic Process. Appl.}, {\bf 124} (3), 1408--1435, (2014).
	
	\bibitem{NobPerYamYan}
\sc Noba, K., P\'erez, J.L., Yamazaki, K. and Yano, K. 
\rm On optimal periodic dividend policies for L\'evy risk processes. {\it Insur. Math. Econ.}, {\bf 80}, 29--44, (2018). 


\bibitem{PerYam}
\sc P\'erez, J.L. and Yamazaki, K. 
\rm On the optimality of periodic barrier policies for a spectrally positive L\'evy processes.  
{\it Insur. Math. Econ.}, {77}, 1--13, (2017). 

\bibitem{PerYam2}
\sc P\'erez, J.L. and Yamazaki, K. 
\rm Mixed periodic-classical barrier policies for L\'evy risk processes. 
{\it Risks}, {6} (2), 33, (2018). 

	

	\bibitem{YP_singular_periodic}\sc P\'erez, J.L.\ and Yamazaki, K. \rm Optimality of hybrid continuous and periodic barrier policies in the dual model.  {\it Appl. Math. Optim.}, (2018).
	
	
	
	
	
	\bibitem{leung2015analytic} \sc Leung, T., Yamazaki, K., and Zhang, H. \rm  An analytic recursive method for optimal multiple stopping: Canadization and phase-type fitting.  {\it  Int. J.
		Theor. Appl. Finance} {\bf 18(5)},  1550032, (2015).
	
	
		
		\bibitem{Y}\sc Yamazaki, K. \rm Inventory control for spectrally L\'evy processes.
				{\it Math. Oper. Res.} {\bf 42 (1)}, 212--237, (2017).
	
	\bibitem{protter} \sc Protter, P.  {\it Stochastic integration and
		differential equations. 2nd Edition}, Springer, Berlin, (2005).
	
	
	
	
	
	
	
	
	
\end{thebibliography}
\end{document}